\numberwithin{equation}{section}
\theoremstyle{plain}
\newtheorem{theorem}{Theorem}[section]
\newtheorem{corollary}[theorem]{Corollary}
\newtheorem{lemma}[theorem]{Lemma}
\newtheorem{proposition}[theorem]{Proposition}
\newtheorem{question}[theorem]{Question}
\theoremstyle{definition}
\newtheorem{definition}[theorem]{Definition}
\newtheorem{example}[theorem]{Example}
\theoremstyle{remark}
\newtheorem{remark}[theorem]{Remark}
\newcommand{\R}{\mathbb{R}}
\newcommand{\Q}{\mathbb{Q}}
\newcommand{\Z}{\mathbb{Z}}
\newcommand{\C}{\mathbb{C}}
\renewcommand{\H}{\mathbb{H}}
\newcommand{\F}{\mathbb{F}}
\newcommand{\D}{\mathbb{D}}
\newcommand{\zxz}[4]{\begin{pmatrix} #1 & #2 \\ #3 & #4 \end{pmatrix}}
\newcommand{\leg}[2]{\left( \frac{#1}{#2} \right)}
\newcommand{\kzxz}[4]{\left(\begin{smallmatrix} #1 & #2 \\ #3 & #4\end{smallmatrix}\right) }
\newcommand{\kabcd}{\kzxz{a}{b}{c}{d}}
\newcommand{\calH}{\mathcal{H}}
\newcommand{\calK}{\mathcal{K}}
\newcommand{\calL}{\mathcal{L}}
\newcommand{\calO}{\mathcal{O}}
\newcommand{\calV}{\mathcal{V}}
\newcommand{\frake}{\mathfrak e}
\newcommand{\eps}{\varepsilon}
\newcommand{\bs}{\backslash}
\newcommand{\Span}{\operatorname{span}}
\newcommand{\Sl}{\operatorname{SL}}
\newcommand{\Gl}{\operatorname{GL}}
\newcommand{\SL}{\operatorname{SL}}
\newcommand{\Symp}{\operatorname{Sp}}
\newcommand{\Mp}{\operatorname{Mp}}
\newcommand{\Orth}{\operatorname{O}}
\newcommand{\Hom}{\operatorname{Hom}}
\newcommand{\Aut}{\operatorname{Aut}}
\newcommand{\Mat}{\operatorname{Mat}}
\newcommand{\dv}{\operatorname{div}}
\newcommand{\sig}{\operatorname{sig}}
\newcommand{\Psp}{\operatorname{Pic}_{\text{\rm \scriptsize sp}}}
\newcommand{\Pic}{\operatorname{Pic}}
\newcommand{\Div}{\operatorname{Div}}
\newcommand{\cont}{\operatorname{cont}}
\newcommand{\ord}{\operatorname{ord}}
\newcommand{\rank}{\operatorname{rank}}
\begin{document}

\title[On the converse theorem for Borcherds products]{On the converse theorem for Borcherds products}

\author[Jan H.~Bruinier]{Jan
Hendrik Bruinier}
\dedicatory{To Eberhard Freitag}
\address{Fachbereich Mathematik,
Technische Universit\"at Darmstadt, Schlossgartenstrasse 7, D--64289
Darmstadt, Germany}
\email{bruinier@mathematik.tu-darmstadt.de}
\subjclass[2010]{11F55, 11G18, 14G35}

\thanks{The author is partially supported by DFG grant BR-2163/2-2.}

\date{\today}

\begin{abstract}
  We prove a new converse theorem for Borcherds' multiplicative theta
  lift which improves the previously known results. To this end we
  develop a newform theory for vector valued modular forms for the
  Weil representation, which might be of independent interest.  We
  also derive lower bounds for the ranks of the Picard groups and the
  spaces of holomorphic top degree differential forms of modular
  varieties associated to orthogonal groups.
\end{abstract}

\maketitle

\section{Introduction}
\label{sect:intro}

In his celebrated paper \cite{Bo2} R.~Borcherds constructed a lift
from vector valued weakly holomorphic elliptic modular forms of weight
$1-n/2$ to meromorphic modular forms on the orthogonal group
$\Orth(n,2)$ whose zeros and poles are supported on special divisors
% (also called Heegner divisors)
and which possess infinite product expansions analogous to the Dedekind eta function.

Conversely, we prove in the present paper that in a large class of
cases every meromorphic modular form on $\Orth(n,2)$
whose divisor is supported on special divisors is the Borcherds lift
of a weakly holomorphic modular form of weight $1-n/2$.

%The main goal of the present paper is to prove in a large class of
%cases that conversely every meromorphic modular form on $\Orth(n,2)$
%whose divisor is supported on special divisors is the Borcherds lift
%of a weakly holomorphic modular form of weight $1-n/2$.

%It is a natural question to ask for a `converse theorem' for the lift:
%(see \cite{Bo1}, Problem 10 in Section 17 and \cite{Bo2} Problem 16.10):
%Is every meromorphic modular form on $\Orth(n,2)$ whose
%divisor is supported on special divisors the Borcherds lift of a
%weakly holomorphic modular form?  The main goal of the present paper
%is to answer this question in the affirmative in a large class of
%cases, improving the previously known results.
%To this end we develop
%a newform theory for vector valued modular forms, which might be of
%independent interest.
%As an application we obtain lower bounds for the ranks of
%the Picard groups of modular varieties associated to orthogonal
%groups.

%We now describe the results of the present paper in more detail.
Let $(V,Q)$ be a quadratic space over $\Q$ of signature $(n,2)$, and
let $\Orth(V)$ be its orthogonal group.  We realize the corresponding
hermitian symmetric space as the Grassmannian $\D$ of negative
definite oriented subspaces $z\subset V(\R)$ of dimension $2$.  
It has two connected components corresponding to the two possible
choices of an orientation.  
We fix one component and denote it by
$\D^+$.  The real orthogonal group $\Orth(V)(\R)$ acts transitively on
$\D$, and
%$\Orth(V)(\R)$ acts transitively on $\D$.
the subgroup $\Orth(V)(\R)^+$ of elements whose spinor norm has the
same sign as the determinant preserves $\D^+$.
%A subgroup $\Orth(V)(\R)^+$  of the real orthogonal group $\Orth(V)(\R)$ of index $2$
%(the subgroup of elements $\Orth(V)(\R)$ whose spinor norm has the same sign as the determinant)

Let $L\subset V$ be an even lattice, and let $L'$ be its dual. If $N$
is a non-zero integer, we write $L(N)$ for the lattice $L$ as a
$\Z$-module but equipped with the rescaled quadratic form
$NQ(\cdot)$. In addition, we briefly write $L^-=L(-1)$.
%We have $L(N)'=\frac{1}{N}L'$.
The quadratic form $Q$ on $L$ induces a $\Q/\Z$ valued quadratic form
on the discriminant group $L'/L$.
%We write $A^-$ for the discriminant form given by $A$ as an abelian group, but equipped with the quadratic form $-Q$.
We denote by $\Orth(L)$ the orthogonal group of $L$ and put $\Orth(L)^+=\Orth(L)\cap \Orth(V)(\R)^+$. % This is an arithmetic subgroup of $\Orth(V)(\R)$ which acts on $\D^+$ with finite covolume.
The kernel $\Gamma=\Gamma(L)$ of the natural map
$\Orth(L)^+\to\Aut(L'/L)$
is called the {\em discriminant kernel\/} subgroup of $\Orth(L)^+$.
%Throughout we let $\Gamma\subset \Orth(L)^+$ be a normal subgroup, which is contained in $\Gamma(L)$.
We consider the modular variety
\[
X_\Gamma = \Gamma\bs \D^+.
\]
By the theory of Baily-Borel, it carries the structure of a quasi-projective algebraic variety.
%----
For any $m\in \Q_{>0}$ and any $\mu\in L'/L$ there is a special
divisor $Z(m,\mu)$ on $X_\Gamma$ (also called Heegner divisor or
rational quadratic divisor), defined by the sum of the orthogonal
complements in $\D^+$ of vectors of norm $m$ in $L+\mu$, see Section
\ref{sect:4}.
%\eqref{eq:heeg}.

We realize the metaplectic group $\Mp_2(\Z)$ as the the non-trivial
twofold central extension of $\Sl_2(\Z)$ given by the two possible
choices of a holomorphic square root of the usual automorphy factor.
 %$c\tau+d$ for $\kabcd\in \Sl_2(\Z)$ and $\tau \in \H$.
Associated to the finite quadratic module $L'/L$, there exists a Weil
representation $\rho_L$ of $\Mp_2(\Z)$ on the group ring $\C[L'/L]$,
see Section \ref{sect:2.1}. The dual of $\rho_L$ can be identified
with $\rho_{L^-}$.  We denote by $M_{k,L^-}^!$ the space of  weakly
holomorphic modular forms for the group $\Mp_2(\Z)$ of weight $k$ and
representation $\rho_{L^-}$. Any $f\in M^!_{k,L^-}$ has a Fourier
expansion of the form
\begin{align*}
%\label{eq:few}
f(\tau)= \sum_{\mu\in L'/L}\sum_{m\in \Z-Q(\mu)}c(m,\mu)q^m\frake_\mu,
\end{align*}
where $q=e^{2\pi i\tau}$ for $\tau \in \H$, and $\frake_\mu$ denotes
the element of $\C[L'/L]$ given by the function $L'/L\to \C$ which is
$1$ on $\mu$ and $0$ for all $\nu\neq \mu$.  The main properties of
the Borcherds lift are summarized by the following theorem, see
Theorem 13.3 in \cite{Bo2}.

\begin{theorem}[Borcherds]
\label{thm:bo}
Let $f\in M_{1-n/2,L^-}^!$ be a weakly holomorphic modular form with Fourier coefficients $c(m,\mu)$ as above.
%in \eqref{eq:f.
% for $h\in A$ and $m\in \Z-Q(h)$
Assume that $c(m,\mu)\in \Z$ when $m<0$.
Then there exists a meromorphic modular form $\Psi(z,f)$ for the group $\Gamma$ with a unitary
multiplier system of finite order such that:
\begin{enumerate}
\item[(i)] The weight of $\Psi(z,f)$ is equal to $c(0,0)/2$.
\item[(ii)] The divisor of $\Psi(z,f)$ is given by
\[
Z(f)=\frac{1}{2}\sum_{\mu\in L'/L}\sum_{m>0} c(-m,\mu) Z(m,\mu).
\]
\item[(iii)]  $\Psi(z,f)$ has a particular infinite product expansion.
\end{enumerate}
\end{theorem}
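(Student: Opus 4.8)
The plan is to construct $\Psi(z,f)$ as the exponential of Borcherds' regularized theta lift and to read off the three assertions from the Fourier expansion of that lift in a neighbourhood of a cusp.

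\textbf{Setting up the lift.} Let $\Theta_L(\tau,v)$ denote the Siegel theta function attached to $L$, the $\C[L'/L]$-valued function that is non-holomorphic of weight $(n/2,1)$ and type $\rho_L$ in $\tau\in\H$ and $\Orth(L)^+$-invariant in $v\in\D^+$. Pairing it with $f\in M^!_{1-n/2,L^-}$ (whose type $\rho_{L^-}$ is dual to $\rho_L$) yields an $\Sl_2(\Z)$-invariant integrand, and I would form
\[
\Phi(v,f)=\int_{\calF}^{\mathrm{reg}}\big\langle f(\tau),\overline{\Theta_L(\tau,v)}\big\rangle\,d\mu(\tau),
\]
where $\calF$ is a fundamental domain for $\Sl_2(\Z)$, $d\mu$ the invariant measure, and $\int^{\mathrm{reg}}$ is the Harvey--Moore/Borcherds regularization (integrate over the truncated domain $\calF_T=\calF\cap\{\Im\tau\le T\}$ and discard the divergent part as $T\to\infty$), which is needed because the principal part of $f$ grows exponentially at the cusp. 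The function $\Phi(\cdot,f)$ is real-analytic and $\Orth(L)^+$-invariant away from the special divisors. Its singularities are found by isolating, near a generic point of $Z(m,\mu)$, the finitely many theta terms indexed by vectors $\lambda\in L+\mu$ with $Q(\lambda)=m$ that become orthogonal to $v$; each produces a logarithmic divergence, and the total singularity is $-\sum_{m>0}\sum_\mu c(-m,\mu)\log|\,\cdot\,|$ along $Z(m,\mu)$, which already encodes the divisor~(ii).

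\textbf{The Fourier expansion near a cusp.} This is the computational heart. I would fix a primitive isotropic $z\in L$ and a dual vector $z'$, splitting off a hyperbolic plane so that (after passing to a suitable finite-index sublattice) $L$ decomposes with a complementary lattice $K$ of signature $(n-1,1)$, and pass to the associated tube-domain model with coordinate $w=w(v)$. Applying partial Poisson summation in the $z$-direction rewrites $\Theta_L$ in terms of $\Theta_K$, after which the regularized integral over $\calF$ can be unfolded against the stabilizer of the cusp and reduced to an integral over a strip. Evaluating the $\tau$-integral term by term (a delicate computation involving incomplete Gamma and Bessel factors, with the regularization absorbing the $\lambda=0$ divergence) produces a Fourier expansion whose non-constant modes, indexed by $\lambda\in K'$ in a fixed Weyl chamber $W$, reassemble through the identity $\sum_{N\ge1}N^{-1}e(N\langle\lambda,w\rangle)=-\log(1-e(\langle\lambda,w\rangle))$ into the logarithm of the infinite product
\[
\Psi(v,f)=e\big(\langle\rho,w\rangle\big)\prod_{\substack{\lambda\in K'\\ \langle\lambda,W\rangle>0}}\big(1-e(\langle\lambda,w\rangle)\big)^{c(Q(\lambda),\lambda)},
\]
with $e(x)=e^{2\pi ix}$ and $\rho=\rho(W,f)$ the Weyl vector; the constant-in-$\Re w$ mode supplies the linear Weyl term $\langle\rho,w\rangle$ together with a $-c(0,0)\log|Y|$ contribution governing the weight.

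\textbf{Conclusion.} Combining the two inputs gives, with the standard normalization, an identity of the shape $\Phi(v,f)=-4\log\|\Psi(v,f)\|_{\Pet}+\mathrm{const}$, exhibiting $\Phi$ as $-4\log$ of the Petersson norm of a function holomorphic away from its zeros. Since $\Phi$ is $\Gamma$-invariant, $\Psi$ must transform as a meromorphic modular form for $\Gamma$ with a unitary multiplier system, and the multiplier has finite order because the relevant coefficients $c(-m,\mu)$ are integral. The power of the norm factor forces the weight to equal $c(0,0)/2$, giving~(i); the singularities computed above give the divisor $Z(f)$, giving~(ii); and the product is~(iii) by construction.

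\textbf{Main obstacle.} The decisive difficulty is the regularized unfolding of the middle step: one must justify interchanging the limit, the theta sum, and the $\tau$-integral; evaluate the resulting transcendental integrals precisely enough to recover the Weyl vector and the exact exponents $c(Q(\lambda),\lambda)$; and, crucially, control the Weyl-chamber combinatorics—establishing convergence of the product in a neighbourhood of $W$ and showing that the products attached to adjacent chambers glue across the walls to a single meromorphic $\Psi$. This gluing is exactly what converts the local analytic identity into the global $\Orth(L)$-modularity of $\Psi$.
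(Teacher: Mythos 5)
Your proposal follows what is, in effect, the paper's proof: Theorem~\ref{thm:bo} is not proved in this paper at all but quoted from \cite[Theorem 13.3]{Bo2}, and the argument you sketch --- the regularized theta integral $\Phi(z,f)$, its logarithmic singularities along the divisors $Z(m,\mu)$, and the Fourier expansion after splitting off a hyperbolic plane, which reassembles into the infinite product --- is exactly Borcherds' singular-theta-lift proof. It also matches the partial reconstruction the paper itself gives in Section~\ref{sect:5}, where $\tfrac{1}{2}\Phi(z,f)$ is identified as a logarithmic Green function for $Z(f)$ and $dd^c\Phi(z,f)=c^+(0,0)\,\Omega$ for weakly holomorphic $f$; the paper remarks that this proves the theorem ``up to the infinite product expansion,'' which is precisely your division of labor between the singularity analysis (giving (i) and (ii)) and the unfolding computation (giving (iii)).

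One clause does need correction: the finite order of the multiplier system does not follow from the integrality of the coefficients $c(-m,\mu)$, as you assert. Integrality only guarantees that the exponents in the product are integers, hence that $\Psi(z,f)$ is single-valued and transforms under $\Gamma$ with a unitary multiplier system; whether that character has \emph{finite} order is a separate and deeper question (for $n\geq 3$ it can be deduced from the fact that every unitary character of the arithmetic group $\Gamma$ has finite order, and in low-dimensional cases it requires additional arguments --- indeed Borcherds raised the general finiteness as an open problem). Since the statement you are proving explicitly includes ``of finite order,'' a complete proof would have to supply this input rather than derive it from integrality.
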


%The lift is equivariant with respect to the actions of $\Orth(L)^+$ on
%$M_{1-n/2,A^-}^!$ and on meromorphic modular forms for
%$\Gamma$. Therefore, $\Psi(z,f)$ is actually modular for the
%stabilizer in $\Orth(L)^+$ of the weakly holomorphic modular form $f$.

In this paper we consider the question (asked by Borcherds in
\cite[Problem~10 in Section~17]{Bo1} and \cite[Problem~16.10]{Bo2})
whether there is a converse theorem for this result in the following
sense: Let $F$ be a meromorphic modular form for the group $\Gamma$
whose zeros and poles are supported on special divisors $Z(m,\mu)$ for
$\mu\in L'/L$ and $m\in \Q_{>0}$. Is there a weakly holomorphic form
$f\in M^!_{1-n/2,L^-}$ whose Borcherds lift $\Psi(z,f)$ is equal to
$F$ (up to a constant factor)?

\begin{comment}
\begin{question}
\label{qestion:1}
Let $F$ be a meromorphic modular
form for the group $\Gamma$ whose zeros and poles are supported on special divisors, that is,
\begin{align}
\label{eq:divf}
\dv(F)=\frac{1}{2}\sum_\mu\sum_{m>0} c(-m,\mu) Z(m,\mu).
\end{align}
%(where $c(n,h)=c(n,-h)$ without loss of generality).
Is there a weakly holomorphic form $f\in M_{1-n/2,A^-}$ whose Borcherds lift $\Psi(z,f)$
is equal to $F$ (up to a constant factor)?
\end{question}
\end{comment}

It is known that there are
counter examples for $n=1$, when $X_\Gamma$ is a curve.
For instance, if there is an elliptic curve $E/\Q$
of conductor $N$ whose $L$-function has an odd functional equation and
has order $\geq 3$ at the center $s=1$, then the Gross-Zagier formula
implies that the $E$-isotypical components of all Heegner divisors
$Z(m,\mu)$ on the modular curve $X_0(N)$ are torsion in the
Jacobian. Consequently, there are rational relations among the
$Z(m,\mu)$ which cannot be obtained as the Borcherds lift of a weakly
holomorphic modular form of weight $1/2$, see \cite[Section 8.3]{BO}.
If one considers a slight generalization of the above Heegner
divisors, allowing twists by genus characters of the corresponding CM
fields, then there are further counter examples related to Ramanujan's
mock theta functions, see \cite[Section 8.2]{BO}.

On the other hand, for large $n$ there are no known counter examples,
and there is the belief that a converse theorem might hold in this
case. Let $U$ be the two-dimensional even unimodular lattice of
signature $(1,1)$, realized as $\Z^2$ equipped with the quadratic form
$Q((x_1,x_2))=x_1x_2$.  Any lattice isomorphic to $U$ is called a
hyperbolic plane.  The best known result regarding the above question
states that a converse theorem holds if $L\cong D\oplus U\oplus U$ for
a positive definite even lattice $D$ of dimension $n-2$, see
\cite[Theorem~5.12]{Br1}.  This includes the special case when $L$ is
unimodular, for which alternative proofs are also given in
\cite{BrFr}, \cite{BF2}.  An analogous question for orthogonal groups
of signature $(n,1)$ was considered by Barnard in connection with
Lorentzian reflection groups \cite{Ba}.

In the present paper we prove the following stronger results:
% see Corollary \ref{thm:main}

\begin{theorem}
\label{thm:mainintro}
Assume that $L\cong D\oplus U(N)\oplus U$ for some positive definite even
lattice $D$ of dimension $n-2\geq 1$ and some positive integer $N$.
Then every meromorphic modular form $F$ with
respect to $\Gamma(L)$ whose divisor is a linear combination of special
divisors $Z(m,\mu)$
%as in \eqref{eq:divf}
%($c(\lambda)\in \Z$ with $c(\lambda)=c(-\lambda)$ and $c(\lambda)=0$ for all but finitely many $\lambda$),
is  (up to a non-zero constant factor)
the Borcherds lift $\Psi(z,f)$ of a weakly
holomorphic modular form $f\in M_{1-n/2,L^-}^{!}$.
% with integral principal part.
\end{theorem}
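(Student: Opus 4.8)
The plan is to reduce the statement to a single existence problem for a weakly holomorphic input form and then to dispatch that problem by combining Serre duality with the modularity of the generating series of special divisors. Concretely, I would first write the divisor of $F$ as
\[
\dv(F)=\tfrac{1}{2}\sum_{\mu\in L'/L}\sum_{m>0}c(-m,\mu)\,Z(m,\mu)
\]
for suitable numbers $c(-m,\mu)$, and aim to produce $f\in M^!_{1-n/2,L^-}$ whose principal part has exactly these coefficients. If such an $f$ exists, the theorem follows: since $n-2\geq 1$ the weight $1-n/2$ is negative, so there are no non-zero holomorphic forms of this weight and representation $\rho_{L^-}$, whence $f$ is \emph{uniquely} determined by its principal part and its constant term $c(0,0)$ is fixed. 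By Theorem \ref{thm:bo}, $\Psi(z,f)$ then has divisor $\dv(F)$ and weight $c(0,0)/2$. Passing to a smooth compactification, the Hodge class $\lambda\in\Pic(X_\Gamma)_\C$ is non-torsion and the divisor of any weight-$r$ meromorphic modular form is linearly equivalent to $r\lambda$; comparing $F$ and $\Psi(z,f)$ forces equal weights, so $F/\Psi(z,f)$ is a weight-$0$ meromorphic modular form with trivial divisor and finite-order multiplier, hence a non-zero constant.

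The existence of $f$ is governed by duality. By Borcherds' obstruction principle (a form of Serre duality for the Weil representation), a principal part with coefficients $c(-m,\mu)$ is realized by some $f\in M^!_{1-n/2,L^-}$ if and only if
\[
\sum_{\mu\in L'/L}\sum_{m>0}c(-m,\mu)\,b(m,\mu)=0
\]
for every cusp form $g=\sum_{\mu}\sum_{m>0}b(m,\mu)q^m\frake_\mu$ in the space $S_{1+n/2,L}$ of cusp forms of weight $1+n/2$ and representation $\rho_L$. Thus the entire theorem reduces to showing that this coefficient pairing vanishes on all cusp forms, for the particular $c$ that comes from an honest divisor $\dv(F)$.

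For the crux I would use that the generating series of special divisors,
\[
\Phi(\tau)=-\lambda\,\frake_0+\sum_{\mu}\sum_{m>0}[Z(m,\mu)]\,q^m\frake_\mu,
\]
is a holomorphic modular form of weight $1+n/2$ and representation $\rho_L$ with values in $\Pic(X_\Gamma)_\C$. For any functional $\alpha\in\Pic(X_\Gamma)_\C^{\vee}$ the series $\phi_\alpha:=\alpha\circ\Phi$ lies in $M_{1+n/2,L}$, has positive coefficients $\alpha([Z(m,\mu)])$ and $(0,0)$-coefficient $-\alpha(\lambda)$. Since $\dv(F)$ is the divisor of a modular form, one has $\sum_{m>0,\mu}c(-m,\mu)[Z(m,\mu)]=2r\,\lambda$ in $\Pic(X_\Gamma)_\C$ for the weight $r$; applying $\alpha$ yields the identity
\[
\sum_{\mu}\sum_{m>0}c(-m,\mu)\,a_{\phi_\alpha}(m,\mu)=-2r\,a_{\phi_\alpha}(0,0),
\]
valid for every element $\phi_\alpha$ of the span $W\subseteq M_{1+n/2,L}$ of the coefficients of $\Phi$. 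If one knows that $S_{1+n/2,L}\subseteq W$, then for a cusp form $g$ the right-hand side vanishes (as $a_g(0,0)=0$), so the pairing against $c$ is $0$, which is exactly the required obstruction vanishing. The inclusion $S_{1+n/2,L}\subseteq W$ is equivalent, by transposing, to the injectivity of the cuspidal theta lift $g\mapsto\Lambda(g)=\langle\Phi,g\rangle_{\Pet}\in\Pic(X_\Gamma)_\C$.

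The main obstacle is therefore to prove this injectivity under the hypothesis $L\cong D\oplus U(N)\oplus U$. This is precisely the point that fails for $n=1$, where the Heegner points attached to an elliptic curve of analytic rank $\geq 3$ are torsion and the lift annihilates the corresponding weight-$3/2$ cusp form (cf.\ \cite{BO}); for $n\geq 3$ no such arithmetic degeneration should occur. The splitting of the unimodular hyperbolic plane $U$ yields the clean Heegner-divisor description and makes the Fourier expansions of theta lifts computable, as in the known case $L\cong D\oplus U\oplus U$ of \cite{Br1}; the scaled plane $U(N)$ introduces a level-$N$ structure on $\rho_L$, which is the genuinely new difficulty. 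To handle it I would develop a newform theory for vector valued modular forms for $\rho_L$, decomposing $S_{1+n/2,L}$ into oldforms induced from divisors of $N$ and primitive new forms. Injectivity of the lift on the new part is then reduced to a non-vanishing statement for Fourier coefficients (equivalently, for the associated $L$-values), while the oldform contributions are treated inductively by passing to sublattices of smaller level. Constructing this newform theory and establishing the attendant non-vanishing is where essentially all of the work lies.
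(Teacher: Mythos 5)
Your overall architecture -- reduce the theorem to injectivity of the cuspidal theta lift $\Lambda$, and expect a newform theory for the Weil representation to handle the level structure coming from $U(N)$ -- matches the paper's strategy. But there are two problems, one repairable and one fatal. The repairable one is in your reduction: the implication ``injectivity of $\Lambda$ $\Rightarrow$ $S_{1+n/2,L}\subseteq W$'' is not a formal transposition. Injectivity only gives that the Petersson projections to $S_{1+n/2,L}$ of the forms $\phi_\alpha$ span $S_{1+n/2,L}$; since $W$ also contains the non-cuspidal forms $\phi_\alpha$ with $\alpha(\lambda)\neq 0$, one can a priori have these projections span $S_{1+n/2,L}$ while $W\cap S_{1+n/2,L}=\{\phi_\alpha:\alpha(\lambda)=0\}$ has codimension one in $S_{1+n/2,L}$, in which case your coefficient-pairing argument does not reach every cusp form. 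The paper's Theorem \ref{lift+} closes exactly this gap by working with harmonic Maass forms: given $F$ (normalized to weight $0$), one produces a harmonic $f\in H_{2-\kappa,L^-}$ with $\Phi(z,f)=-4\log|F|$ up to a constant, so that $\Lambda(\xi(f))=dd^c\Phi(z,f)=0$; injectivity then forces $\xi(f)=0$, i.e.\ $f$ is weakly holomorphic, with no appeal to the inclusion $S_{1+n/2,L}\subseteq W$.

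The fatal problem is that the injectivity of $\Lambda$ for $L\cong D\oplus U(N)\oplus U$ -- which you yourself identify as ``where essentially all of the work lies'' -- is not proved, and your sketch points in a direction that does not work. Reducing injectivity on the ``new part'' to non-vanishing of Fourier coefficients, ``equivalently, for the associated $L$-values,'' is precisely the kind of input that can fail: the $n=1$ counterexamples in \cite{BO} arise exactly from vanishing of central $L$-values/derivatives, and no such non-vanishing enters the paper. Instead, the proof of Theorem \ref{thm:main} runs as follows: (a) from the explicit Fourier expansion of $\Lambda(z,g)$ in the tube domain attached to the unimodular summand $U$ (Theorem \ref{thm:lift}(i)), $\Lambda(g)=0$ forces $b(Q(\lambda),\lambda)=0$ for every $\lambda\in K'$ of positive norm, where $K=D\oplus U(N)$; (b) the $(\Z/N\Z)^\times$-action on $L'/L$ realized inside $\Orth(L)^+/\Gamma$ (Lemma \ref{lem:act}), together with the equivariance of $\Lambda$, reduces to a single residue class; (c) the newform theory (Theorems \ref{thm:6} and \ref{thm:8}) is used not to isolate newforms and invoke non-vanishing, but to show that a kernel element supported on $\bigcup_{d\mid N} I_d^\perp$ is induced from smaller discriminant forms and hence has components periodic under translation by $I_d$, which is exactly what allows the vanishing from (a) to propagate to \emph{all} components $g_\mu$ by induction on $\Omega(\cont_e(\mu))$. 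Without an argument of this combinatorial/Fourier-theoretic kind, your proposal does not prove the theorem.
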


% see Corollary \ref{thm:main}
\begin{corollary}
\label{cor:main2intro}
Assume that $L\cong K\oplus U$ for some isotropic even
lattice $K$ of signature $(n-1,1)$ with $n\geq 3$.
Then there exists a sublattice  $K_0\subset K$ of the same level as $K$
such that
every meromorphic modular form $F$ with
respect to $\Gamma(L)$ whose divisor is a linear combination of special
divisors $Z(m,\mu)$
is  (up to a non-zero constant factor)
the Borcherds lift $\Psi(z,f)$ of some $f\in M_{1-n/2,K_0^-}^{!}$.
%h with integral principal part.
\end{corollary}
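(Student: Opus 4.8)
The plan is to deduce this from Theorem~\ref{thm:mainintro} by passing to a carefully chosen finite-index sublattice of $K$ and transporting the converse theorem along the resulting covering of modular varieties. Since $K$ is isotropic, the first step is a purely lattice-theoretic one: construct a sublattice $K_0\subseteq K$ of the same level as $K$ with $K_0\cong D\oplus U(N)$ for a positive definite even lattice $D$ of dimension $n-2$ and a positive integer $N$. Setting $L_0=K_0\oplus U\cong D\oplus U(N)\oplus U$ then produces a finite-index sublattice $L_0\subseteq L$ spanning the same quadratic space $V$, and since $n-2\geq 1$ the lattice $L_0$ satisfies the hypotheses of Theorem~\ref{thm:mainintro}.

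To construct $K_0$, I would pick a primitive isotropic vector $e\in K$ and a companion $f'\in K$ with $\langle e,f'\rangle=N$, where $N\Z=\langle e,K\rangle$. After subtracting a suitable integral multiple of $e$ from $f'$ and, if necessary, passing to a finite-index sublattice, one splits off a rank-two sublattice isometric to $U(N)$, whose orthogonal complement is a positive definite even lattice $D$ of rank $n-2$. The delicate point, and the main obstacle, is to carry this out so that the level is \emph{not} enlarged: passing to a sublattice can only increase the level, so one must choose $e$, its companion, and the splitting so that $N$ divides the level of $K$ and the discriminant form of $K_0\cong D\oplus U(N)$ has the same exponent as that of $K$. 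This is where the isotropy of $K$ and the flexibility in the choice of $e$ are used, and it is the only genuinely new ingredient beyond Theorem~\ref{thm:mainintro}.

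With $K_0$ and $L_0$ in hand, I would transport the given form $F$. An isometry in $\Gamma(L_0)$ fixes $L_0'/L_0$ pointwise, hence preserves $L$ and acts trivially on $L'/L$; thus $\Gamma(L_0)\subseteq\Gamma(L)$ and there is a finite covering $\pi\colon X_{\Gamma(L_0)}\to X_{\Gamma(L)}$. The pullback $F_0=\pi^*F$ is a meromorphic modular form for $\Gamma(L_0)$, and because a special divisor $Z(m,\mu)$ is cut out by the orthogonal complements of the norm-$m$ vectors in the coset $L+\mu$, which decomposes into finitely many $L_0$-cosets, each $Z(m,\mu)$ pulls back to a linear combination of special divisors $Z(m,\mu_0)$ for $L_0$. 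Hence the divisor of $F_0$ is again a linear combination of special divisors, and Theorem~\ref{thm:mainintro} applied to $L_0$ yields a weakly holomorphic form $f\in M^!_{1-n/2,L_0^-}$ with $\Psi(z,f)=F_0$ up to a non-zero constant.

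Finally I would identify the source. Since $U$ is unimodular, $L_0'/L_0\cong K_0'/K_0$ as finite quadratic modules, so $\rho_{L_0}\cong\rho_{K_0}$ and likewise $\rho_{L_0^-}\cong\rho_{K_0^-}$; thus $f$ may be regarded as an element of $M^!_{1-n/2,K_0^-}$, and the Borcherds lift attached to it (computed with respect to $L_0$) coincides with $\Psi(z,f)$. As $F_0=\pi^*F$ is $\Gamma(L)$-invariant, so is $\Psi(z,f)$; it therefore descends to $X_{\Gamma(L)}$ and there equals $F$ up to a non-zero constant, which is the claim. The crux of the argument is thus the level-preserving splitting lemma of the second step; the remaining steps are formal transport along the covering $\pi$ together with the standard compatibility of the Weil representation and the Borcherds lift under adjoining a unimodular hyperbolic plane.
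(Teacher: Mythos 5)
Your overall strategy (find a level-preserving sublattice $K_0\subset K$ splitting a rescaled hyperbolic plane, then transport the problem to $L_0=K_0\oplus U$ and apply Theorem~\ref{thm:mainintro}) is the right one, and your transport steps are sound: $\Gamma(L_0)\subseteq\Gamma(L)$, each special divisor for $L$ decomposes into special divisors for $L_0$, and $M^!_{1-n/2,L_0^-}\cong M^!_{1-n/2,K_0^-}$ because $U$ is unimodular. But the proposal has a genuine gap exactly where you flag ``the main obstacle'': the level-preserving splitting is never actually constructed, and it is the entire mathematical content of the corollary --- everything else is formal. Worse, the sketch you give points in a direction that does not work. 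You take $N\Z=(e,K)$ and try to split $U(N)$ off from $K$ itself: to make the companion $f'$ isotropic you need $N\mid Q(f')$ (since $Q(f'-ae)=Q(f')-aN$), which need not hold; and even then an orthogonal splitting $K=(\Z e+\Z f'')\oplus D$ forces $(f'',K)\subseteq N\Z$, which also need not hold. Passing to a finite-index sublattice to force these conditions is precisely the step at which the level can grow, and you offer no mechanism to prevent that.

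The paper resolves this by shrinking $K$ in a different way: with $\ell\in K$ primitive isotropic, $(\ell,K)=N_\ell\Z$, and $N_K$ the level of $K$, it sets $K_0:=\{x\in K:\,(\ell,x)\in N_K\Z\}$, i.e.\ it \emph{enlarges the pairing ideal of $\ell$ to the full level} rather than trying to split off $U(N_\ell)$. The key computation is that the dual lattice is $K_0'=K'+\Z\frac{\ell}{N_K}$; since $\ell$ is isotropic and pairs integrally with $K'$, every $x=y+a\ell/N_K\in K_0'$ satisfies $N_K Q(x)=N_K Q(y)+a(y,\ell)\in\Z$, so passing to $K_0$ does \emph{not} enlarge the level. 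Now $\ell$ is primitive isotropic in $K_0$ with $(\ell,K_0)=N_K\Z$ equal to $(\text{level of } K_0)\cdot\Z$, which is exactly the hypothesis of Lemma~\ref{lem:6.1}: the level condition there is what puts the isotropic companion $\tilde\ell=N_K(\ell'-Q(\ell')\ell)$ inside the lattice and makes the projection onto $\Z\ell\oplus\Z\tilde\ell$ integral. Hence $K_0\cong D\oplus U(N_K)$ with $D$ positive definite of rank $n-2\geq 1$, and Theorem~\ref{thm:mainintro} applies to $K_0\oplus U$. Without this construction, or some substitute for it, your argument does not establish the corollary.
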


For even lattices of prime level, we are able to prove a converse theorem
without the hypothesis that $L$ splits a hyperbolic plane over $\Z$.

\begin{theorem}
\label{thm:main2intro}
Let $L$ be an even lattice of prime level $p$ and signature $(n,2)$. Assume that $n \geq 3$ and that the Witt rank of $L$ is $2$.
Then there exists a sublattice
$L_0\subset L$ of level $p$
such that
every meromorphic modular form $F$ with
respect to $\Gamma(L)$ whose divisor is a linear combination of special
divisors $Z(m,\mu)$
is  (up to a non-zero constant factor)
the Borcherds lift $\Psi(z,f)$ of some $f\in M_{1-n/2,L_0^-}^{!}$.
% with integral principal part.
\end{theorem}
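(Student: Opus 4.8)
The plan is to reduce the assertion to Corollary~\ref{cor:main2intro} (and, in a degenerate case, to Theorem~\ref{thm:mainintro}) by passing to a suitable sublattice of $L$ that splits a hyperbolic plane over $\Z$. The whole reduction rests on one elementary but decisive observation: if $L_0\subseteq L$ is a sublattice, then $\Gamma(L_0)\subseteq\Gamma(L)$. Indeed, from $L_0\subseteq L\subseteq L'\subseteq L_0'$ one sees that an isometry $g$ fixing $L_0'/L_0$ pointwise sends each $x\in L\subseteq L_0'$ to an element of $x+L_0\subseteq L$; hence $g$ preserves $L$ and acts trivially on $L'/L$, so $g\in\Gamma(L)$. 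Consequently every meromorphic modular form $F$ for $\Gamma(L)$ is automatically one for $\Gamma(L_0)$. Moreover, writing a coset $L+\mu$ as the disjoint union of the cosets $L_0+\nu$ it contains, one finds that the special divisor $Z(m,\mu)$ of $L$ equals the sum $\sum_\nu Z(m,\nu)$ of special divisors of $L_0$; hence the divisor of $F$, regarded on $X_{\Gamma(L_0)}$, is again a linear combination of special divisors attached to $L_0$.

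Granting this, it suffices to produce a sublattice $M\subseteq L$ of level $p$ that splits a hyperbolic plane, say $M\cong K\oplus U$ with $K$ even of signature $(n-1,1)$: for then, applying the observation with $L_0=M$, the form $F$ is a meromorphic modular form for $\Gamma(M)\subseteq\Gamma(L)$ whose divisor is supported on the special divisors of $M$. Since $M\otimes\Q=V$ has Witt rank $2$, the orthogonal complement $K$ has Witt rank $1$ and is therefore isotropic; and since $U$ is unimodular, $K$ has the same level $p$ as $M$. Hence Corollary~\ref{cor:main2intro} applies to $M$ and produces a sublattice $K_0\subseteq K$ of level $p$ together with a form $f\in M_{1-n/2,K_0^-}^{!}$ such that $F=c\,\Psi(z,f)$ for some constant $c\neq 0$. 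Putting $L_0=K_0\oplus U\subseteq M\subseteq L$ and using $\rho_{(K_0\oplus U)^-}\cong\rho_{K_0^-}$ (again because $U$ is unimodular) to regard $f$ as an element of $M_{1-n/2,L_0^-}^{!}$, we obtain a sublattice $L_0\subseteq L$ of level $p$ with the required property.

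The real work, and the main obstacle, is the lattice-theoretic construction of $M$, carried out by a local-to-global argument. At every prime $\ell\neq p$ the lattice $L$ is unimodular over $\Z_\ell$, and since $V$ is isotropic over $\Q_\ell$ (a consequence of Witt rank $2$), $L\otimes\Z_\ell$ already splits a hyperbolic plane. The delicate place is $p$: writing the Jordan decomposition $L\otimes\Z_p\cong A\perp pB$ with $A,B$ unimodular, one must decide whether a unimodular hyperbolic plane can be split off at $p$ while keeping the level exactly $p$. If the unimodular part $A$ is isotropic of rank $\geq 2$ this is immediate; otherwise one replaces $L\otimes\Z_p$ by a sublattice of $p$-power index realizing such a splitting, and then chooses $M\subseteq L$ of $p$-power index with this prescribed localization at $p$ and $M\otimes\Z_\ell=L\otimes\Z_\ell$ for $\ell\neq p$. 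Because $M$ is indefinite of rank $n+2\geq 5$ and splits $U$ locally everywhere, Eichler's theorem on the coincidence of genus and class for indefinite lattices of rank $\geq 3$ then guarantees that $M$ splits $U$ over $\Z$. I expect the genuinely resistant situation to be the one in which the isotropy of $L\otimes\Z_p$ is carried entirely by the $p$-modular part $pB$, so that no unimodular hyperbolic plane can be split off at $p$ without raising the level; there one instead splits $U(p)\oplus U$ and applies Theorem~\ref{thm:mainintro} with $N=p$, taking $L_0=M$ directly. Controlling this dichotomy, and checking throughout that the level remains equal to $p$, is the technical crux of the argument.
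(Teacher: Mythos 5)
There is a genuine gap, and it sits exactly where you flag the ``technical crux'': the sublattice you need does not exist in general, and your fallback for the resistant case fails for the same reason. Take $L=I\!I_{n,2}(p)$, the even unimodular lattice of signature $(n,2)$ rescaled by $p$ (so $n\equiv 2\pmod 8$, level $p$, Witt rank $2$); this satisfies the hypotheses of Theorem~\ref{thm:main2intro}. Every vector of $L$ has norm $pQ(x)\in p\Z$, hence so does every vector of every sublattice $M\subseteq L$. But any lattice isometric to $K\oplus U$, or to $D\oplus U(p)\oplus U$, contains a vector of norm $1$, since the unimodular plane $U$ represents $1$. So no sublattice of $L$ --- of any level, of any index --- splits a unimodular hyperbolic plane over $\Z$, and your fallback (split $U(p)\oplus U$ and apply Theorem~\ref{thm:mainintro} with $N=p$) is impossible for exactly the same reason. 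This is precisely the case where the Jordan decomposition $L\otimes\Z_p$ is entirely $p$-modular ($A=0$ in your notation); the obstruction is local at $p$ and no Eichler/genus-theory argument can remove it, because the required local splitting at $p$ itself does not exist. Your reduction (whose first step, $\Gamma(L_0)\subseteq\Gamma(L)$ together with the decomposition of special divisors, is correct and is also used by the paper) therefore proves the theorem only when $L\otimes\Z_p$ has an isotropic unimodular Jordan block, not in general.

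The failure is not an accident of the method: no reduction to Theorem~\ref{thm:mainintro} or Corollary~\ref{cor:main2intro} can work here. Indeed, Section~\ref{sect:5.2} of the paper (Proposition~\ref{prop:ker}) exhibits, for $L=I\!I_{n,2}(p)$, nonzero elements of $S_{\kappa,L}$ in the kernel of $\Lambda$, whereas for lattices splitting $U$ over $\Z$ the map $\Lambda$ is injective on all of $S_{\kappa,L}$ (Theorem~\ref{thm:main}); the two situations are analytically different, and the prime-level case genuinely requires working on the proper subspace $S^+_{\kappa,L}$. What the paper actually does is: pass (via Lemma~\ref{lem:6.1}) to a level-$p$ sublattice of the form $D\oplus U(p)\oplus U(p)$ --- two \emph{rescaled} planes, which is all one can split off --- and then prove directly that $\Lambda^+$ is injective on $S^+_{\kappa,L}$ (Theorem~\ref{thm:mainp}). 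That proof uses Eichler transformations $E(u,v)$ with $v\in L'$ (Lemma~\ref{lem:eichler}, Proposition~\ref{prop:keyp}), reduction to a normal form under $\Aut(L'/L)$ (Proposition~\ref{prop:normform}, Corollary~\ref{cor:inv0}), the defining relations of $S^+_{\kappa,L}$, and the Weil bound on coefficients of cusp forms; the necessary-and-sufficient criterion of Theorem~\ref{lift+} then yields the converse theorem. None of this machinery can be replaced by a sublattice reduction of the kind you propose.
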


To prove these results, we use a refinement of the approach of \cite{Br1}.
By extending the regularized theta lift of Borcherds to harmonic Maass forms,
one can construct a linear map
 \[
\Lambda: S_{1+n/2,L}\longrightarrow  \calH^{1,1}(X_\Gamma)
\]
from the space of cusp forms of weight $1+n/2$ with representation $\rho_L$
for the group $\Mp_2(\Z)$ to square integrable harmonic $(1,1)$ forms on
$X_\Gamma$, see Section \ref{sect:5}. In \cite{BF} it was shown that this map is adjoint
to the corresponding Kudla-Millson lift \cite{KM3}.
The converse theorem holds for meromorphic modular forms for $\Gamma$ if and only if
$\Lambda$ is injective on a certain subspace $S_{1+n/2,L}^+$ of $S_{1+n/2,L}$, see Theorem~\ref{lift+} and \cite[Theorem~5.11]{Br1}\footnote{Note that the definition of $S_{1+n/2,L}^+$ of the present paper differs from the definition given in \cite{Br1}. The space used in \cite{Br1} is ``too big'' in general
so that one only obtains a sufficient criterion.
But that difference can only occur for lattices that do not split a hyperbolic plane over $\Z$, which is why it did not play any role in that paper.}.
%If $L$ splits a hyperbolic plane over $\Z$, then $S_{1+n/2,L}^+=S_{1+n/2,L}$, but in general it is a true subspace

%To prove injectivity results for $\Lambda$, we employ its description in terms of Fourier expansions.
% given in \cite[Theorem 5.9]{Br1}.

Assume that $L$ is as in Theorem~\ref{thm:mainintro} and that $g\in \ker(\Lambda)$. Then,
since $L$ splits a hyperbolic plane over $\Z$, we have $S_{1+n/2,L}^+=S_{1+n/2,L}$.
By means of the description of $\Lambda$ in terms of Fourier expansions it can be deduced that certain Fourier coefficients of $g$ vanish. To show that {\em all\/} coefficients of $g$ vanish, we use a newform theory for vector valued modular forms for the Weil representation which we develop in Section~\ref{sect:3}.

The basic idea is as follows: If $H$ is a totally isotropic subgroup of the discriminant form $A=L'/L$, then $B:=H^\perp/H$ together with the induced quadratic form is also a discriminant form of size $|B|=|A|/|H|^2$.
There are intertwining operators for the Weil representations $\rho_A$ and $\rho_B$ which give rise to natural maps between $M_{k,B}$ and $M_{k,A}$ that are adjoint with respect to the Petersson inner product.
If $g\in M_{k,A}$ is in the image of the map from $M_{k,B}$, then $g$ is supported on $H^\perp$, that is, the components $g_\mu$ with $\mu\notin H^\perp$ vanish. Conversely, we show that any form in $M_{k,A}$ which is supported on $H^\perp$ must be in the image of the map.
More generally, we show that any element of $M_{k,A}$ which is supported on the union of orthogonal complements of isotropic subgroups $H_i\subset A$ must be a sum of forms induced from the corresponding smaller discriminant groups, see Theorems~\ref{thm:6} and \ref{thm:8}.

In Section \ref{sect:5.1} we employ this newform theory together with the equivariance of the map $\Lambda$ with respect to the action of the finite group $\Orth(L)^+/\Gamma$ to prove that $\Lambda$ is injective. Thereby we obtain
Theorem \ref{thm:mainintro}.

If $L$ does not split a hyperbolic plane over $\Z$, the map $\Lambda$ is not injective in general (see Section \ref{sect:5.2} for examples) and $S_{1+n/2,L}^+$ is a true subspace of $S_{1+n/2,L}$. Therefore, to prove Theorem \ref{thm:main2intro} we have to argue differently. First we employ the Fourier expansion of $\Lambda$
and the equivariance for the group $\Orth(L)^+/\Gamma$, to show that any $g\in S_{1+n/2,L}^+$ with $\Lambda(g)=0$ must be invariant under the action of $\Aut(L'/L)$. Then, using the defining relations of  $S_{1+n/2,L}^+$ and the Weil bound for the growth of Fourier coefficients of cusp forms, we infer that $g$ must actually vanish (Theorem \ref{thm:mainp}).

As an application of our injectivity results, we obtain information about the Picard groups of modular varieties. For instance, we shall prove the following result.

\begin{theorem}
\label{thm:rankestintro}
Assume that $L\cong D\oplus U(N)\oplus U$ for some positive definite even lattice $D$ of dimension $n-2>0$ and some positive integer $N$. Then the subgroup $\Psp(X_\Gamma)$ of the the Picard group of $X_\Gamma$ generated by the special divisors $Z(m,\mu)$ satisfies
\[
\rank(\Psp(X_\Gamma))= 1+ \dim(S_{1+n/2,L}).
\]
\end{theorem}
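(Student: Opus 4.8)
\emph{Proof plan.}
The plan is to reduce the computation of $\rank(\Psp(X_\Gamma))$ to a dimension count for vector valued modular forms of weight $1+n/2$, using the modularity of the generating series of special divisors together with the converse theorem (Theorem~\ref{thm:mainintro}). Throughout I work in the rational Picard group $\Pic_\C:=\Pic(X_\Gamma)\otimes\C$ of a suitable compactification, normalised so that the line bundle $\calL$ of modular forms of weight one has a class $c_1(\calL)\in\Pic_\C$ and every meromorphic modular form of weight $w$ has divisor class $w\,c_1(\calL)$. By Borcherds' theorem on the modularity of the generating series of Heegner divisors, the series
\[
A(\tau)=-c_1(\calL)\,\frake_0+\sum_{m>0,\,\mu}[Z(m,\mu)]\,q^m\frake_\mu
\]
is a modular form in $M_{1+n/2,L}$ with values in $\Pic_\C$. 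I would consider the linear map
\[
\Phi\colon \Pic_\C^\ast\lo M_{1+n/2,L},\qquad \ell\longmapsto \ell\circ A=\sum_{m\ge 0,\,\mu}\ell\big([Z(m,\mu)]\big)\,q^m\frake_\mu,
\]
and put $U:=\Phi(\Pic_\C^\ast)$. Since the kernel of $\Phi$ is the annihilator of the subspace spanned by the classes $[Z(m,\mu)]$ and $c_1(\calL)$, we get $\dim U=\dim\big(\Psp_\C+\C\,c_1(\calL)\big)$, where $\Psp_\C:=\Psp(X_\Gamma)\otimes\C$; in particular $\Psp_\C$ is finite dimensional. Moreover $\Phi(\ell)$ is a modular form of positive weight, hence cannot be a nonzero constant; applied to a functional $\ell$ vanishing on $\Psp_\C$ this forces $\ell(c_1(\calL))=0$, so that $c_1(\calL)\in\Psp_\C$. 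Consequently $\dim U=\rank(\Psp(X_\Gamma))$, and it remains to determine $\dim U$.

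Next I would identify $U$ explicitly. By the converse theorem (Theorem~\ref{thm:mainintro}) every relation $\sum_{m>0,\mu}a(m,\mu)[Z(m,\mu)]=w\,c_1(\calL)$ in $\Pic_\C$ comes from a Borcherds product: by Theorem~\ref{thm:bo} there is an $f=\sum c(m,\mu)q^m\frake_\mu\in M^!_{1-n/2,L^-}$ with $a(m,\mu)=\tfrac12 c(-m,\mu)$ and $w=\tfrac12 c(0,0)$. Hence a prescription $\ell([Z(m,\mu)])=b(m,\mu)$ for $m>0$ and $\ell(c_1(\calL))=-b(0,0)$ defines a functional $\ell\in\Pic_\C^\ast$ if and only if
\[
\sum_{m>0,\,\mu}c(-m,\mu)\,b(m,\mu)+c(0,0)\,b(0,0)=0\qquad\text{for all }f\in M^!_{1-n/2,L^-}.
\]
On the other hand, Borcherds' obstruction theory \cite{Bo1}, that is, the vanishing of the constant term of the scalar weight two form $\langle f,h\rangle$ for any $h=\sum b(m,\mu)q^m\frake_\mu\in M_{1+n/2,L}$, gives
\[
\sum_{m>0,\,\mu}c(-m,\mu)\,b(m,\mu)+\sum_{\mu}c(0,\mu)\,b(0,\mu)=0 .
\]
The crucial point is that the Picard relation constrains the constant term of $f$ only through the single coefficient $c(0,0)$, whereas the obstruction pairing involves all of the $c(0,\mu)$; the two conditions coincide exactly when $b(0,\mu)=0$ for every $\mu\ne 0$. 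I would therefore prove that
\[
U=\big\{\,h\in M_{1+n/2,L}\ :\ b(0,\mu)=0\ \text{for all }\mu\ne 0\,\big\}.
\]
The inclusion ``$\subseteq$'' is immediate because the constant term of $A$, and hence of every $\ell\circ A$, is supported on $\mu=0$; for ``$\supseteq$'', if $h$ lies in the right hand side then the obstruction identity collapses to the displayed consistency condition, so the desired $\ell$ exists and $\Phi(\ell)=h$.

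Finally I would count dimensions. The constant term map $h\mapsto\sum_\mu b(0,\mu)\frake_\mu$ on $M_{1+n/2,L}$ has kernel $S_{1+n/2,L}$ and image some subspace $C\subset\C[L'/L]$, and the description above gives $\dim U=\dim S_{1+n/2,L}+\dim\big(C\cap\C\frake_0\big)$. Since $n\ge 3$ the weight $1+n/2$ exceeds $2$, so the standard Eisenstein series attached to the isotropic element $0\in L'/L$ converges and lies in $M_{1+n/2,L}$ with constant term $\frake_0$; thus $\frake_0\in C$ and $\dim(C\cap\C\frake_0)=1$. Combining the steps yields
\[
\rank(\Psp(X_\Gamma))=\dim U=1+\dim S_{1+n/2,L}.
\]
Granting the converse theorem, the non-formal ingredients are the modularity of $A$ and the precise matching of the obstruction pairing with the Picard relations; I expect the main obstacle to be the bookkeeping in that matching, in particular keeping track of the single coefficient $c(0,0)$ against the full tuple $(c(0,\mu))_\mu$, which is exactly what singles out the defining condition of $U$ and explains why only the Eisenstein series attached to $0$ survives in the rank.
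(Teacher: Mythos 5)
Your plan is correct, and it reaches the rank formula by a genuinely different route than the paper. The paper's proof (Section~\ref{sect:7.1}) goes \emph{forward}: it invokes the injectivity of $\Lambda\colon S_{1+n/2,L}\to\calH^{1,1}(X_\Gamma)$ (Theorem~\ref{thm:main}, available since $L$ splits $U(N)\oplus U$, so $S_{1+n/2,L}^+=S_{1+n/2,L}$) together with the commutative diagram \eqref{eq:diag1} from \cite[Theorem~5.9]{Br1}, which factors $\Lambda$ through the Chern class map $S_{1+n/2,L}\to\Psp(X_\Gamma)\otimes_\Z\C$; injectivity of $\Lambda$ forces this map to be injective, its image is the codimension one subspace of degree zero classes, and the class of the line bundle of modular forms (which lies in $\Psp(X_\Gamma)\otimes_\Z\C$ because Borcherds products of nonzero weight exist) supplies the remaining $+1$ (Corollary~\ref{cor:rankest}). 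You instead argue \emph{dually}: you embed the functionals on $\Psp(X_\Gamma)\otimes_\Z\C$ into $M_{1+n/2,L}$ via Borcherds' modularity of the generating series of special divisors \cite{Bo3}, identify the image as the forms whose constant term is supported on $\frake_0$ by matching the Picard relations (all of which come from Borcherds products, by the converse theorem, Theorem~\ref{thm:mainintro}) against the Serre duality obstruction pairing, and then count using the weight $1+n/2>2$ Eisenstein series. Both arguments ultimately rest on the same deep input --- the injectivity of $\Lambda$, which the paper converts into Theorem~\ref{thm:mainintro} via Theorem~\ref{lift+} --- but your packaging buys some extras: finite dimensionality of $\Psp(X_\Gamma)\otimes_\Z\C$ falls out for free, the upper bound $\rank\leq 1+\dim S_{1+n/2,L}$ needs only modularity of the generating series (the converse theorem is needed only for the lower bound), and you get a clean dual description of $\Psp(X_\Gamma)\otimes_\Z\C$ as a space of modular forms. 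The paper's route is shorter given its machinery and exhibits the isomorphism concretely through harmonic representatives.

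Three points deserve care when you write this up. First, work with $\Pic(X_\Gamma)$ of the open quasi-projective variety exactly as the paper defines it, not ``a suitable compactification'': boundary classes on a compactification could a priori inflate the rank, and both the theorem and Borcherds' relations $\sum_{m>0,\mu}\tfrac12 c(-m,\mu)[Z(m,\mu)]=\tfrac12 c(0,0)\,c_1(\calL)$ live on the open variety. Second, the converse theorem produces relations with integral coefficients; to conclude that \emph{all} $\C$-linear relations among the classes are spanned by Borcherds relations you need the standard flatness argument (the relation space is a $\Q$-subspace tensored with $\C$, and $\Q$-relations clear to integral ones). Third, your Eisenstein series step uses that for weight $k>2$ the series attached to $0\in L'/L$ has constant term exactly $\frake_0$, i.e.\ no $q^0\frake_\mu$ contribution for $\mu\neq 0$; this is true (the terms with $c\neq 0$ contribute only positive powers of $q$, by the Lipschitz formula) but should be stated with a reference rather than taken as obvious. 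None of these is a gap in the idea --- they are the bookkeeping you yourself flagged.
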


Note that the dimension of $S_{1+n/2,L}$ can be explicitly computed by means of the Selberg trace formula or the Riemann-Roch theorem,
% see Section \ref{sect:dimfor}.
see \cite[p.~228]{Bo3}.  The methods of the present paper can also be
used to prove injectivity results for other theta lifts of vector
valued modular forms such as \cite[Theorem 14.3]{Bo2}, see Section
\ref{sect:7.2}.

The present paper is organized as follows. In Section \ref{sect:2} we
collect some preliminaries on vector valued modular forms for the Weil
representation, and in Section \ref{sect:3} we develop a newform
theory in this setting.  Section \ref{sect:4} contains some facts on
modular varieties for orthogonal groups and special
divisors. Moreover,
%In Section \ref{sect:5}
we explain the lifting $\Lambda$ and the criterion for the converse
theorem. In Section \ref{sect:5.1} we consider lattices that split a
hyperbolic plane over $\Z$, and we prove Theorem \ref{thm:mainintro}
and Corollary \ref{cor:main2intro}. In Section \ref{sect:6} we
consider lattices of prime level and prove Theorem
\ref{thm:main2intro}. The applications to Picard groups and to other
theta liftings are considered in Section \ref{sect:7}.

I thank E.~Freitag and  N.~Scheithauer for many useful conversations on the content of this paper.

\section{Preliminaries}
\label{sect:2}
%[Metaplectic group.]

Here we briefly summarize some facts on lattices, discriminant
forms, and the Weil representation. For more details we refer to
\cite{Bo2}, \cite{Sch:Inv}, \cite{Sch}, \cite{Br1}.

Let $(L,Q)$ be a non-degenerate even lattice of signature $(b^+,b^-)$.
We denote by $(\cdot,\cdot)$ the bilinear form associated to the
quadratic form $Q$ (normalized such that $Q(x)=\frac{1}{2}(x,x)$).  We
write $L'$ for the dual lattice of $L$.
%The level of $L$ is defined as the smallest
%positive integer $N$ such that $NQ(x)\in \Z$ for all $x\in L'$.
The finite abelian group $L'/L$ is called the discriminant group of
$L$. Its order is equal to the absolute value of the Gram
determinant of $L$. We put $\sig(L)=b^+-b^-$.

Recall that a discriminant form is a finite abelian group $A$
together with a $\Q/\Z$-valued non-degenerate quadratic form
$x\mapsto Q(x)$, for $x\in A$ (see \cite{Ni}).
The level of $A$ is the smallest positive integer $N$ such that $NQ(x)\subset \Z$ for all $x\in A$.
If $L$ is
a non-degenerate even lattice then $L'/L$ is a discriminant form,
where the quadratic form is given by the mod $1$ reduction of the
quadratic form on $L'$. Conversely, every discriminant form can be
obtained in this way.
%\begin{comment}
The quadratic form on $L'/L$ determines the
signature of $L$ modulo $8$ by Milgram's formula:
%(see \cite{MH} Appendix 4):
\begin{equation}
\label{eq:milgram} \sum_{\lambda\in L'/L}
e(Q(\lambda))=\sqrt{|L'/L|} e( \sig(L)/8),
\end{equation}
where $e(z):=e^{2\pi i z}$ for $z\in \C$.
We define the signature $\sig(A)\in \Z/8\Z$ of a discriminant form
$A$ to be the signature of any even lattice with that discriminant
form.
%Moreover, we define the level of $A$ analogously.

\begin{comment}
If $A$ is a discriminant form, then we write $A^n$ for the subgroup
of elements that are $n$-th powers of elements of $A$. Moreover, we
write $A_n$ for the subgroup of elements of $A$ whose order divides
$n$. We have an exact sequence
\begin{align}
\label{eq;sequence} 0\longrightarrow A_n \longrightarrow A
\longrightarrow A^n \longrightarrow 0,
\end{align}
and $A^n$ is the orthogonal complement of $A_n$.
\end{comment}

Let $\H=\{\tau\in \C:\;\Im(\tau)>0\}$ be the complex upper half
plane.  We write $\Mp_2(\R)$ for the two-fold metaplectic
 cover of $\Sl(\R)$, realized as the group of pairs
  $(M,\phi(\tau))$ where $M=\kabcd\in\Sl_2(\R)$ and
$\phi:\H\to \C$ is a holomorphic function with
$\phi(\tau)^2=c\tau+d$.  The multiplication is defined by
\[
(M,\phi(\tau)) (M',\phi'(\tau))=(M M',\phi(M'\tau)\phi'(\tau)).
\]
%For $M\in\Gl_2^+(\R)$ we put
%\[
%M'=\left(M,\sqrt{c\tau+d}\right)\in\widetilde{\Gl}_2^+(\R)
%\]
%with the principal branch of the holomorphic square root.
We write $\Mp_2(\Z)$ for the integral metaplectic group, i.e.,
the inverse image of $\Gamma(1)=\Sl_2(\Z)$ under the covering map.
It is well known that
%the integral metaplectic group
$\Mp_2(\Z)$
is generated by $T=
\left( \kzxz{1}{1}{0}{1}, 1\right)$, and $S= \left(
\kzxz{0}{-1}{1}{0}, \sqrt{\tau}\right)$. One has the relations
$S^2=(ST)^3=Z$, where $Z=\left( \kzxz{-1}{0}{0}{-1}, i\right)$.
% is
%the standard generator of the center of $\Mp_2(\Z)$.

\subsection{The Weil representation}
\label{sect:2.1}
%[Define it as in \cite{Bo2}, \cite{Br1} for a lattice $(L,Q)$ of signature $(b^+,b^-)$.]

Let $A$ be a discriminant form.
Recall that there is a Weil representation
%associated with $A$
of $\Mp_2(\Z)$ on the group algebra $\C[A]$ (see e.g.~\cite{Bo2}, \cite{Br1}, \cite{We}).
We denote the
standard basis elements of $\C[A]$ by $\frake_\lambda$, $\lambda\in
A$, and write $\langle\cdot,\cdot \rangle$ for the standard scalar
product (antilinear in the second entry) such that $\langle
\frake_\lambda,\frake_\mu\rangle =\delta_{\lambda,\mu}$. The Weil
representation $\rho_A$ associated with $A$ is
the unitary representation of $\Mp_2(\Z)$ on the group algebra
$\C[A]$ defined by
\begin{align}
\label{eq:weilt}
\rho_A(T)(\frake_\lambda)&=e(Q(\lambda))\frake_\lambda,\\
\label{eq:weils}
\rho_A(S)(\frake_\lambda)&=\frac{e(-\sig(A)/8)}{\sqrt{|A|}} \sum_{\mu\in A} e(-(\lambda,\mu)) \frake_\mu.
%\end{align}
%\intertext{Note that}
%\begin{align}
%\label{eq:weilz} \rho_A(Z)(\frake_\lambda)&=e(-\sig(A)/4)
%\frake_{-\lambda}.
\end{align}
We have that $\rho_A(Z)(\frake_\lambda)=e(-\sig(A)/4)\frake_{-\lambda}$.
The automorphism group $\Aut(A)$ also acts on $\C[A]$ by
\begin{align}
\label{eq:weilh} \rho_A(h)(\frake_\lambda)&=\frake_{h\lambda}
\end{align}
for $h\in\Aut(A)$, and the actions of $\Mp_2(\Z)$ and $\Aut(A)$
commute.
It is well known that
$\rho_A$ is trivial on a subgroup of $\Mp_2(\Z)$ which is isomorphic via the projection map
to the principal congruence subgroup of level $N$, where $N$ is the level of $A$.
If $A=L'/L$ is the discriminant form associated to an even lattice $L$, then we briefly write $\rho_L$ for $\rho_{L'/L}$.
 Note that $\rho_L$ can be identified with a sub-representation of the usual Weil representation of $\Mp_2(\Z)$
on the space of Schwartz-Bruhat functions on $L\otimes \hat \Q$, see \cite{Ku:Integrals}.

%This representation is essentially the Weil representation attached to
%the quadratic module $(\calL,q)$ (see \cite{No}).

%If
%the signature of $A$ is even, then
%\eqref{eq:weilz} implies that
%$Z^2$ acts trivially. Hence, the Weil representation factors through
%$\Sl_2(\Z)$.

%
%If the signature of $A$ is odd, we notice that the level of $A$ must
%be divisible by $4$. This follows from the oddity formula (\cite{CS}
%p.~383 (30)) which implies that $A$ contains odd $2$-adic Jordan
%components.

%[Define it as in \cite{Bo2}, \cite{Br1} for a lattice $(L,Q)$ of signature $(b^+,b^-)$.]

\subsection{Vector valued modular forms}

%[Define harmonic Maass forms, (weakly) holmorphic modular forms, cusp forms. State dimension formula.]

%In many recent works vector valued modular forms associated to the
%Weil representation are considered (see e.g. \cite{Bo1}, \cite{Bo2},
%\cite{Br}, \cite{McG}, \cite{Sch}).
Let $k\in \frac{1}{2}\Z$.
%, and let
%$\Gamma'\subset \Mp_2(\Z)$ be a subgroup of finite index.
A holomorphic function $f:\H\to \C[A]$ is called a weakly holomorphic
modular form of
weight $k$ and type $\rho_A$ for the group $\Mp_2(\Z)$, if
\begin{align}
\label{eq:trans}
f(M\tau)= \phi(\tau)^{2k} \rho_A(M,\phi) f(\tau)
\end{align}
for all $(M,\phi)\in \Mp_2(\Z)$, and $f$ is meromorphic at the cusp $\infty$.
Such a function is called a holomorphic modular form if it
is actually holomorphic at $\infty$, and it is called a cusp form if it
vanishes at $\infty$.  We denote the complex vector space of such
weakly holomorphic modular forms by $M^!_{k,A}$.  We denote
the subspaces of holomorphic modular forms and cusp forms by
$M_{k,A}$ and $S_{k,A}$, respectively.
%If
%$\Gamma'=\Mp_2(\Z)$, we drop it from the notation and simply write for
%example $M_{k,A}$ for the space $M_{k,A}(\Gamma')$.
%It is an easy
%consequence of the the action of $Z$ in the Weil representation that
%$M^!_{k,A}=\{0\}$ unless $2k\equiv \sig(A)\pmod{2}$. Any $f\in M^!_{k,A}$ has a Fourier expansion of the form
%\begin{align}
%\label{eq:few}
%f(\tau)= \sum_{\mu\in A}\sum_{m\in Q(\mu)+\Z}c(m,\mu)q^m\frake_\mu,
%\end{align}
%where $q=e^{2\pi i\tau}$.
If $A=L'/L$ for an even lattice $L$, then we simply write  $M^!_{k,L}$ for $M^!_{k,L'/L}$, and analogously for other spaces of automorphic forms.

\begin{comment}
Recall that for $f,g\in M_{k,A}(\Gamma')$  the
Petersson scalar product is defined by
\begin{align}
(f,g)= \frac{1}{[\Mp_2(\Z):\Gamma']}\int_{\Gamma'\bs \H}  \langle f(\tau), g(\tau)\rangle \,v^k\,\frac{du\,dv}{v^2}.
\end{align}
Here $u$ denotes the real part and $v$ the imaginary part of $\tau\in \H$.
The Petersson scalar product converges when $\langle f(\tau), g(\tau)\rangle $ is a cusp form.
\texttt{Do we need that???}
\end{comment}

%\subsubsection{Harmonic Maass forms}

Following \cite{BF}, a smooth function $f:\H\to \C[A]$ is called a {\em harmonic Maass form} of weight $k$ with representation $\rho_A$ for $\Mp_2(\Z)$, if
\begin{itemize}
\item[(i)]
it satisfies the transformation law \eqref{eq:trans}
for all $(M,\phi)\in \Mp_2(\Z)$;
\item[(ii)] it satisfies $\Delta_k f =0$, where $\Delta_k$ is the
  hyperbolic Laplace operator in weight $k$;
\item[(iii)] it has at most linear exponential growth at the cusp.
\end{itemize}
The differential operator $\xi_k(f)= 2i v^k
\overline{\frac{\partial}{\partial \bar \tau}f} $ takes a harmonic
Maass form $f$ to a weakly holomorphic modular form of weight $2-k$
transforming with the dual of $\rho_A$.
Here $v$ denotes the imaginary part of $\tau\in \H$.

We let $H_{k,A}$ be the
subspace of those harmonic Maass forms of weight $k$ with
representation $\rho_A$ for $\Mp_2(\Z)$ for which $\xi_k(f)$ is a cusp
form. (This space was called $H_{k,A}^+$ in \cite{BF}.)  We
have the exact sequence
\begin{align}
\xymatrix{
0\ar[r]& M_{k,A}^! \ar[r]& H_{k,A} \ar[r]^{\xi_k}&  S_{2-k,A^-} \ar[r] & 0,
}
\end{align}
where $A^-$ denotes the discriminant form given by $A$ together with the quadratic form $-Q$.
%Note that $\rho_{A^-}$ can be identified with the dual representation of $\rho_A$.
Any $f\in H_{k,A}$ has a Fourier expansion of the form
\begin{align}
\label{weakmaass}
f(\tau)&= \sum_{\mu\in A}\sum_{m\in Q(\mu)+\Z} c^+(m,\mu) q^m\frake_\mu
+\sum_{\mu\in A}\sum_{\substack{m\in Q(\mu)+\Z \\ m<0}} c^-(m,\mu) \Gamma(1-k,4\pi |m|v)
q^m\frake_\mu,
\end{align}
where $\Gamma(a,t)$ denotes the incomplete gamma function. The finite sum
\begin{align*}
%\label{pp}
P_f(\tau)=\sum_{\mu\in A}\sum_{\substack{m\in Q(\mu)+\Z\\m<0}} c^+(m,\mu) q^m\frake_\mu
\end{align*}
is called the  {\em principal part} of $f$. It determines the growth of $f$ at the cusp $\infty$.
We say that $f$ has integral principal part,
if $c^+(m,\mu)$ for all $\mu\in A$ and all $m<0$.
% and $v$ is the imaginary part of $\tau$.

\begin{comment}
\subsubsection{The dimension formula}

\label{sect:dimfor}

Since $\rho_A$ factors through a finite quotient of $\Mp_2(\Z)$, it is
clear that the dimension of $M_{k,A}$ is finite. It can be computed
using the Riemann-Roch theorem or the Selberg trace formula, see \cite{Fi}.
Here, for simplicity, we assume that that $2k\equiv \sig(A)\pmod{4}$, since we will
only be interested in this case later.
%Then the following formula holds \cite[p.~228]{Bo2}:

Let $d=|A/\{\pm 1\}|$.
The $d$-dimensional
subspace $W=\Span\{\frake_\gamma+\frake_{-\gamma}:\;\gamma\in A \}$ of
$\C[A]$ is invariant under $\rho_A$, and
$\rho_A(Z)$ acts by multiplication with $e(-k/2)$ on $W$.
We denote by
$\rho$ the restriction of $\rho_A$ to $W$.
For a unitary matrix  $M$
of size $d$ with eigenvalues $e(\nu_j)$ and $0\leq \nu_j<1$,
%(for $j=1,\dots,d$),
we define
\[
\alpha(M)=\sum_{j=1}^d \nu_j.
\]
If $k>2$, then the dimension of $M_{k,A}$ is given by
\begin{equation}\label{dim1}
\dim_\C( M_{k,L} ) = d+dk/12-\alpha\left(e^{\pi i k/2}\rho(S)\right) - \alpha\left(\left( e^{\pi i k/3}\rho(ST)\right)^{-1}\right) -\alpha(\rho(T)),
\end{equation}
see \cite[p.~228]{Bo2}.
Furthermore, using Eisenstein series, it can be easily shown that the
codimension of $S_{k,A}$ in $M_{k,A}$ is equal to the cardinality of the set
%\begin{equation}\label{codim}
$\left\{\gamma\in A /\{\pm 1\}: \; Q(\gamma)\subset  \Z \right\}$,
%\end{equation}
see \cite{Br1} Chapter 1.2.3.

\end{comment}

\section{Newform theory for vector valued modular forms}
\label{sect:3}

Let $(A,Q)$ be a discriminant form. Let $H\subset A$ be an isotropic
subgroup, and write $H^\perp$ for its orthogonal complement in $A$.  Then
$B:=A_H=H^\perp/H$ together with the induced quadratic form is also a
discriminant form, and we have $|A|=|B|\cdot |H|^2$ and
$\sig(B)=\sig(A)$.

Let $f\in M_{k,A}$ and denote by $f_\mu$ for $\mu\in A$ the components
of $f$ with respect to the standard basis of $\C[A]$. Let $S\subset A$
be a subset. We say that $f$ is supported on $S$ if $f_\mu=0$ for all
$\mu \notin S$.

There are maps between the spaces $M_{k,A}$ and $M_{k,B}$, which we now describe.
The following result is Theorem 4.1 in \cite{Sch} (see also \cite[Lemma 5.6]{Br1} for a special case).

\begin{proposition}
\label{prop:1}
Let $g=\sum_{\nu\in B} g_\nu \frake_\nu\in M_{k,B}$. Then the $\C[A]$-valued function
\[
g\uparrow_H^A = \sum_{\mu\in H^\perp}
g_{\mu+H}\frake_\mu
\]
belongs to $M_{k,A}$. It is supported on $H^\perp$.
\end{proposition}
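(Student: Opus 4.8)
The plan is to realize the map $g\mapsto g\uparrow_H^A$ as composition with a single fixed linear intertwining operator, so that all analytic properties are inherited from $g$ and only a representation–theoretic identity remains to be checked. Define the linear map $\iota\colon\C[B]\to\C[A]$ on basis vectors by
\[
\iota(\frake_\nu)=\sum_{\substack{\mu\in H^\perp\\ \mu+H=\nu}}\frake_\mu,
\]
where for $\nu\in B=H^\perp/H$ the sum runs over the representatives of the coset $\nu$ lying in $H^\perp$. For $g=\sum_{\nu\in B}g_\nu\frake_\nu\in M_{k,B}$ one checks directly that $\iota\circ g=g\uparrow_H^A$, since the $\mu$-component of $\iota(g(\tau))$ equals $g_{\mu+H}(\tau)$ for $\mu\in H^\perp$ and $0$ otherwise. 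Consequently, if I can show that $\iota$ intertwines the Weil representations, i.e.\ $\iota\circ\rho_B(M,\phi)=\rho_A(M,\phi)\circ\iota$ for all $(M,\phi)\in\Mp_2(\Z)$, then the transformation law \eqref{eq:trans} for $g\uparrow_H^A$ follows at once by applying $\iota$ to the transformation law of $g$. Because $\Mp_2(\Z)$ is generated by $T$ and $S$, it suffices to verify the intertwining identity on these two generators.

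For $T$ the verification rests on the fact that $Q$ is constant modulo $\Z$ on each coset $\mu+H$ and there equals the induced quadratic form $Q_B$ on $B$: for $h\in H$ and $\mu\in H^\perp$ one has $Q(\mu+h)=Q(\mu)+(\mu,h)+Q(h)\equiv Q(\mu)\pmod\Z$, using that $H$ is isotropic and that $\mu\in H^\perp$. Comparing the scalars $e(Q(\mu))$, by which $\rho_A(T)$ acts on each $\frake_\mu$ (see \eqref{eq:weilt}), with the scalar $e(Q_B(\nu))$ acting on $\frake_\nu$ via $\rho_B(T)$, then gives $\iota(\rho_B(T)\frake_\nu)=\rho_A(T)(\iota\,\frake_\nu)$ term by term.

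The main work, and the only genuinely computational step, is the identity for $S$. Writing both sides out using \eqref{eq:weils}, and choosing for $\nu$ a representative $\mu_0\in H^\perp$, the $\iota\circ\rho_B(S)$ side becomes $\tfrac{e(-\sig(B)/8)}{\sqrt{|B|}}\sum_{\mu\in H^\perp}e(-(\mu_0,\mu))\frake_\mu$, using that the induced bilinear form satisfies $(\nu,\mu+H)_B\equiv(\mu_0,\mu)\pmod\Z$. On the $\rho_A(S)\circ\iota$ side the $\frake_\mu$–coefficient (for $\mu\in A$) involves the character sum $\sum_{h\in H}e(-(h,\mu))$, which by orthogonality of characters on the finite group $H$ equals $|H|$ when $\mu\in H^\perp$ and $0$ otherwise; thus the sum over $A$ collapses to a sum over $H^\perp$, matching the support on the other side. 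It then remains to reconcile the prefactors, which is exactly where the hypotheses $\sig(B)=\sig(A)$ and $|A|=|B|\cdot|H|^2$ enter: they yield $e(-\sig(A)/8)=e(-\sig(B)/8)$ and $|H|/\sqrt{|A|}=1/\sqrt{|B|}$, so the two expressions agree. With the intertwining property established, $g\uparrow_H^A=\iota\circ g$ is holomorphic on $\H$ and transforms correctly; finally, since $\iota$ merely relabels components, the Fourier expansion of $g\uparrow_H^A$ is read off from that of $g$, so holomorphy at the cusp $\infty$ is inherited and $g\uparrow_H^A\in M_{k,A}$. The support statement is immediate from the definition of $\iota$.
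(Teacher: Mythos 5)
Your proof is correct. The paper offers no argument of its own for Proposition \ref{prop:1} --- it simply cites \cite[Theorem 4.1]{Sch} (and \cite[Lemma 5.6]{Br1} for a special case) --- and those references proceed essentially as you do: one checks that the induction map $\iota$ intertwines $\rho_B$ and $\rho_A$ on the generators $T$ and $S$ of $\Mp_2(\Z)$, using that $Q$ descends to $B$ on cosets of $H$ inside $H^\perp$ for the $T$-identity, and for the $S$-identity using the character-sum orthogonality $\sum_{h\in H}e(-(h,\mu))=|H|$ or $0$ according as $\mu\in H^\perp$ or not, together with $\sig(A)=\sig(B)$ and $|A|=|B|\cdot|H|^2$ to reconcile the Gauss-sum prefactors.
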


The next proposition generalizes \cite[Lemma 5.7]{Br1}.

\begin{proposition}
\label{prop:2}
Let $f=\sum_{\mu\in A} f_\mu \frake_\mu\in M_{k,A}$. Then the $\C[B]$-valued function
\[
f\downarrow_H^A = \sum_{\mu\in H^\perp}  f_\mu \frake_{\mu+H}
\]
belongs to $M_{k,B}$.
\end{proposition}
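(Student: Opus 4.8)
The plan is to reduce the statement to an intertwining property of a single linear map on the underlying group algebras. Define $\pi\colon\C[A]\to\C[B]$ on basis vectors by $\pi(\frake_\mu)=\frake_{\mu+H}$ if $\mu\in H^\perp$ and $\pi(\frake_\mu)=0$ otherwise, extended linearly. Then $f\downarrow_H^A$ is simply the function $\tau\mapsto\pi(f(\tau))$, since $\pi(f(\tau))=\sum_{\mu\in H^\perp}f_\mu(\tau)\frake_{\mu+H}$. Because $f$ is holomorphic on $\H$ and its components $f_\mu$ are holomorphic at the cusp, the same holds for the components of $\pi\circ f$, which are finite sums of the $f_\mu$; hence $f\downarrow_H^A$ is holomorphic on $\H$ and at $\infty$. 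The only remaining point is the transformation law \eqref{eq:trans} for $\rho_B$, and this will follow once we show that $\pi$ intertwines the two Weil representations, i.e. $\pi\circ\rho_A(M,\phi)=\rho_B(M,\phi)\circ\pi$ for all $(M,\phi)\in\Mp_2(\Z)$. Indeed, applying $\pi$ to \eqref{eq:trans} and pulling the scalar $\phi(\tau)^{2k}$ through the linear map $\pi$ yields the transformation law for $\pi\circ f$ under $\rho_B$.

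Since $\Mp_2(\Z)$ is generated by $S$ and $T$, it suffices to verify the intertwining relation on these two generators, and here the three structural facts recorded at the start of the section—that $H$ is isotropic, that $\sig(B)=\sig(A)$, and that $|A|=|B|\cdot|H|^2$—all enter. For $T$ the check is immediate: for $\mu\in H^\perp$ the induced quadratic form on $B$ satisfies $Q(\mu+H)=Q(\mu)$ (isotropy of $H$ makes this well defined), so by \eqref{eq:weilt} both $\pi\circ\rho_A(T)$ and $\rho_B(T)\circ\pi$ send $\frake_\mu$ to $e(Q(\mu))\frake_{\mu+H}$, while for $\mu\notin H^\perp$ both sides vanish. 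The main point is the relation for $S$. Starting from \eqref{eq:weils}, one computes $\pi(\rho_A(S)\frake_\lambda)=\frac{e(-\sig(A)/8)}{\sqrt{|A|}}\sum_{\mu\in H^\perp}e(-(\lambda,\mu))\frake_{\mu+H}$ and groups the sum over $\mu\in H^\perp$ according to the coset $\mu+H\in B$; the coefficient of a fixed $\frake_{\mu_0+H}$ then produces the inner character sum $\sum_{h\in H}e(-(\lambda,h))$, which equals $|H|$ if $\lambda\in H^\perp$ and $0$ otherwise.

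This dichotomy is exactly what is needed. If $\lambda\notin H^\perp$, the right-hand side vanishes, matching $\rho_B(S)\pi(\frake_\lambda)=0$. If $\lambda\in H^\perp$, the expression collapses to $\frac{e(-\sig(A)/8)\,|H|}{\sqrt{|A|}}\sum_{\nu\in B}e(-(\lambda,\nu))\frake_\nu$, where $(\lambda,\nu)$ denotes the well-defined induced pairing on $B$ (well defined because $\lambda,\nu\in H^\perp$ and $H$ is isotropic). Comparing with $\rho_B(S)\frake_{\lambda+H}=\frac{e(-\sig(B)/8)}{\sqrt{|B|}}\sum_{\nu\in B}e(-(\lambda,\nu))\frake_\nu$, the two agree precisely because $\sig(B)=\sig(A)$ makes the phase factors equal and $|H|/\sqrt{|A|}=1/\sqrt{|B|}$, the latter being equivalent to $|A|=|B|\cdot|H|^2$. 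This establishes the $S$-relation and hence the intertwining on all of $\Mp_2(\Z)$, completing the argument.

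I expect the bookkeeping of the $S$-relation—tracking the coset decomposition, the character-sum dichotomy, and the exact cancellation of the normalizing constants against the signature phases—to be the only nontrivial part. An alternative, shorter route is available: since Proposition \ref{prop:1} exhibits $\uparrow_H^A$ as an intertwiner $\C[B]\to\C[A]$ and the Weil representations are unitary for the standard scalar products, one checks that $\pi$ is the adjoint of the linear map underlying $\uparrow_H^A$, whence $\pi$ automatically intertwines $\rho_A$ and $\rho_B$. This avoids redoing the $S$-computation, at the cost of reading Proposition \ref{prop:1} at the level of the coefficient map rather than only on spaces of modular forms.
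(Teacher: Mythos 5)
Your proof is correct. Note that the paper itself gives no proof of this proposition at all: it merely remarks that the statement generalizes Lemma 5.7 of \cite{Br1}, leaving the verification to the literature (just as Proposition \ref{prop:1} is delegated to Theorem 4.1 of \cite{Sch}). Your argument supplies exactly the missing verification, and it does so in the way the cited references do: one checks that the coefficient map $\pi$ intertwines $\rho_A$ and $\rho_B$ on the generators $T$ and $S$, where the $T$-relation uses that the induced form on $B=H^\perp/H$ is well defined (isotropy of $H$ together with $\mu\in H^\perp$), and the $S$-relation reduces, after grouping $\mu\in H^\perp$ into $H$-cosets, to the character-sum dichotomy $\sum_{h\in H}e(-(\lambda,h))=|H|$ or $0$ according as $\lambda\in H^\perp$ or not, with the constants matching precisely because $\sig(B)=\sig(A)$ and $|H|/\sqrt{|A|}=1/\sqrt{|B|}$. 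All of these computations are right. Your alternative route via adjointness is also sound, with the caveat you yourself flag: the statement of Proposition \ref{prop:1} only asserts a map on spaces of modular forms, so to run the adjoint argument one must know (from \cite{Sch}) that $\uparrow_H^A$ is induced by an intertwining operator on the group algebras; your first, self-contained argument does not need this and is therefore the one to keep.
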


%\texttt{Give proof?}

The following proposition provides a converse for Proposition \ref{prop:1}.

\begin{proposition}
\label{prop:3}
Let $f\in M_{k,A}$, and assume that $f$ is supported on $H^\perp$.
Then $f_{\mu+\mu'}=f_\mu$ for all $\mu \in A$, $\mu'\in H$, and
\begin{align}
\label{eq:downup}
f= \frac{1}{|H|} f\downarrow_H^A\uparrow_H^A.
\end{align}
\end{proposition}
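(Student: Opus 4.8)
The plan is to establish the pointwise relation $f_{\mu+\mu'}=f_\mu$ first, and then read off \eqref{eq:downup} by a direct computation. Granting that $f_{\mu+\mu'}=f_\mu$ for all $\mu\in A$ and $\mu'\in H$, one checks for $\mu\in H^\perp$ that the $\frake_\mu$-component of $f\downarrow_H^A\uparrow_H^A$ is $(f\downarrow_H^A)_{\mu+H}=\sum_{\mu'\in H}f_{\mu+\mu'}=|H|\,f_\mu$, whereas for $\mu\notin H^\perp$ both sides vanish since $f$ is supported on $H^\perp$; this is exactly $f=\tfrac{1}{|H|}f\downarrow_H^A\uparrow_H^A$. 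Thus the whole statement reduces to proving that the components of $f$ are invariant under translation by $H$.

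The key structural observation I would use is that the operator $\rho_A(S)$ interchanges the two conditions \emph{supported on $H^\perp$} and \emph{invariant under translation by $H$}. Both implications follow in one line from \eqref{eq:weils} and the fact that $(\lambda,\mu')\in\Z$ whenever $\lambda\in H^\perp$ and $\mu'\in H$: if $g$ is supported on $H^\perp$, then in $(\rho_A(S)g)_\mu=\tfrac{e(-\sig(A)/8)}{\sqrt{|A|}}\sum_{\lambda\in H^\perp}e(-(\lambda,\mu))g_\lambda$ one may replace $\mu$ by $\mu+\mu'$ without changing the sum, so $\rho_A(S)g$ is $H$-invariant; the converse implication is analogous.

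The essential step is then to show that $\rho_A(S)f$ is itself supported on $H^\perp$. For this I would apply the transformation law \eqref{eq:trans} for the generator $S$ componentwise: for every $\mu\in A$ one has $f_\mu(-1/\tau)=\phi(\tau)^{2k}\,(\rho_A(S)f(\tau))_\mu$. If $\mu\notin H^\perp$, then $f_\mu$ is the zero function by hypothesis, so the left hand side vanishes identically in $\tau$, and since $\phi(\tau)^{2k}\neq 0$ we conclude $(\rho_A(S)f)_\mu\equiv 0$. Hence $\rho_A(S)f$ is supported on $H^\perp$. Applying $\rho_A(S)$ once more and using $S^2=Z$ together with the observation of the previous paragraph, $\rho_A(Z)f=\rho_A(S)\bigl(\rho_A(S)f\bigr)$ is $H$-invariant. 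Finally, since $\rho_A(Z)\frake_\lambda=e(-\sig(A)/4)\frake_{-\lambda}$ and $-H=H$, the $H$-invariance of $\rho_A(Z)f$ is equivalent to that of $f$, which gives $f_{\mu+\mu'}=f_\mu$ and completes the argument.

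The step I expect to be the main obstacle — or rather the one that is easy to overlook — is the transport of the support hypothesis through the functional equation. Evaluating \eqref{eq:trans} at a fixed point only shows that $\rho_A(S)f$ is $H$-invariant, a statement that holds for \emph{any} $f$ supported on $H^\perp$ and carries no information about $f$ itself; naively this makes the two hypotheses look symmetric under $\rho_A(S)$, so that no progress seems possible. The decisive point is instead to use that $f_\mu$ vanishes \emph{as a function}, whence its value at $-1/\tau$ also vanishes, forcing the $\mu$-component of $\rho_A(S)f$ to be identically zero. This use of the support condition ``at the cusp $0$'' is what breaks the apparent symmetry; note that it relies only on the transformation law, not on holomorphicity at the cusp.
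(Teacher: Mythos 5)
Your proof is correct --- each step checks out: the pointwise application of your linear-algebra observation to the vector $\rho_A(S)f(\tau)$, the use of $S^2=Z$, the unraveling of $\rho_A(Z)$, and the reduction of \eqref{eq:downup} to $H$-invariance of the components. However, it is a detour compared to the paper's argument, and your closing diagnosis of the ``main obstacle'' is mistaken. The paper's proof is exactly the argument you dismiss: by \eqref{eq:trans}, the $\mu$-component of $\rho_A(S)f(\tau)$ equals $\tau^{-k}f_\mu(-1/\tau)$, so the statement that $\rho_A(S)f(\tau)$ is $H$-invariant --- which, as you note, holds for any vector supported on $H^\perp$ because $e(-(\lambda,\mu'))=1$ for $\lambda\in H^\perp$, $\mu'\in H$ --- says precisely that $f_{\mu+\mu'}(-1/\tau)=f_\mu(-1/\tau)$ for all $\tau$, hence that $f$ itself is $H$-invariant, since $\tau\mapsto -1/\tau$ is a bijection of $\H$. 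Far from carrying ``no information about $f$ itself'', the $H$-invariance of $\rho_A(S)f$ \emph{is} the desired conclusion; the support hypothesis and the invariance conclusion are exchanged by the functional equation, and that exchange is the whole proof. Your route instead pushes the support condition forward (vanishing of $f_\mu$ as a function forces $(\rho_A(S)f)_\mu\equiv 0$), applies $\rho_A(S)$ a second time to land on $\rho_A(Z)f$, and then uses $\rho_A(Z)\frake_\lambda=e(-\sig(A)/4)\frake_{-\lambda}$ together with $-H=H$. That is valid, and your packaging of the mechanism --- $\rho_A(S)$ interchanges ``supported on $H^\perp$'' with ``invariant under translation by $H$'' --- is a clean conceptual way to see what is going on; but it costs an extra application of $S$ and the discussion of $Z$, and buys nothing beyond what the paper's one-step argument already gives.
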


\begin{proof}
Using \eqref{eq:weils}, we see that
\[
f_\mu(-1/\tau) = \tau^k\frac{e(-\sig(A)/8)}{\sqrt{|A|}} \sum_{\nu\in A} e(-(\mu,\nu)) f_\nu(\tau)
\]
for all $\mu\in A$. Since $f$ is supported on $H^\perp$, we have
\[
f_\mu(-1/\tau) = \tau^k\frac{e(-\sig(A)/8)}{\sqrt{|A|}} \sum_{\nu\in H^\perp} e(-(\mu,\nu)) f_\nu(\tau).
\]
Consequently, we obtain for $\mu'\in H$ that
\begin{align*}
f_{\mu+\mu'}(-1/\tau) &= \tau^k\frac{e(-\sig(A)/8)}{\sqrt{|A|}} \sum_{\nu\in H^\perp} e(-(\mu+\mu',\nu)) f_\nu(\tau)
=f_{\mu}(-1/\tau).
\end{align*}
This proves that $f_{\mu+\mu'}=f_\mu$.
% for all $\mu \in A$, $\mu'\in H$.
The identity \eqref{eq:downup} is an immediate consequence.
\end{proof}

\begin{lemma}
\label{lem:4}
Let $f\in M_{k,A}$. If $G\subset A$ is any subgroup, then for all $\mu \in A$ we have
\begin{align*}
\frac{1}{|G|}
\sum_{\mu'\in G}f_{\mu+\mu'}(-1/\tau) = \tau^k\frac{e(-\sig(A)/8)}{\sqrt{|A|}} \sum_{\nu\in G^\perp} e(-(\mu,\nu)) f_\nu(\tau).
\end{align*}
\end{lemma}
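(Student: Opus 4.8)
The plan is to reduce the statement to the $S$-transformation behavior of the components $f_\mu$ already recorded at the start of the proof of Proposition \ref{prop:3}, and then to evaluate a single character sum over $G$. First I would recall from \eqref{eq:weils} that for every $\mu\in A$,
\[
f_\mu(-1/\tau) = \tau^k\frac{e(-\sig(A)/8)}{\sqrt{|A|}} \sum_{\nu\in A} e(-(\mu,\nu)) f_\nu(\tau),
\]
which holds here without any support hypothesis on $f$. Applying this with $\mu$ replaced by $\mu+\mu'$ for $\mu'\in G$, summing over $\mu'\in G$, dividing by $|G|$, interchanging the two finite sums, and using the biadditivity of the bilinear form to factor $e(-(\mu+\mu',\nu))=e(-(\mu,\nu))e(-(\mu',\nu))$, I obtain
\[
\frac{1}{|G|}\sum_{\mu'\in G}f_{\mu+\mu'}(-1/\tau) = \tau^k\frac{e(-\sig(A)/8)}{\sqrt{|A|}} \sum_{\nu\in A} e(-(\mu,\nu))\Bigl(\frac{1}{|G|}\sum_{\mu'\in G} e(-(\mu',\nu))\Bigr) f_\nu(\tau).
\]

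The key step is to evaluate the inner sum. For fixed $\nu\in A$, the map $\mu'\mapsto e(-(\mu',\nu))$ is a character of the finite abelian group $G$, since $(\cdot,\cdot)$ is $\Q/\Z$-valued and biadditive. By the orthogonality relations for characters, the quantity $\frac{1}{|G|}\sum_{\mu'\in G} e(-(\mu',\nu))$ equals $1$ when this character is trivial and $0$ otherwise; and it is trivial precisely when $(\mu',\nu)=0$ in $\Q/\Z$ for all $\mu'\in G$, i.e.\ when $\nu\in G^\perp$. Substituting this back collapses the sum over $\nu\in A$ to a sum over $\nu\in G^\perp$, which is exactly the asserted identity.

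I do not expect a genuine obstacle here: the only inputs are the $S$-transformation formula and the elementary orthogonality of characters on $G$, so the argument is essentially a one-line computation once the character sum is recognized. The one point worth keeping straight is that, in contrast to Proposition \ref{prop:3}, no support assumption on $f$ is required; the restriction to $G^\perp$ on the right-hand side arises entirely from the orthogonality relation rather than from any property of $f$. (Taking $G=H$ and specializing to forms supported on $H^\perp$ recovers the formula used in the proof of Proposition \ref{prop:3}, so this lemma is the natural general version of that computation.)
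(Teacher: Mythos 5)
Your proposal is correct and matches the paper's own proof essentially step for step: both apply the $S$-transformation formula \eqref{eq:weils} to each component $f_{\mu+\mu'}$, interchange the finite sums, and evaluate the resulting character sum over $G$ by orthogonality, which restricts the outer sum to $\nu\in G^\perp$. Your added remark that no support hypothesis is needed is accurate and consistent with how the lemma is stated and used in the paper.
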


\begin{proof}
By means of \eqref{eq:weils}, we see that
\begin{align*}
\frac{1}{|G|}
\sum_{\mu'\in G}f_{\mu+\mu'}(-1/\tau) &=  \tau^k\frac{e(-\sig(A)/8)}{|G|\sqrt{|A|}}\sum_{\mu'\in G} \sum_{\nu\in A}  e(-(\mu+\mu',\nu))f_\nu(\tau)\\
&=  \tau^k\frac{e(-\sig(A)/8)}{|G|\sqrt{|A|}} \sum_{\nu\in A}  e(-(\mu,\nu))f_\nu(\tau)\sum_{\mu'\in G}e(-(\mu',\nu)).\\
\end{align*}
Using orthogonality of characters, we find that the latter sum over $\mu'\in G$ vanishes unless $\nu\in G^\perp$, in which case it is equal to $|G|$. This proves the lemma.
\end{proof}

\begin{lemma}
\label{lem:5}
Let $C_1,\dots,C_m\subset A$ be subsets. For every subset $S\subset \{1,\dots,m\}$ put
$C(S)= \bigcap_{i\in S} C_i$. Then we have
\[
\chi_{C_1\cup\dots\cup C_m} = \sum_{\emptyset\neq S\subset\{1,\dots,m\}} (-1)^{|S|+1} \chi_{C(S)}.
\]
Here $\chi_C:A\to \{0,1\}$ is the characteristic function of $C\subset A$.
\end{lemma}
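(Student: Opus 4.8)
The plan is to verify the identity pointwise. I would fix an arbitrary element $x\in A$ and evaluate both sides of the claimed equation at $x$, thereby reducing the statement to a purely combinatorial identity about the index set $\{1,\dots,m\}$. Since two functions on $A$ are equal precisely when they agree at every point, this suffices.

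First I would introduce $T=\{i:\;x\in C_i\}$, the set of indices $i$ for which $x$ lies in $C_i$. The key observation is that $\chi_{C(S)}(x)=1$ if and only if $x\in C_i$ for every $i\in S$, which happens exactly when $S\subset T$. Hence the right-hand side evaluated at $x$ equals $\sum_{\emptyset\neq S\subset T}(-1)^{|S|+1}$, where only subsets of $T$ contribute.

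Then I would split into two cases according to whether $x$ lies in the union $C_1\cup\dots\cup C_m$. If $x$ lies in none of the $C_i$, then $T=\emptyset$, the sum over nonempty $S\subset T$ is empty and hence vanishes, matching $\chi_{C_1\cup\dots\cup C_m}(x)=0$. If instead $x$ lies in at least one $C_i$, so that $t:=|T|\geq 1$, I would compute
\[
\sum_{\emptyset\neq S\subset T}(-1)^{|S|+1} = 1-\sum_{S\subset T}(-1)^{|S|} = 1-(1-1)^t = 1,
\]
using the binomial theorem for the inner sum over all subsets of $T$; this matches $\chi_{C_1\cup\dots\cup C_m}(x)=1$. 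As $x$ was arbitrary, the two functions coincide.

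I do not expect any real obstacle here, as this is the classical inclusion--exclusion principle phrased for characteristic functions; the only point requiring care is the sign bookkeeping in the final binomial computation (in particular rewriting $(-1)^{|S|+1}=-(-1)^{|S|}$ and isolating the empty subset). An alternative route would be induction on $m$ via the elementary identities $\chi_{C\cup D}=\chi_C+\chi_D-\chi_C\chi_D$ and $\chi_{C\cap D}=\chi_C\chi_D$, but the pointwise argument is shorter and more transparent, so I would present that one.
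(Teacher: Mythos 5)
Your proof is correct and complete. The paper itself offers no real argument here: it simply records that this is a well-known set-theoretic fact and remarks that it can be proved by induction on $m$ (which is the inclusion--exclusion principle in characteristic-function form, the alternative route you mention at the end). Your pointwise argument is a genuinely different and more self-contained route: fixing $x\in A$, reducing to the index set $T=\{i:\;x\in C_i\}$, and evaluating
\[
\sum_{\emptyset\neq S\subset T}(-1)^{|S|+1}=1-(1-1)^{|T|}=1
\]
for $T\neq\emptyset$ via the binomial theorem settles everything in one stroke, with the empty-$T$ case trivially giving $0=0$. What your approach buys is that it avoids the bookkeeping of an inductive step (where one must combine $\chi_{C\cup D}=\chi_C+\chi_D-\chi_{C\cap D}$ with the induction hypothesis applied to the intersected family $C_i\cap C_m$); what the induction buys is that it generalizes mechanically to identities in rings of functions where a pointwise evaluation is less natural. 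For the purpose this lemma serves in the paper --- feeding into the proof of Theorem~\ref{thm:6}, where it is applied with $C_i=H_i^\perp$ --- either proof is entirely adequate.
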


\begin{proof}
This set theoretic fact is well known. It can be proved by induction on $m$.
\end{proof}

\begin{theorem}
\label{thm:6}
Let $H_1,\dots,H_m\subset A$ be isotropic subgroups of prime order $p_i=|H_i|$ with $p_i\neq p_j$ for $i\neq j$.
For a subset $S\subset \{1,\dots,m\}$ let  $H_S:=\sum_{i\in S} H_i$.
If $f\in M_{k,A}$ is supported on $H_1^\perp \cup \dots\cup H_m^\perp$, then
\[
f= \sum_{\emptyset\neq S\subset\{1,\dots,m\}} (-1)^{|S|+1} \frac{1}{|H_S|} f \downarrow_{H_S}^A\uparrow_{H_S}^A.
\]
%where $H_S:=\sum_{i\in S} H_i$ for a subset $S\subset \{1,\dots,m\}$.
\end{theorem}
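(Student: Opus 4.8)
The plan is to reduce the stated formula to a single operator identity for commuting projections, and then to prove that identity. First I would record what the distinct-prime hypothesis buys: for $h'\in H_i$, $h''\in H_j$ with $i\neq j$ the value $(h',h'')\in\Q/\Z$ is annihilated by both $p_i$ and $p_j$, hence vanishes, so together with isotropy the subgroup $H_S=\bigoplus_{i\in S}H_i$ is isotropic of order $\prod_{i\in S}p_i$, one has $H_S^\perp=\bigcap_{i\in S}H_i^\perp$, and each $H_i$ lies in $\bigcap_l H_l^\perp$. Writing $C_i:=H_i^\perp$ and $P_i:=\frac{1}{|H_i|}\downarrow_{H_i}^A\uparrow_{H_i}^A$, Propositions \ref{prop:1} and \ref{prop:3} show that $P_i$ is a projection of $M_{k,A}$ onto the subspace of forms supported on $C_i$ (the image of $\uparrow_{H_i}^A$). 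Using $H_i\subseteq\bigcap_l C_l$ one checks that the $P_i$ commute and, more precisely, that $\frac{1}{|H_S|}\downarrow_{H_S}^A\uparrow_{H_S}^A=\prod_{i\in S}P_i$. Inclusion--exclusion for commuting idempotents (the operator form of Lemma \ref{lem:5}) then rewrites the assertion as the equivalent statement $\prod_{i=1}^m(I-P_i)f=0$.

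Next I would assemble the information available about $w:=\prod_{i=1}^m(I-P_i)f$. Since $\downarrow_{H_j}^A\uparrow_{H_j}^A=|H_j|\cdot\mathrm{id}$ one has $\downarrow_{H_j}^A(I-P_j)=0$, and as the factors commute this gives $\downarrow_{H_j}^A w=0$ for every $j$; moreover $w$ is supported on $\bigcup_i C_i$, since each $\prod_{i\in S}P_i f$ is supported on $H_S^\perp\subseteq C_i$. Independently, applying \eqref{eq:weils} componentwise to $f_\mu(-1/\tau)$, restricting the resulting character sum to $\bigcup_i C_i$ by the support hypothesis, expanding $\chi_{\bigcup_i C_i}$ by Lemma \ref{lem:5} and evaluating each sum over $H_S^\perp$ by Lemma \ref{lem:4}, I obtain
\[
f_\mu(-1/\tau)=\sum_{\emptyset\neq S\subseteq\{1,\dots,m\}}(-1)^{|S|+1}\frac{1}{|H_S|}\sum_{h'\in H_S}f_{\mu+h'}(-1/\tau)
\]
for all $\mu$; equivalently, writing $(E_if)_\mu=\frac{1}{|H_i|}\sum_{h'\in H_i}f_{\mu+h'}$ for the $H_i$-averaging operator on components, $\prod_{i=1}^m(I-E_i)f=0$.

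The hard part will be upgrading $\prod_i(I-E_i)f=0$ to $\prod_i(I-P_i)f=0$: the two differ by the support-restriction factors $\chi_{H_S^\perp}$, and at the level of arbitrary $\C[A]$-valued functions the averaging identity does \emph{not} force the projection identity, so genuinely modular input is needed. I would argue by induction on $m$, the case $m=1$ being Proposition \ref{prop:3}. For the step one writes $f=P_mf+(I-P_m)f$ with $P_mf$ supported on $C_m$, so it suffices to show that $(I-P_m)f$ is supported on $\bigcup_{i<m}C_i$ and then to apply the inductive hypothesis (the images of $H_1,\dots,H_{m-1}$ still having distinct prime orders). This support claim is equivalent to the single-prime invariance $f_{\mu+h'}=f_\mu$ for $h'\in H_m$ and $\mu\in C_m\setminus\bigcup_{i<m}C_i$, and this is the crux: the one application of the $S$-transform only controls the $H_m$-coset \emph{sums} of $(I-P_m)f$ (indeed $(I-P_m)f\in\ker\downarrow_{H_m}^A$), not the individual values along such a coset. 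I expect to close this gap by invoking the adjointness of $\uparrow_{H_m}^A$ and $\downarrow_{H_m}^A$ under the Petersson pairing together with its nondegeneracy, controlling the non-cuspidal contribution through holomorphy and the growth condition; this forces the coset-averaged form to agree with $f$ on the locus in question and completes the induction.
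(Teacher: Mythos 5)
Your reduction to the operator identity $\prod_{i=1}^m(I-P_i)f=0$, the verification that the $P_i$ are commuting idempotents with $\frac{1}{|H_S|}\downarrow_{H_S}^A\uparrow_{H_S}^A=\prod_{i\in S}P_i$, and your derivation of the averaging identity $\prod_{i=1}^m(I-E_i)f=0$ from \eqref{eq:weils}, Lemma \ref{lem:4} and Lemma \ref{lem:5} are all correct, and they coincide with the first steps of the paper's proof; your induction skeleton (split off $P_mf$, show $(I-P_m)f$ is supported on $H_1^\perp\cup\dots\cup H_{m-1}^\perp$, apply the inductive hypothesis) is also exactly the paper's. But the proof is not complete: the one claim you label as the crux --- that $f_{\mu+\mu'}=f_\mu$ for $\mu'\in H_m$ and $\mu\in H_m^\perp\setminus\bigcup_{i<m}H_i^\perp$ --- is left unproven, and the plan you sketch for it does not work. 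The Petersson pairing is not defined on all of $M_{k,A}$ (the integral diverges off the cuspidal part), and on cusp forms the adjointness of $\uparrow_{H_m}^A$ and $\downarrow_{H_m}^A$ only says that $P_m$ is a self-adjoint idempotent; that re-derives the orthogonal decomposition into $\operatorname{im}(P_m)$ and $\ker(\downarrow_{H_m}^A)$ and the fact $\downarrow_{H_m}^A(I-P_m)f=0$, which you already have. A global inner-product statement of this kind cannot detect the \emph{support} of $(I-P_m)f$, i.e.\ the vanishing of individual components on a prescribed subset of $A$; as you yourself observe, coset sums do not determine coset values.

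What is actually needed --- and what the paper does at this point (with the index $1$ in place of your $m$) --- is a second use of modularity, through the $T$-transformation, which your argument never invokes after the $S$-transform. One substitutes $\tau\mapsto\tau+a$ into the averaging identity; since $Q(\mu')=0$ for $\mu'\in H_S$, equation \eqref{eq:weilt} gives $f_{\mu+\mu'}(\tau+a)=e(aQ(\mu))\,e(a(\mu,\mu'))\,f_{\mu+\mu'}(\tau)$, so after dividing by $e(aQ(\mu))$ and summing over $a$ modulo the level of $A$, orthogonality of characters kills every term with $(\mu,\mu')\notin\Z$. This refines the averaging identity to one in which the inner sums run only over $\mu'\in H_S$ with $\mu'\perp\mu$. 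Only now does the distinct-prime hypothesis enter: for $\mu\in H_1^\perp$ with $\mu\notin H_i^\perp$ for $i\geq 2$, the Chinese remainder theorem shows that $\mu'\in H_S$ is orthogonal to $\mu$ if and only if $\mu'\in H_S\cap H_1$, and a short bookkeeping computation (via the bijection $S\mapsto\{1\}\cup S$) collapses the refined identity to $f_\mu=\frac{1}{p_1}\sum_{\mu'\in H_1}f_{\mu+\mu'}$; since the locus of such $\mu$ is stable under translation by $H_1$, this yields the desired constancy along $H_1$-cosets. Without this $T$-averaging step (or some genuine substitute going beyond the $S$-transform), the induction cannot be closed: your own remark that the averaging identity does not imply the projection identity for arbitrary $\C[A]$-valued functions shows that the inputs assembled at that stage are insufficient.
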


\begin{proof}
Note that the subgroup $H_S\subset A$ is isotropic for all $S\subset \{1,\dots,m\}$.
We prove the statement by induction on $m$. For $m=1$ it is Proposition \ref{prop:3}.

Now assume that $m>1$.
It follows from the transformation behavior \eqref{eq:weils} and the hypothesis on the support of $f$ that
\begin{align*}
f_\mu(-1/\tau) = \tau^k\frac{e(-\sig(A)/8)}{\sqrt{|A|}} \sum_{\nu\in H_1^\perp\cup\dots\cup H_m^\perp} e(-(\mu,\nu)) f_\nu(\tau)
\end{align*}
for $\mu \in A$. We employ Lemma \ref{lem:5} with $C_i=H_i^\perp$. Then we have
$C(S)=\bigcap_{i\in S} H_i^\perp= H_S^\perp$ and therefore
\begin{align*}
f_\mu(-1/\tau) &= \tau^k\frac{e(-\sig(A)/8)}{\sqrt{|A|}} \sum_{\emptyset\neq S\subset\{1,\dots,m\}} (-1)^{|S|+1}
\sum_{\nu\in H_S^\perp} e(-(\mu,\nu)) f_\nu(\tau).
\end{align*}
By means of Lemma \ref{lem:4} we obtain
\begin{align*}
f_\mu(\tau) &=  \sum_{\emptyset\neq S\subset\{1,\dots,m\}} (-1)^{|S|+1}
\frac{1}{|H_S|}\sum_{\mu'\in H_S}  f_{\mu+\mu'}(\tau).
\end{align*}

We now use the transformation behavior \eqref{eq:weilt} under $T$. Since $f_\mu(\tau+a)=e(aQ(\mu)) f_\mu(\tau)$ for $a\in \Z$ and since $H_S$ is isotropic, we find
\begin{align*}
f_\mu(\tau) &=  \sum_{\emptyset\neq S\subset\{1,\dots,m\}} (-1)^{|S|+1}
\frac{1}{|H_S|}\sum_{\mu'\in H_S}  e(a(\mu,\mu'))f_{\mu+\mu'}(\tau).
\end{align*}
If we sum over $a$ modulo the level of $A$, then on the right hand side all terms with $(\mu,\mu')\notin \Z$ cancel. Consequently,
\begin{align*}
f_\mu(\tau) &=  \sum_{\emptyset\neq S\subset\{1,\dots,m\}} (-1)^{|S|+1}
\frac{1}{|H_S|}\sum_{\substack{\mu'\in H_S\\ \mu'\perp\mu}} f_{\mu+\mu'}(\tau).
\end{align*}

Now assume that $\mu \in H_1^\perp$ and $\mu\notin H_i^\perp$ for $i=2,\dots,m$.
Then for $i\in \{2,\dots,m\}$ there is a $\mu_i\in H_i$ with $(\mu_i,\mu)\notin\Z$.
Since $p_i\mu_i=0$, we may assume that $(\mu_i,\mu)\equiv \frac{1}{p_i}\pmod{\Z}$.
%Then $\mu_i$ is a generator of $H_i$.
If $\mu'\in H_S$, then by the Chinese remainder theorem we see that $\mu'\perp \mu$ if and only if
$\mu'\in H_1$. Hence we obtain
\begin{align*}
f_\mu(\tau) &=  \sum_{\emptyset\neq S\subset\{1,\dots,m\}} (-1)^{|S|+1}
\frac{1}{|H_S|}\sum_{\substack{\mu'\in H_S\cap H_1}} f_{\mu+\mu'}(\tau)\\
&=\sum_{\emptyset\neq S\subset\{2,\dots,m\}} (-1)^{|S|+1}
\frac{1}{|H_S|} f_{\mu}(\tau)
+\sum_{1\in S\subset\{1,\dots,m\}} (-1)^{|S|+1}
\frac{1}{|H_S|}\sum_{\substack{\mu'\in  H_1}} f_{\mu+\mu'}(\tau),
\end{align*}
and therefore
\begin{align*}
\sum_{S\subset\{2,\dots,m\}} (-1)^{|S|}
\frac{1}{|H_S|} f_{\mu}(\tau)
=\sum_{1\in S\subset\{1,\dots,m\}} (-1)^{|S|+1}
\frac{1}{|H_S|}\sum_{\substack{\mu'\in  H_1}} f_{\mu+\mu'}(\tau).
\end{align*}
Here in the sum on the left hand side the case $S=\emptyset$ is included (with $H_S=\{0\}$).

There is a bijection between the subsets of $\{2,\dots,m\}$ and the subsets of  $\{1,\dots,m\}$ containing $1$ given by $S\mapsto \{1\}\cup S$.
Under this map we have $p_1|H_S|= |H_{\{1\}\cup S}|$.
Consequently, we obtain
\begin{align}
\label{eq:61}
f_{\mu}(\tau)
=\frac{1}{p_1}\sum_{\substack{\mu'\in  H_1}} f_{\mu+\mu'}(\tau).
\end{align}
Since $\mu \in H_1^\perp$ and $\mu\notin H_i^\perp$ for $i=2,\dots,m$, we also have for every $\mu_1\in H_1$ that $\mu+\mu_1 \in H_1^\perp$ and $\mu+\mu_1\notin H_i^\perp$ for $i=2,\dots,m$. This follows from the fact that $\mu_1\perp H_i$ for $i=1,\dots,m$.
Hence, \eqref{eq:61} implies for such $\mu$ and $\mu_1\in H_1$ that
\begin{align}
\label{eq:62}
f_{\mu+\mu_1}(\tau)=f_\mu(\tau).
\end{align}

Therefore the function
\[
\tilde f = f-\frac{1}{p_1} f\downarrow_{H_1}^A\uparrow_{H_1}^A\in M_{k,A}
\]
is supported on $H_2^\perp \cup\dots \cup H_m^\perp$.
By induction, we have
\[
\tilde f= \sum_{\emptyset\neq S\subset\{2,\dots,m\}} (-1)^{|S|+1} \frac{1}{|H_S|} \tilde f \downarrow_{H_S}^A\uparrow_{H_S}^A.
\]
Substituting the definition of $\tilde f$, we obtain
\begin{align*}
f&=\frac{1}{p_1} f\downarrow_{H_1}^A\uparrow_{H_1}^A +
\sum_{\emptyset\neq S\subset\{2,\dots,m\}} (-1)^{|S|+1} \frac{1}{|H_S|} f \downarrow_{H_S}^A\uparrow_{H_S}^A\\
&\phantom{=}{}+
\sum_{\emptyset\neq S\subset\{2,\dots,m\}} (-1)^{|S|} \frac{1}{p_1|H_S|} f \downarrow_{H_1}^A\uparrow_{H_1}^A \downarrow_{H_S}^A\uparrow_{H_S}^A.
\end{align*}
In the latter summand we note that
\[
\frac{1}{p_1|H_S|} f \downarrow_{H_1}^A\uparrow_{H_1}^A \downarrow_{H_S}^A\uparrow_{H_S}^A=
\frac{1}{|H_{\{1\}\cup S}|} f \downarrow_{H_{\{1\}\cup S}}^A\uparrow_{H_{\{1\}\cup S}}^A.
\]
Inserting this, we see that
\begin{align*}
f&=
\sum_{\emptyset\neq S\subset\{1,\dots,m\}} (-1)^{|S|+1} \frac{1}{|H_S|} f \downarrow_{H_S}^A\uparrow_{H_S}^A,
\end{align*}
concluding the proof of the theorem.
\end{proof}

\subsection{Newform theory for cyclic isotropic subgroups}

\label{sect:3.1}

For a positive integer $d$ we denote by $\Omega(d)$ the number of
prime factors of $d$ counted with multiplicities.  Let $e\in A$ be an
isotropic element of order $N\in \Z_{>0}$.  Then
$(e,A)=\frac{1}{N}\Z\subset \Q/\Z$. For $\lambda\in A$ the residue class
$N(e,\lambda)\in \Z/N\Z$ is well defined. We define the {\em content}
of $\lambda$ with respect to $e$ as
\begin{align}
\cont_e(\lambda):=\gcd (N(e,\lambda),N).
\end{align}
%Let $e'\in A$ such that $(e',e)=\frac{1}{N}+\Z$.
%\texttt{Later: Take $\ell\in L$ primitive isotropic of level $N$, and $\ell'\in L'$ with $(\ell',\ell)=1$. Put $e=\ell/N+L$ and $e'=\ell'+L$.}

For any divisor $d\mid N$, we consider the isotropic subgroup
\begin{align}
\label{eq:Id}
I_d=\langle \frac{N}{d}e\rangle\subset A
\end{align}
of order $d$. Its orthogonal complement is given by
\[
I_d^\perp= \{\lambda\in A:\quad d\mid N(e,\lambda)\in \Z/N\Z\}=\{\lambda\in A:\;d\mid \cont_e(\lambda) \}.
\]
We put $A(d)=I_d^\perp/I_d$. Then $|A|=d^2|A(d)|$.
%\texttt{Define content $\cont_e(\lambda):=\gcd (N(e,\lambda),N)=1$???}

\begin{comment}
\begin{proposition}
\label{prop:7}
Let $N'\in \Z_{>0}$ be a divisor of $N$. Assume that $f \in M_{k,A}$ is supported on
\[
\bigcup_{\substack{p\mid N'\\ \text{$p$ prime}}} I_p^\perp,
\]
i.e., $f_\lambda= 0 $ for all $\lambda\in A$ with $(\lambda,\frac{N}{N'}e)\notin\Z$.
Then
\[
f=-\sum_{1<d\mid N'} \mu(d)\frac{1}{d} f\downarrow_{I_d}^A\uparrow_{I_d}^A.
\]
Here $\mu$ denotes the Moebius function.
\end{proposition}
\end{comment}

\begin{proposition}
\label{prop:7}
%Let $N'\in \Z_{>0}$ be a divisor of $N$.
Assume that $f =\sum_{\lambda\in A}f_\lambda \frake_\lambda\in M_{k,A}$ is supported on
\[
\bigcup_{\substack{p\mid N\\ \text{$p$ prime}}} I_p^\perp,
\]
that is, $f_\lambda= 0 $ for all $\lambda\in A$ with $\cont_e(\lambda)=1$.
%$\gcd (N(e,\lambda),N)=1$.
Then
\[
f=-\sum_{1<d\mid N} \mu(d)\frac{1}{d} f\downarrow_{I_d}^A\uparrow_{I_d}^A.
\]
Here $\mu$ denotes the Moebius function.
\end{proposition}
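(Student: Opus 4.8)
The plan is to deduce this directly from Theorem~\ref{thm:6}. Let $p_1,\dots,p_m$ be the distinct primes dividing $N$, and for each $i$ set $H_i:=I_{p_i}=\langle\frac{N}{p_i}e\rangle$. Since $e$ is isotropic we have $Q\bigl(k\tfrac{N}{p_i}e\bigr)=k^2(\tfrac{N}{p_i})^2Q(e)=0$ in $\Q/\Z$ for all $k$, so each $H_i$ is an isotropic subgroup of prime order $p_i$, and the orders are pairwise distinct. Next I would match the support hypotheses: by the description of $I_p^\perp$ given before the statement, $\lambda\in I_p^\perp$ for some prime $p\mid N$ if and only if $p\mid\cont_e(\lambda)$ for some such $p$, which happens exactly when $\cont_e(\lambda)>1$. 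Thus the assumption $f_\lambda=0$ whenever $\cont_e(\lambda)=1$ says precisely that $f$ is supported on $H_1^\perp\cup\dots\cup H_m^\perp$, so Theorem~\ref{thm:6} applies.

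The key step is to identify the subgroups $H_S=\sum_{i\in S}H_i$ that occur in the conclusion of Theorem~\ref{thm:6}. All the $H_i$ lie inside the cyclic group $\langle e\rangle\cong\Z/N\Z$, in which $\frac{N}{p_i}e$ corresponds to the residue $N/p_i$. Hence for nonempty $S\subset\{1,\dots,m\}$ the subgroup $H_S$ is generated by $\gcd\bigl(N/p_i:i\in S\bigr)$. A prime-by-prime comparison of valuations shows that this gcd equals $N/d_S$, where $d_S:=\prod_{i\in S}p_i$: for a prime $p_k$ with $k\in S$ the minimal exponent is attained at $i=k$ and equals $a_k-1$, while for $k\notin S$ every $N/p_i$ carries the full exponent $a_k$. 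Therefore $H_S=\langle\frac{N}{d_S}e\rangle=I_{d_S}$ and $|H_S|=d_S$. Substituting this into the formula of Theorem~\ref{thm:6} gives
\[
f=\sum_{\emptyset\neq S\subset\{1,\dots,m\}}(-1)^{|S|+1}\frac{1}{d_S}\,f\downarrow_{I_{d_S}}^A\uparrow_{I_{d_S}}^A.
\]

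Finally I would reindex the sum. The assignment $S\mapsto d_S$ is a bijection from the nonempty subsets of $\{1,\dots,m\}$ onto the squarefree divisors $d>1$ of $N$, under which $|S|$ equals the number of prime factors of $d_S$, so that $(-1)^{|S|+1}=-\mu(d_S)$. This rewrites the displayed sum as $-\sum\mu(d)\frac1d\,f\downarrow_{I_d}^A\uparrow_{I_d}^A$ over squarefree $1<d\mid N$, and since $\mu(d)=0$ for non-squarefree $d$ the range may be harmlessly extended to all divisors $d>1$ of $N$, which is exactly the asserted identity. The only point demanding genuine care is the computation $H_S=I_{d_S}$ and $|H_S|=d_S$; once that is recorded, the remainder is routine bookkeeping with the M\"obius function, so I do not expect a serious obstacle.
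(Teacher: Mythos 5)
Your proof is correct and follows essentially the same route as the paper: reduce to Theorem~\ref{thm:6} with $H_i=I_{p_i}$, identify $H_S=I_{d_S}$ with $|H_S|=d_S=\prod_{i\in S}p_i$, and reindex the sum over nonempty subsets as a M\"obius sum over squarefree divisors $d>1$ of $N$. The only difference is that you spell out the valuation computation showing $\sum_{i\in S}\langle\frac{N}{p_i}e\rangle=\langle\frac{N}{d_S}e\rangle$, which the paper asserts without detail.
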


\begin{proof}
We reduce the statement to Theorem \ref{thm:6} as follows.
Let $m$ be the number of distinct prime divisors of $N$, and let $p_1,\dots,p_m$ be the distinct primes dividing $N$.
Then $H_i:=I_{p_i}\subset A$ is an isotropic subgroup of prime order $p_i$.
For $S\subset\{ 1,\dots ,m\}$ we have
\[
H_S= \sum_{i\in S} H_i = \sum_{i\in S}I_{p_i} = I_d,
\]
where $d=|H_S|=\prod_{i\in S}p_i$. As $S$ runs through the non-empty subsets of $\{1,\dots,m\}$, the quantity $d  =|H_S|$ runs through the square-free non-trivial divisors of $N$.
Moreover, we have $(-1)^{|S|}= \mu(d)$. This proves the proposition.
\end{proof}

\begin{definition}
We define the subspace of oldforms in $M_{k,A}$ with respect to the cyclic isotropic subgroup $I_N=\langle e\rangle$ of $A$ to be
\[
M_{k,A}^{old}= \sum_{p\mid N} M_{k,A(p)} \uparrow_{I_p}^A.
\]
We define the space of newforms with respect to the cyclic isotropic subgroup $I_N$ to be the orthogonal complement of $M_{k,A}^{old}$.
\end{definition}

\begin{corollary}
\label{cor:9}
We have
\[
M_{k,A}^{old}= \left\{\text{$f\in M_{k,A}$}:\; \text{$f_\lambda= 0 $ for all $\lambda\in A$ with $\cont_e(\lambda)=1$}
% \gcd (N(e,\lambda),N)=1$
\right\}.
\]
\end{corollary}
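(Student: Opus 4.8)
The plan is to prove the two inclusions separately, relying only on Propositions \ref{prop:1}, \ref{prop:2}, \ref{prop:3} and \ref{prop:7}. Throughout I use that $\lambda\in I_p^\perp$ if and only if $p\mid\cont_e(\lambda)$, so that $\cont_e(\lambda)=1$ holds exactly when $\lambda$ lies outside $\bigcup_{p\mid N}I_p^\perp$.

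For the inclusion ``$\subseteq$'', which is the easy direction, I would take a generator $g\uparrow_{I_p}^A$ of $M_{k,A}^{old}$ with $g\in M_{k,A(p)}$. By Proposition \ref{prop:1} it is supported on $I_p^\perp$, hence on $\bigcup_{p\mid N}I_p^\perp$. Any sum of such forms is again supported on this union, so every element of $M_{k,A}^{old}$ has $f_\lambda=0$ whenever $\cont_e(\lambda)=1$.

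For the reverse inclusion I would start from a form $f\in M_{k,A}$ supported on $\bigcup_{p\mid N}I_p^\perp$ and apply Proposition \ref{prop:7} to write
\[
f=-\sum_{1<d\mid N}\mu(d)\frac{1}{d}\,f\downarrow_{I_d}^A\uparrow_{I_d}^A.
\]
It then suffices to show that each summand lies in $M_{k,A}^{old}$. Since $\mu(d)=0$ for non-squarefree $d$, I may assume $d>1$ is squarefree and fix a prime divisor $p\mid d$. From $I_p=\langle\frac{N}{p}e\rangle\subseteq\langle\frac{N}{d}e\rangle=I_d$ we get $I_d^\perp\subseteq I_p^\perp$, so by Proposition \ref{prop:1} the form $g_d:=f\downarrow_{I_d}^A\uparrow_{I_d}^A$ is supported on $I_d^\perp$ and hence on $I_p^\perp$.

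The key step is now to recognize $g_d$, which a priori is induced from the composite subgroup $I_d$, as a form induced from the prime-order subgroup $I_p$. Since $g_d$ is supported on $I_p^\perp=H^\perp$ with $H=I_p$, Proposition \ref{prop:3} applies and gives $g_d=\frac{1}{p}\,g_d\downarrow_{I_p}^A\uparrow_{I_p}^A$; as $g_d\downarrow_{I_p}^A\in M_{k,A(p)}$ by Proposition \ref{prop:2}, this exhibits $g_d\in M_{k,A(p)}\uparrow_{I_p}^A\subseteq M_{k,A}^{old}$. Summing over $d$ yields $f\in M_{k,A}^{old}$. I do not expect a genuine obstacle here; the only slightly subtle point is to invoke the converse statement of Proposition \ref{prop:3} for the auxiliary forms $g_d$, which lets me descend from $I_d$ to $I_p$ directly and avoids having to establish any transitivity property for the maps $\uparrow_{I_d}^A$.
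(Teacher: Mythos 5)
Your proof is correct and follows the route the paper intends: the paper states Corollary \ref{cor:9} as an immediate consequence of Proposition \ref{prop:1} (for the inclusion $\subseteq$) and Proposition \ref{prop:7} (for $\supseteq$), exactly as you do. The one step the paper leaves implicit --- that each term $f\downarrow_{I_d}^A\uparrow_{I_d}^A$ with $d$ composite squarefree lies in $\sum_{p\mid N}M_{k,A(p)}\uparrow_{I_p}^A$ --- you settle cleanly via Proposition \ref{prop:3} applied with $H=I_p$ for a prime $p\mid d$, which is a legitimate alternative to the transitivity of the $\uparrow$ maps that the paper uses elsewhere (in the proof of Theorem \ref{thm:8}).
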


We now give a refinement of Proposition \ref{prop:7}.

\begin{theorem}
\label{thm:8}
Let $t\in \Z_{\geq 0}$.
Assume that $f =\sum_{\lambda\in A}f_\lambda \frake_\lambda\in M_{k,A}$ is supported on
\begin{align}
\label{eq:suppass}
\bigcup_{\substack{d\mid N\\ \Omega(d)=t}} I_d^\perp,
\end{align}
that is, $f_\lambda= 0 $ for all $\lambda\in A$ unless $\Omega(\cont_e(\lambda))\geq t$.
%$\Omega((N(e,\lambda),N))\geq t$.
%\texttt{Define the `height' $\height_e(\lambda)=\Omega(\gcd(N(e,\lambda),N))\in \{0,\dots,\Omega(N)\}$ here???}
Then there exist modular forms $f_d\in M_{k,A(d)}$ such that
\begin{align}
\label{eq:fid}
f=\sum_{\substack{d\mid N\\ \Omega(d)\geq t}} f_d\uparrow_{I_d}^A.
\end{align}
\end{theorem}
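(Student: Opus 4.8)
The plan is to argue by induction on $t$, using Proposition~\ref{prop:7} as the main engine and a multiplicativity property of the content function to drive the recursion. The base case $t=0$ is immediate: the support hypothesis is then vacuous, and since $I_1=\{0\}$, $A(1)=A$, and $\uparrow_{I_1}^A$ is the identity map, I may simply take $f_1=f\in M_{k,A(1)}$.

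Two structural facts will do the work. First, I would record that the content behaves multiplicatively under passage to $A(d)$: writing $\bar e=e+I_d$ for the induced isotropic element of $A(d)$ (which has order $N/d$) and $\bar\lambda=\lambda+I_d$ for $\lambda\in I_d^\perp$, one has $\cont_e(\lambda)=d\cdot\cont_{\bar e}(\bar\lambda)$. This holds because $\lambda\in I_d^\perp$ forces $N(e,\lambda)=db$ for some integer $b$, the induced pairing gives $\tfrac{N}{d}(\bar e,\bar\lambda)\equiv b\pmod{N/d}$, and $\gcd(db,N)=d\gcd(b,N/d)$ as $d\mid N$. Since $\Omega$ is completely additive, this yields $\Omega(\cont_e(\lambda))=\Omega(d)+\Omega(\cont_{\bar e}(\bar\lambda))$. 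Second, I would check the transitivity of the induction map: if $I_d\subset I_{dd'}$ and $J_{d'}=I_{dd'}/I_d\subset A(d)$ denotes the cyclic isotropic subgroup generated by the image of $\tfrac{N}{dd'}e$, then under the natural identification $A(d)(d')\cong A(dd')$ one has $h\uparrow_{J_{d'}}^{A(d)}\uparrow_{I_d}^A=h\uparrow_{I_{dd'}}^A$; this is verified directly on Fourier components.

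For the inductive step with $t\geq 1$, note that $f$ is in particular supported on $\bigcup_{p\mid N}I_p^\perp$, so Proposition~\ref{prop:7} gives $f=-\sum_{1<d\mid N}\mu(d)\tfrac1d\,f\downarrow_{I_d}^A\uparrow_{I_d}^A$, a sum over squarefree $d$. I keep the terms with $\Omega(d)\geq t$ unchanged. For a squarefree $d$ with $1\leq\Omega(d)<t$, put $g_d=f\downarrow_{I_d}^A\in M_{k,A(d)}$; by the multiplicativity fact and the definition of $\downarrow_{I_d}^A$, the form $g_d$ is supported on $\{\bar\lambda\in A(d):\Omega(\cont_{\bar e}(\bar\lambda))\geq t-\Omega(d)\}$. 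Since $t-\Omega(d)<t$, the induction hypothesis applies to $g_d$ on the smaller discriminant form $A(d)$ with distinguished isotropic element $\bar e$, giving $g_d=\sum_{d'\mid N/d,\ \Omega(d')\geq t-\Omega(d)}h_{d'}\uparrow_{J_{d'}}^{A(d)}$ with $h_{d'}\in M_{k,A(d)(d')}=M_{k,A(dd')}$. Applying $\uparrow_{I_d}^A$ and the transitivity fact turns each such contribution into $\sum_{d'}h_{d'}\uparrow_{I_{dd'}}^A$, where $\Omega(dd')=\Omega(d)+\Omega(d')\geq t$. Collecting all resulting terms and grouping them by their induction index $e$ then expresses $f=\sum_{\Omega(e)\geq t}f_e\uparrow_{I_e}^A$ with $f_e\in M_{k,A(e)}$, as required.

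I expect the main obstacle to be the multiplicativity identity $\cont_e(\lambda)=d\cdot\cont_{\bar e}(\bar\lambda)$ together with pinning down the identification $A(d)(d')\cong A(dd')$ precisely enough that the transitivity of $\uparrow$ holds on the nose; these are exactly what let the support hypothesis descend correctly to the quotient and the induction close. Once they are in place the combinatorics is routine, and the precise values of the coefficients $\mu(d)/d$ are irrelevant, since only the existence of the $f_e$ is asserted.
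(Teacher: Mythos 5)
Your proof is correct, but it organizes the induction in the opposite direction from the paper, so the two are worth comparing. Both arguments rest on Proposition~\ref{prop:7} together with induction on $t$, and both ultimately need the same two structural facts you isolate: the content multiplicativity $\cont_e(\lambda)=d\cdot\cont_{\bar e}(\bar\lambda)$ for $\lambda\in I_d^\perp$, and the transitivity $h\uparrow_{J_{d'}}^{A(d)}\uparrow_{I_d}^A=h\uparrow_{I_{dd'}}^A$ under $A(d)(d')\cong A(dd')$ (both of which check out as you sketch them). The difference: you apply Proposition~\ref{prop:7} to $f$ at the outset and then invoke the theorem itself, by strong induction, on the descended forms $f\downarrow_{I_d}^A$ with $1\leq\Omega(d)<t$, which live on the smaller discriminant forms $A(d)$ and satisfy the support hypothesis at level $t-\Omega(d)$; this forces the induction to range over all pairs (discriminant form, isotropic element) simultaneously, which is harmless since the statement is uniform in that data. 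The paper instead keeps the induction inside the fixed form $A$, stepping from $t-1$ to $t$: it applies the induction hypothesis at level $t-1$ to $f$ itself, then shows each boundary term $g_{d_0}$ with $\Omega(d_0)=t-1$ is supported away from the content-one elements of $A(d_0)$ — this requires a small separation argument, namely that a lift $\mu\in I_{d_0}^\perp$ of a content-one element $\nu\in A(d_0)$ has $\cont_e(\mu)=d_0$ and hence lies in no other $I_d^\perp$ with $\Omega(d)\geq t-1$, so that $(g_{d_0})_\nu=f_\mu=0$ by the level-$t$ hypothesis on $f$ — and only then applies Proposition~\ref{prop:7} on $A(d_0)$ to push each such term up by one more prime. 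Your route trades that separation argument for the stronger (cross-discriminant-form) induction hypothesis and handles all terms with small $\Omega(d)$ in one stroke; the paper's route keeps the induction scheme minimal but needs the uniqueness-of-$d_0$ step. Both uses are legitimate, and in both cases the precise coefficients coming from Proposition~\ref{prop:7} are irrelevant, exactly as you note.
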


\begin{proof}
We prove the proposition by induction on $t$.
For $t=0$ there is nothing to show.
For $t=1$ the assertion follows from Proposition \ref{prop:7}.

Now assume that $t>1$. The assumption \eqref{eq:suppass} on the support of $f$ for $t$ implies that $f$ is a fortiori supported on
\begin{align*}
\bigcup_{\substack{d\mid N\\ \Omega(d)=t-1}} I_d^\perp.
\end{align*}
By induction, there exist modular forms $g_d\in M_{k,A(d)}$ such that
\begin{align}
\label{eq:fid2}
f=\sum_{\substack{d\mid N\\ \Omega(d)\geq t-1}} g_d\uparrow_{I_d}^A.
\end{align}

Let $d_0$ be a divisor of $N$ with $\Omega(d_0)=t-1$. We claim that
 $g_{d_0}$ is an oldform in $M_{k,A(d_0)}$ with respect to the cyclic subgroup
$\langle e+I_{d_0}\rangle \subset A(d_0)$ of order $N_0:=N/d_0$.
In fact, let $\nu\in A(d_0)$ with
$\cont_{e+I_{d_0}}(\nu)=1$.
%$(N_0(e+I_{d_0},\nu),N_0)=1$.
Then, if $\mu\in I_{d_0}^\perp$ with $\mu\mapsto \nu$ under the natural map
$I_{d_0}^\perp\to A(d_0)$, we have
$\cont_e(\mu)=d_0$.
%$d_0\mid N(e,\mu)$ and $(N_0(e,\mu),N_0)=1$.
Therefore $\mu$ does not belong to $I_d^\perp$ for any $d\mid N$ different from $d_0$ with
$\Omega(d)\geq t-1$.
Hence, identity \eqref{eq:fid2} implies that
\begin{align*}
(g_{d_0})_\nu &= (g_{d_0}\uparrow_{I_{d_0}}^A)_\mu\\
&=\sum_{\substack{d\mid N\\ \Omega(d)\geq t-1}} (g_d\uparrow_{I_d}^A)_\mu\\
&=f_\mu .
\end{align*}
Since $\Omega(d_0)=t-1$, we have by the hypothesis on $f$ that $f_\mu=0$, and therefore $   (g_{d_0})_\nu=0$.
Therefore, $g_{d_0}$ is an oldform in $M_{k,A(d_0)}$ with respect to the subgroup
$\langle e+I_{d_0}\rangle \subset A(d_0)$.

Consequently, for  any $d\mid N$ with $\Omega(d)=t-1$ we find by Corollary \ref{prop:7} that there exist modular forms $g_{d,p}\in M_{k,A(dp)}$ such that
\[
g_d= \sum_{\substack{p\mid N/d\\ \text{$p$ prime}}} g_{d,p}\uparrow_{J_p}^{A(d)}.
\]
Here $J_p$ denotes the isotropic subgroup
\[
J_p=\langle \frac{N}{dp}e+I_d\rangle\subset A(d)
\]
of order $p$. Note that $J_p^\perp/J_p\cong A(dp)$ and
\[
g_d\uparrow_{I_d}^A= \sum_{\substack{p\mid N/d\\ \text{$p$ prime}}} g_{d,p}\uparrow_{I_{dp}}^A.
\]
Substituting this into \eqref{eq:fid2}, we obtain the assertion for $t$.
\end{proof}

\begin{remark}
\label{rem:9}
Let $t\in \Z_{\geq 0}$.
Assume that $f =\sum_{\lambda\in A}f_\lambda \frake_\lambda\in M_{k,A}$ is given by
\begin{align*}
f=\sum_{\substack{d\mid N\\ \Omega(d)\geq t}} f_d\uparrow_{I_d}^A.
\end{align*}
for some  $f_d\in M_{k,A(d)}$ as in Theorem \ref{thm:8}.
Let $d_0\mid N $ with $\Omega(d_0)=t$,
and let $\mu\notin I_d^\perp$ for all $d\mid N$, $d\neq d_0$ with $\Omega(d)\geq t$.
Then $f_\mu = (f_{d_0}  \uparrow_{I_{d_0}}^A)_\mu$ and $f_{\mu+\mu'}=f_\mu$ for all $\mu'\in I_{d_0}$.
\end{remark}

\section{Modular varieties and special cycles}
\label{sect:4}

%Here we recall some facts on Borcherds products in a setup which is convenient for the present paper. See \cite{Bo1}, \cite{Bo2} for details.

%\texttt{Modular varieties and modular forms}

Let $(V,Q)$ be a rational quadratic space of signature $(n,2)$ and let $\Orth(V)$ be its orthogonal group viewed as an algebraic group over $\Q$. We realize the corresponding hermitian symmetric space as the Grassmannian
\begin{align}
\D= \{ z\subset V(\R):\; \text{$\dim(z)=2$ and $Q\mid_z<0$}\}
\end{align}
of negative definite oriented subspaces of $V(\R)$ of dimension $2$.
Note that $\D$ has two connected components given by the two possible choices of an orientation of $z\subset V(\R)$. We fix one component and denote it by $\D^+$.
The group $\Orth(V)(\R)$ acts transitively on $\D$. A subgroup $\Orth(V)(\R)^+$ of index $2$ (the subgroup of elements $\Orth(V)(\R)$ whose spinor norm has the same sign as the determinant)
acts transitively on $\D^+$.

The complex structure on $\D$ is most easily realized as follows. We extend the bilinear form on $V$ to a $\C$-bilinear form on $V(\C)=V\otimes_\Q\C$.
The open subset
\begin{align}
\calK=\{ [Z]\in P(V(\C)):\; \text{$(Z,Z)=0$ and $(Z,\bar Z)<0$}\}
\end{align}
of the zero quadric of the projective space $P(V(\C))$ of $V(\C)$ is isomorphic to $\D$ by mapping $[Z]$ to the subspace $\R\Re(Z)+\R\Im(Z)\subset V(\R)$ with the appropriate orientation.

We choose an isotropic vector $\ell\in V$ and a vector $\ell'\in V$ such that $(\ell,\ell')=1$. The rational quadratic space
$V_0:=V\cap \ell^\perp\cap \ell'{}^\perp$ has signature $(n-1,1)$. The tube domain
\begin{align}
\label{eq:td}
\calH=\calH_{\ell,\ell'}=\{ z\in V_0\otimes_\Q\C:\; \text{$Q(\Im(z))<0$}\}
\end{align}
is isomorphic to $\calK$ by mapping $z\in \calH$ to the class in $P(V(\C))$ of
\begin{align*}
w(z)=z+\ell'-\left(Q(z)-Q(\ell')\right)\ell.
\end{align*}
%The domain $\calH$ can be viewed as a generalized complex upper  half plane.
The linear action of $\Orth(V)(\R)$ on $V(\C)$ induces an action on $\calH$ by fractional linear transformations.
If $\gamma\in \Orth(V)(\R)$, we have
$\gamma w(z) = j(\gamma, z) w(\gamma z)$
for an automorphy factor $j(\gamma,z)=(\gamma w(z), \ell)$.
We write $\calK^+$ and $\calH^+$ for the connected components of $\calK$ and $\calH$, respectively,  corresponding to $\D^+$ under the above isomorphisms.

\begin{comment}
The function
\begin{align}
\label{eq:pet1}
Z\mapsto -\frac{1}{2}(Z,\bar Z)=-(Y,Y)=:|Y|^2
\end{align}
on $V(\C)$ defines a hermitian metric on the tautological line bundle $\calL$ over $\calK$, where $Y=\Im(Z)$.
Its first Chern form
\begin{align}
\label{eq:omega}
\Omega=-dd^c\log|Y|^2
\end{align}
is $\Orth(V)(\R)$-invariant and positive.  It corresponds to an invariant K\"ahler metric on $\D\cong \calK$.
%The corresponding invariant volume form  is given by $\frac{1}{m!}\Omega^m$.
%and gives rise to an invariant volume form $d\mu(z)=\Omega^n$.
Note that for $z\in \calH$ we have
\begin{align}
-\frac{1}{2}(w(z),\overline{w(z)})&=-(\Im(z),\Im(z)),\\
|\Im(\gamma z)|^2&= |j(\gamma,z)|^{-2}|\Im(z)|^2 .
\end{align}
\end{comment}

Let $L\subset V$ be an even lattice. Let $L'$ be its dual and write $A=L'/L$ for the discriminant group. We denote by $\Orth(L)$ the orthogonal group of $L$ and put $\Orth(L)^+=\Orth(L)\cap \Orth(V)(\R)^+$. % This is an arithmetic subgroup of $\Orth(V)(\R)$ which acts on $\D^+$ with finite covolume.
The kernel $\Gamma=\Gamma(L)$ of the natural map
\[
\Orth(L)^+\longrightarrow \Aut(A)
\]
is called the {\em discriminant kernel} subgroup of $\Orth(L)^+$.
%Throughout we let $\Gamma\subset \Orth(L)^+$ be a normal subgroup, which is contained in $\Gamma(L)$.
We consider the modular variety
$X_\Gamma = \Gamma\bs \D^+.$
By the theory of Baily-Borel, it carries the structure of a quasi-projective algebraic variety.

A meromorphic modular form of weight $k\in \Z$ for $\Gamma$ is a meromorphic function $\Psi$ on
$\calH^+$ which satisfies
%\begin{enumerate}
%\item[(i)]
$\Psi(\gamma z)=j(\gamma,z)^k\Psi(z)$ for all $\gamma\in \Gamma$ and which is
%\item[(ii)] $\Psi$ is
meromorphic  at the boundary.
%\end{enumerate}
%
%The last condition is trivially fulfilled if $V$ is anisotropic over $\Q$. By the Koecher principle, it is also automatically fulfilled if the Witt rank of $V$ over $\Q$ (i.e.~the dimension of a maximal totally isotropic subspace over $\Q$) is smaller than $n$.
The transformation law can be relaxed by allowing characters or multiplier systems, see e.g.~\cite{Br2}, Chapter 3.3.

%Modular forms of weight $w$ can be viewed as global sections of the
%coherent sheaf
%$\calM_w$
%of modular forms of weight $w$.
Recall that the Petersson norm of a modular form $\Psi$ of weight $k$ is given by
\begin{align*}
\|\Psi(z)\|_{Pet}= |\Psi(z)|\cdot |y|^{k},
\end{align*}
where $|y|^k=|(\Im(y),\Im(y))|^{k/2}$. Since
$|\Im(\gamma z)|^2= |j(\gamma,z)|^{-2}|\Im(z)|^2$, the Petersson norm
defines a $\Gamma$-invariant function on $\calH^+$.
The differential form
\[
\Omega = -dd^c\log |y|^2
\]
on $\calH^+$ is invariant under $\Orth(V)(\R)^+$ and positive. It corresponds to the  invariant K\"ahler metric on $\calH^+$, which unique up to a positive scalar factor. Moreover, it is the first Chern form of the sheaf of modular forms of weight $1$.

%The first Chern form of the line bundle $\calM_w$ with the Petersson metric is $w\Omega$.
%where $y=\Im(z)$.
%It defines a function on $X_K$.

There are special divisors on $X_\Gamma$, given by quadratic subspaces of $V$ of signature $(n-1,2)$, see e.g.~\cite{Bo2}, \cite{Ku:Duke}, \cite{GN}.
If $\lambda\in V$ is a vector of positive norm, then
\[
Z(\lambda)= \{ z\in \D:\; z\perp \lambda\}
\]
defines an analytic  divisor on $\D$. For $\mu\in L'/L$ and $m\in \Q_{>0}$ the special divisor of discriminant $(m,\mu)$ is
given by
\begin{align}
\label{eq:heeg}
Z(m,\mu)=\sum_{\substack{\lambda\in L+\mu\\ Q(\lambda)=m}} Z(\lambda).
\end{align}
It is a $\Gamma$-invariant divisor on $\D$. Since $\Gamma$ acts on the vectors of fixed norm in $L'$ with finitely many orbits, $Z(m,\mu)$ descends to an algebraic divisor on $X_\Gamma$.
Note that $Z(m,\mu)=0$ if $m\notin Q(\mu)+\Z$, and that $Z(m,\mu)=Z(m,-\mu)$.

%\texttt{Borcherds' Theorem}

In the present paper we are interested in those meromorphic modular forms for the group
$\Gamma$ which are obtained as Borcherds lifts of weakly holomorphic modular forms as described in the introduction,
see Theorem \ref{thm:bo}. Their zeros and poles lie on
special
divisors.
Note that the Borcherds lift is equivariant with respect to the actions of $\Orth(L)^+$ on $M_{1-n/2,L^-}^!$ and
on meromorphic modular forms for $\Gamma$.

%Therefore,  $\Psi(z,f)$ is actually modular for the stabilizer in $\Orth(L)^+$ of the weakly holomorphic modular form $f$.

\begin{comment}
The idea of the proof is to construct the logarithm of  $\|\Psi(z,f)\|_{Pet}$ by means of a regularized theta lift.
For $\tau=u+iv\in \H$ and $z\in \D$ we define the Siegel theta function of the lattice $L$ by
\[
\Theta_L(\tau,z)= v\sum_{\lambda\in L'} e\big( Q(\lambda_{z^\perp})\tau +Q(\lambda_z)\bar\tau\big)\frake_\lambda.
\]
In the variable $z$ it is a $\Gamma$-invariant function, and in $\tau$
it transforms as a non-holomorphic modular form of weight $n/2-1$ with
representation $\rho_A$ for $\Mp_2(\Z)$.  For $f\in M_{1-n/2,A^-}^!$
we consider the theta integral
\[
\Phi(z,f)=\int_{\Mp_2(\Z)\bs \H}^{reg}\langle f(\tau),\overline{\Theta_L(\tau,z)}\rangle \,\frac{du\,dv}{v^2},
\]
where the integral has to be regularized as in \cite{Bo2}, \cite{BF}.
It can be proved that $\Phi(z,f)=-4\log\|\Psi(z,f)\|_{Pet}+C$ for a constant $C$.
Then the properties of the theta lift can be used to derive the theorem.
\end{comment}

\subsection{Chern classes of special divisors and the converse theorem}
\label{sect:5}

Throughout we put $\kappa=1+n/2$.
Here we consider the question whether there is a converse to Borcherds' Theorem:
%\begin{question}
%\label{qestion:1}
Assume that $F$ is a meromorphic modular
form for the group $\Gamma$ whose zeros and poles are supported on special divisors, that is,
\begin{align}
\label{eq:divf}
\dv(F)=\frac{1}{2}\sum_\mu\sum_{m>0} c(-m,\mu) Z(m,\mu).
\end{align}
%(where $c(n,h)=c(n,-h)$ without loss of generality).
Is there a weakly holomorphic form $f\in M^!_{2-\kappa,L^-}$ whose Borcherds lift $\Psi(z,f)$ as in Theorem~\ref{thm:bo}
is equal to $F$?
%\end{question}
We recall and refine the approach to this question
developed in \cite{Br1}.

%\texttt{The lift $\Lambda$, briefly: connection to KM-lift as in \cite{BF} }

Let $\calH^{1,1}(X_\Gamma)$ be the space of square integrable harmonic
differential forms of Hodge type $(1,1)$ on $X_\Gamma$.  Recall from
\cite[Chapter 5.1]{Br1} that there is a linear map $S_{\kappa,L}\to
\calH^{1,1}(X_\Gamma)$, which can be obtained from the regularized
theta lift on harmonic Maass forms as follows.  For $\tau=u+iv\in \H$
and $z\in \D$ we define the Siegel theta function associated with the
lattice $L$ by
\begin{align}
\label{eq:c1}
\Theta_L(\tau,z)= v\sum_{\lambda\in L'} e\big( Q(\lambda_{z^\perp})\tau +Q(\lambda_z)\bar\tau\big)\frake_{\lambda+L}.
\end{align}
In the variable $z$ it is a $\Gamma$-invariant function, and in $\tau$
it transforms as a non-holomorphic modular form of weight $\kappa-2$ with
representation $\rho_L$ for $\Mp_2(\Z)$.

Let $f\in H_{2-\kappa,L^-}$ and denote the Fourier coefficients of $f$ by $c^\pm(m,\mu)$ as in \eqref{weakmaass}.
We consider the theta integral
\begin{align}
\label{eq:c2}
\Phi(z,f)=\int_{\Mp_2(\Z)\bs \H}^{reg}\langle f(\tau),\overline{\Theta_L(\tau,z)}\rangle \,\frac{du\,dv}{v^2},
\end{align}
where the integral has to be regularized as in \cite{Bo2}, \cite{BF}. It turns out that
$\frac{1}{2}\Phi(z,f)$ is a logarithmic Green function for the divisor
\begin{align}
\label{eq:defzf}
Z(f)= \frac{1}{2}\sum_{\mu\in L'/L}\sum_{m>0} c^+(-m,\mu) Z(m,\mu)
\end{align}
on $X_\Gamma$, see \cite[Theorem 2.12]{Br1}. Moreover, the differential form
$dd^c\Phi(z,f)$ can be continued to a smooth square integrable harmonic $(1,1)$-form.

If $f$ is actually weakly holomorphic, one can show that
$dd^c\Phi(z,f)=c^+(0,0)\Omega$, see e.g.~\cite[Theorem 6.1]{BF}.
This implies that there is a
meromorphic modular form $\Psi(z,f)$ for $\Gamma$ as in Theorem \ref{thm:bo} such that
 $-4\log\|\Psi(z,f)\|_{Pet}$ is up to a constant equal to $\Phi(z,f)$.
(This can be actually used to prove Theorem \ref{thm:bo} up to the infinite product expansion.)
Hence, if $g\in S_{\kappa,L}$, we can pick a harmonic Maass form $f\in H_{2-\kappa,L^-}$ with vanishing constant term $c^+(0,0)$ such that $\xi(f)=g$, and define
\[
\Lambda(g,z)=dd^c\Phi(z,f).
\]
We obtain a well defined
linear map $\Lambda: S_{\kappa,L}\to \calH^{1,1}(X_\Gamma)$. Alternatively, $\Lambda$ can be constructed by integrating $g$ against the Kudla-Millson theta function \cite{KM3} associated to $L$,
%and adding a suitable multiple of the K\"ahler form $\Omega$,
see \cite[Theorem 6.1]{BF}.

To describe the map $\Lambda$ in terms of Fourier expansions,
we view the elements of $\calH^{1,1}(X_\Gamma)$ as $\Gamma$-invariant differential forms in a tube domain model for $\D^+$. To this end, let $\ell\in L$ be a primitive isotropic vector and let $\ell'\in L'$ such that $(\ell,\ell')=1$. Let $V_0=V\cap \ell^\perp\cap \ell'{}^\perp$, and write $\calH$ for the corresponding tube domain realization of $\D$ as in \eqref{eq:td}.
Note that the lattice $L\cap V_0$ is isometric to $K=(L\cap\ell^\perp)/\Z\ell$.
(Warning: in general $K'$ is not contained in $L'$.)
For $\delta\in L'$, we denote by $\delta\mid L\cap \ell^\perp$ the restriction of $\delta\in\Hom(L,\Z)$ to $L\cap \ell^\perp$. We consider $\gamma\in K'$ as an element of $\Hom(L\cap\ell^\perp,\Z)$ via the quotient map $ L\cap\ell^\perp\to K$.

Following \cite[(3.25)]{Br1}, for $a,b\in \R$ we define the special function
\begin{align}
\label{DefV}
\calV_\kappa(a,b) = \int\limits_0^\infty \Gamma(\kappa-1, a^2 y) e^{-b^2 y -1/y}y^{-3/2}\,dy.
\end{align}
Then for $z=x+iy\in \calH^+$ and for $\lambda\in K'$, the differential form
\begin{align*}
dd^c \calV_{\kappa} \left( \pi  |\lambda| |y|,\,\pi  (\lambda,y)\right) e((\lambda,x))
\end{align*}
is harmonic and invariant under translations by $K$. It is exponentially decreasing in $a$ and $b$,
see \cite[Section 3.2]{Br1}.

\begin{theorem}
\label{thm:lift}
The map $\Lambda: S_{\kappa,L}\to \calH^{1,1}(X_\Gamma)$ has the following properties:
\begin{enumerate}
\item[(i)] If $g\in S_{\kappa,L}$ with Fourier expansion $g= \sum_\mu\sum_\mu b(m,\mu)q^m\frake_\mu$, then the Fourier expansion of $\Lambda(z,g)$ is given by
\begin{align*}
\Lambda(z,g) = \Lambda_0(y,g)-&2^{2-\kappa}\pi^{1/2-\kappa}
\sum_{\substack{\lambda\in K'\\ Q(\lambda)> 0}} |\lambda|^{-n}
 \sum_{d\mid\lambda} d^{n-1} \!\!\!\sum_{\substack{\delta\in L'/L \\ \delta\mid L\cap \ell^\perp=\lambda/d+K}}  \!\!\!e(d(\delta,\ell'))\\
 &{} \times b(Q(\lambda)/d^2, \delta) dd^c  \calV_{\kappa} \left( \pi  |\lambda| |y|, \pi  (\lambda,y)\right)e((\lambda,x)).
\end{align*}
Here  $|\lambda|=|(\lambda,\lambda)|^{1/2}$ and the sum $\sum_{d\mid\lambda}$ runs through all positive integers $d$ such that $\lambda/d\in K'$.
 Moreover, the $0$-th coefficient $\Lambda_0(y,g)$ is a certain $(1,1)$-form which is independent of $x$.
\item[(ii)]
If $f\in H_{2-\kappa,L^-}$ with Fourier coefficients $c^\pm(m,h)$
such that $\xi(f)=g$, then
$\Lambda(z,\xi(f))$
is a square integrable harmonic representative for the Chern class in $H^2(X_\Gamma,\C)$ of the divisor $2Z(f)$.
\item[(iii)]
For $\gamma\in \Orth(L)^+$ we have
\[
\Lambda(z,\gamma.g)= \Lambda(\gamma z,g).
\]
Here $g\mapsto \gamma.g$ denotes the action of $\Orth(L)^+$ on $S_{\kappa,L}$ via its action on $\C[L'/L]$ through $\Orth(L)^+\to \Aut(L'/L)$.
\end{enumerate}
\end{theorem}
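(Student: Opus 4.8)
The plan is to argue directly from the definition $\Lambda(z,g)=dd^c\Phi(z,f)$, where $f\in H_{2-\kappa,L^-}$ is any harmonic Maass form with $\xi(f)=g$ and vanishing constant coefficient, and $\Phi(z,f)$ is the regularized theta integral \eqref{eq:c2} against the Siegel theta function \eqref{eq:c1}. Parts (ii) and (iii) are essentially formal, so I would dispose of them first and then concentrate on the Fourier expansion (i), where the real work lies. For (ii), I would invoke the facts recorded just before the statement, namely that $\tfrac{1}{2}\Phi(z,f)$ is a logarithmic Green function for the divisor $Z(f)$ of \eqref{eq:defzf} (see \cite[Theorem~2.12]{Br1}) and that $dd^c\Phi(z,f)$ extends to a smooth, square integrable, harmonic $(1,1)$-form. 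By the Poincar\'e--Lelong equation, a logarithmic Green function $G$ for a divisor $D$ satisfies, in the sense of currents, $dd^c G+\delta_{D}=\omega_G$ with $\omega_G$ a smooth form representing the class of $D$ in $H^2(X_\Gamma,\C)$. Applied to $G=\tfrac{1}{2}\Phi(z,f)$ and $D=Z(f)$, this shows that $dd^c\Phi(z,f)=\Lambda(z,\xi(f))$ is a square integrable harmonic representative of the class of $2Z(f)$, which is the assertion.

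For (iii) I would use the equivariance of the theta kernel. Since $\gamma\in\Orth(L)^+$ is an isometry preserving $L$ (hence $L'$), substituting $\lambda\mapsto\gamma\lambda$ in \eqref{eq:c1} and using $Q(\lambda_{(\gamma z)^\perp})=Q((\gamma^{-1}\lambda)_{z^\perp})$ together with the analogous identity for the $z$-projection yields $\Theta_L(\tau,\gamma z)=\rho_L(\gamma)\Theta_L(\tau,z)$, where $\rho_L(\gamma)$ is the permutation $\frake_{\mu+L}\mapsto\frake_{\gamma\mu+L}$ of \eqref{eq:weilh}. Because $\xi$ commutes with this permutation action and $\gamma.f:=\rho_L(\gamma)f$ has the same constant coefficient as $f$, one may take $\gamma.f$ as the harmonic Maass form attached to $\gamma.g$. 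Feeding this into \eqref{eq:c2} and using unitarity of $\rho_L(\gamma)$ in the pairing gives $\Phi(\gamma z,f)=\Phi(z,\gamma^{-1}.f)$. Since $\gamma$ acts biholomorphically on $\calH^+$, the operator $dd^c$ commutes with this change of variables, and combining the two facts yields (iii) (with the $\Orth(L)^+$-action on $S_{\kappa,L}$ normalized as in \eqref{eq:weilh}).

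The heart of the proof is (i). By the adjointness of $\Lambda$ to the Kudla--Millson lift \cite[Theorem~6.1]{BF}, the form $\Lambda(\cdot,g)$ can also be written as the (now convergent, since $g$ is cuspidal) Petersson integral of $g$ against the Kudla--Millson theta kernel attached to $L$; equivalently one computes the Fourier expansion of $\Phi(z,f)$ as in \cite[Chapter~5]{Br1} and applies $dd^c$. In either realization I would pass to the tube domain model \eqref{eq:td} for the fixed primitive isotropic vector $\ell$ and companion $\ell'$, split the theta sum according to the projection to $K=(L\cap\ell^\perp)/\Z\ell$, apply Poisson summation in the $\ell$-direction, and then unfold the $\Mp_2(\Z)\backslash\H$-integral against the Fourier expansion $g=\sum_{m,\mu}b(m,\mu)q^m\frake_\mu$. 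This reduces the non-constant part of $\Lambda(z,g)$ to a sum over $\lambda\in K'$ with $Q(\lambda)>0$; evaluating the remaining one-variable Archimedean integral produces exactly the special function $\calV_\kappa(\pi|\lambda||y|,\pi(\lambda,y))$ of \eqref{DefV} and the normalizing constant $2^{2-\kappa}\pi^{1/2-\kappa}$, while the terms independent of $x$ assemble into $\Lambda_0(y,g)$.

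The main obstacle is the lattice-theoretic bookkeeping in this unfolding. Because $K'$ is in general not contained in $L'$ (as the warning in Section~\ref{sect:5} emphasizes), one cannot directly index the theta sum by cosets in $L'/L$; instead each $\lambda\in K'$ must be matched, together with a divisibility parameter $d$ satisfying $\lambda/d\in K'$, to the cosets $\delta\in L'/L$ whose restriction to $L\cap\ell^\perp$ equals $\lambda/d+K$. Keeping track of this correspondence, and of the character sum over such $\delta$, is precisely what produces the inner sum $\sum_{d\mid\lambda}d^{n-1}\sum_{\delta}e(d(\delta,\ell'))\,b(Q(\lambda)/d^2,\delta)$; verifying that the power $d^{n-1}$ and the phases $e(d(\delta,\ell'))$ come out correctly is the delicate point, and it is here that I would expect to spend most of the effort.
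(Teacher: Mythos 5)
Your proposal is correct and follows essentially the same route as the paper: the paper disposes of (i) by citing \cite[Theorem~5.9]{Br1}, of (ii) by citing \cite[Theorem~5.5]{Br1} (equivalently \cite[Theorem~7.3]{BF}), and proves (iii) exactly as you do, via the equivariance $\Theta_L(\tau,\gamma z)=\rho_L(\gamma)\,\Theta_L(\tau,z)$ of the Siegel theta kernel under the construction \eqref{eq:c2}. Your Poincar\'e--Lelong argument for (ii) and your unfolding sketch for (i) (tube domain, Poisson summation in the $\ell$-direction, evaluation of the archimedean integral to $\calV_\kappa$) are precisely the content of the results the paper cites, so the two proofs differ only in how much of that prior material is reproduced rather than referenced.
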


\begin{proof}
The first assertion is the first part of \cite[Theorem 5.9]{Br1}. The second assertion follows from
\cite[Theorem 5.5]{Br1} or \cite[Theorem 7.3]{BF}.
The third statement follows from the construction of $\Lambda$ by means of the theta lift \eqref{eq:c2} and the corresponding equivariance property of the Siegel theta function \eqref{eq:c1}.
\end{proof}

Let $\Div(X_\Gamma)$ be the group of divisors of $X_\Gamma$ and put $\Div(X_\Gamma)_\C=\Div(X_\Gamma)\otimes\C$. We define a subspace of $H_{2-\kappa,L^-}$ by
\begin{align}
\label{bi38}
N_{2-\kappa,L^-} & =\{\text{$f\in H_{2-\kappa,L^-}$}:\;\text{$Z(f)=0\in \Div(X_\Gamma)_\C$}\}.
\end{align}
It follows from the construction of the map $\Lambda :S_{\kappa,L}\to \calH^{1,1}(X_\Gamma)$
and \cite[Theorem 4.23]{Br1}
% and Theorem~\ref{thm:lift}
% and the construction of $\Lambda$
that $\xi(   N_{2-\kappa,L^-})$ is contained in the kernel of $\Lambda$.
We let $S_{\kappa,L}^+$ be the orthogonal complement of $\xi(   N_{2-\kappa,L^-})$ with respect to the Petersson scalar product. Notice that the spaces $N_{2-\kappa,L^-}$ and $S_{\kappa,L}^+$ in general really depend on $L$ and not only on $L'/L$. They are stable under the action of $\Orth(L)^+$.
If the lattice $L$ splits a hyperbolic plane over $\Z$, then $N_{2-\kappa,L^-}=0$, but in general it can be non-zero.
The constant term $c^+(0,0)$ automatically vanishes for any $f\in N_{2-\kappa,L^-} $.

We write $\Lambda^+$ for the restriction of $\Lambda$ to $S_{\kappa,L}^+$.
The following theorem gives a {\em necessary and sufficient\/} criterion for the converse theorem. It is a refinement of
\cite[Theorem 5.11]{Br1}.
%\texttt{It might be better to explain this in more detail with a diagram???}

\begin{theorem}\label{lift+}
Suppose that $n\geq 2$ and that $n$ is greater than the Witt rank of $V$.
The following are equivalent:

i) The map $\Lambda^+: S_{\kappa,L}^+\to \calH^{1,1}(X_\Gamma)$
is injective.

ii) Every meromorphic modular form $F$ with respect to $\Gamma$
whose divisor is a linear combination of special divisors as in
\eqref{eq:divf}
%($c(\lambda)\in \Z$ with $c(\lambda)=c(-\lambda)$ and $c(\lambda)=0$ for all but finitely many $\lambda$),
is (up to a non-zero constant factor)
the Borcherds lift $\Psi(z,f)$ of a weakly
holomorphic modular form $f\in M_{2-\kappa,L^-}^{!}$ with integral principal part.
\end{theorem}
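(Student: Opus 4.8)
The plan is to prove the equivalence by reducing both statements to the single identity $\ker(\Lambda)=\xi(N_{2-\kappa,L^-})$, and then to read this identity off the Green function $\Phi(z,f)$. Since $\xi(N_{2-\kappa,L^-})\subseteq\ker(\Lambda)$ and $S_{\kappa,L}^{+}$ is by definition its Petersson-orthogonal complement in $S_{\kappa,L}$, the splitting $S_{\kappa,L}=\xi(N_{2-\kappa,L^-})\oplus S_{\kappa,L}^{+}$ gives $\ker(\Lambda)=\xi(N_{2-\kappa,L^-})\oplus\ker(\Lambda^{+})$, so that $\Lambda^{+}$ is injective if and only if $\ker(\Lambda)=\xi(N_{2-\kappa,L^-})$. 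The dictionary I would use throughout is this: for $f\in H_{2-\kappa,L^-}$ with $\xi(f)=g$ and constant term $c^{+}(0,0)$, linearity of the theta integral in $f$ together with the weakly holomorphic evaluation $dd^{c}\Phi(z,f)=c^{+}(0,0)\,\Omega$ yields $dd^{c}\Phi(z,f)=\Lambda(g)+c^{+}(0,0)\,\Omega$, which by Theorem~\ref{thm:lift}(ii) is a smooth square integrable harmonic representative of $c_{1}(2Z(f))$.

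For the implication $\ker(\Lambda)=\xi(N_{2-\kappa,L^-})\Rightarrow$ (ii), let $F$ have weight $k$ and divisor $D=\tfrac12\sum c(-m,\mu)Z(m,\mu)$ with integral $c(-m,\mu)$, and pick a harmonic Maass form $f_{0}$ with that integral principal part, so $Z(f_{0})=D$. Since $F^{2}$ is a meromorphic modular form of weight $2k$ and $\Omega$ is the first Chern form of weight-one forms, $c_{1}(2D)=2k\,[\Omega]$ in $H^{2}(X_\Gamma,\C)$. Comparing with the dictionary, using that $\Lambda(\xi f_{0})$ is orthogonal to $\Omega$ (the lift of a cusp form is primitive, as $\int_{X_\Gamma}\Lambda(z,g)\wedge\Omega^{\,n-1}$ pairs $g$ against the Eisenstein Kudla--Millson lift of $\Omega^{\,n-1}$) and $L^{2}$-Hodge uniqueness of harmonic representatives, forces $\Lambda(\xi f_{0})=0$. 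Hence $\xi(f_{0})\in\ker(\Lambda)=\xi(N_{2-\kappa,L^-})$, say $\xi(f_{0})=\xi(h)$ with $h\in N_{2-\kappa,L^-}$; then $f:=f_{0}-h$ is weakly holomorphic with $Z(f)=D=\dv(F)$. Taking $f$ with integral principal part, which can be arranged because $N_{2-\kappa,L^-}$ and the lattice of relations among the $Z(m,\mu)$ are defined over $\Q$, the Borcherds lift $\Psi(z,f)$ is defined and $\Psi(z,f)/F$ is a nowhere vanishing modular form; here $n\geq 2$ and $n$ exceeding the Witt rank of $V$ enter, since pairing $dd^{c}\log\|\cdot\|_{\Pet}$ against $\Omega^{\,n-1}$ forces its weight to be $0$ and a nowhere vanishing holomorphic function extends across the Baily--Borel boundary to a constant, giving $\Psi(z,f)=c\,F$.

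For the converse I would argue contrapositively. If $\ker(\Lambda)\supsetneq\xi(N_{2-\kappa,L^-})$, choose $0\neq g_{0}\in\ker(\Lambda)\setminus\xi(N_{2-\kappa,L^-})$ (rational after scaling) and $f_{0}\in H_{2-\kappa,L^-}$ with $\xi(f_{0})=g_{0}$ and $c^{+}(0,0)=0$, so $dd^{c}\Phi(z,f_{0})=\Lambda(g_{0})=0$ and $\tfrac12\Phi(z,f_{0})$ is a harmonic Green function for $Z(f_{0})$. Then $c_{1}(2Z(f_{0}))=0$, while $Z(f_{0})\neq 0$ (otherwise $f_{0}\in N_{2-\kappa,L^-}$ and $g_{0}\in\xi(N_{2-\kappa,L^-})$). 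Exponentiating this harmonic Green function produces a meromorphic modular form $F$ of weight $0$ with $\dv(F)=Z(f_{0})$, a nonzero combination of special divisors; this is exactly where $n\geq 2$ and $n$ exceeding the Witt rank guarantee the vanishing of the $H^{1}$/period obstruction. Were $F$ a Borcherds lift $\Psi(z,f)$ of a weakly holomorphic $f$, then $Z(f)=Z(f_{0})$, hence $f-f_{0}\in N_{2-\kappa,L^-}$ and $-g_{0}=\xi(f-f_{0})\in\xi(N_{2-\kappa,L^-})$, a contradiction; so (ii) fails.

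The step I expect to be the main obstacle is the analytic passage between a curvature-free (harmonic) Green function and a genuine meromorphic modular form: exponentiating $\Phi(z,f)$, the $L^{2}$-Hodge identification of harmonic representatives, and the vanishing of the period obstruction. These are precisely the points that require $n\geq 2$ and $n$ greater than the Witt rank of $V$, and they fail for curves, in accordance with the known counterexamples in the case $n=1$. A secondary technical point is ensuring that the principal part can be taken integral, which I would handle through the rational structure of the spaces involved.
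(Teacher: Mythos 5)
Your logical skeleton is in fact the same as the paper's: the splitting $S_{\kappa,L}=\xi(N_{2-\kappa,L^-})\oplus S_{\kappa,L}^+$, the equivalence of (i) with $\ker(\Lambda)=\xi(N_{2-\kappa,L^-})$, and the final divisor comparisons showing $f-f_0\in N_{2-\kappa,L^-}$ all match the paper's argument. The problem lies in the two analytic pillars you substitute for the paper's citations. For (i)$\Rightarrow$(ii), you deduce $\Lambda(\xi(f_0))=0$ by observing that $\Lambda(\xi(f_0))+c^+(0,0)\,\Omega$ and $2k\,\Omega$ are square-integrable harmonic forms representing the same class in $H^2(X_\Gamma,\C)$ and then invoking ``$L^2$-Hodge uniqueness of harmonic representatives.'' On the non-compact variety $X_\Gamma$ this uniqueness is not a standard fact and is nowhere established: the space of square-integrable harmonic $2$-forms computes reduced $L^2$-cohomology (by Zucker's conjecture, the intersection cohomology of the Baily--Borel compactification), and the natural map from it to $H^2(X_\Gamma,\C)$ can a priori have a kernel; proving injectivity would require a genuine analysis of the boundary strata. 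The paper avoids degree-two Hodge theory entirely: after normalizing $F$ to weight $0$ by dividing by a Borcherds product, it invokes \cite[Theorem 4.23]{Br1}, which directly produces $f\in H_{2-\kappa,L^-}$ with integral principal part and $\Phi(z,f)=-4\log|F|$ up to a constant, so that $\Lambda(\xi(f))=dd^c\Phi(z,f)=0$ is immediate. The only uniqueness statement hidden in that citation concerns functions (square-integrable pluriharmonic functions on $X_\Gamma$ are constant), which is far weaker than the degree-two statement you need. Your auxiliary claim that $\Lambda(g)$ is orthogonal to $\Omega$ for cusp forms $g$ is likewise asserted, not proved; the unfolding against an Eisenstein-type lift requires a regularization argument.

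For (ii)$\Rightarrow$(i), the step you yourself flag as the main obstacle --- that $dd^c\Phi(z,f_0)=0$ forces $\Phi(z,f_0)=-4\log|F|$ for a meromorphic modular form $F$ of weight $0$ with a character of finite order --- is precisely \cite[Lemma 6.6]{Br3} in the paper's proof. Saying that the hypotheses $n\geq 2$ and $n$ greater than the Witt rank ``guarantee the vanishing of the $H^1$/period obstruction'' correctly locates the difficulty but does not resolve it. So the proposal is structurally sound, and everything surrounding the two analytic inputs is correct; but both implications ultimately rest on statements that are neither proved nor reduced to elementary facts, and one of them (the $L^2$-Hodge uniqueness in degree two) is a substantively stronger and more doubtful claim than what the paper actually uses.
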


\begin{proof}
  First, assume that $\Lambda^+$ is injective, and let $F$ be as in
  (ii). Since there always exist Borcherds products for $\Gamma$ of
  non-zero weight, we may assume that $F$ has weight $0$.  According
  to \cite[Theorem 4.23]{Br1} there exists an $f\in H_{2-\kappa,L^-}$
  with vanishing constant term $c^+(0,0)$ and integral principal part
  such that $\Phi(z,f)$ is equal to $-4\log|F(z)|$ up to a constant.
  Then $\Lambda(\xi(f))=dd^c \Phi(z,f)=0$, and (i) implies that
  $\xi(f)\in \xi(N_{2-\kappa,L^-})$.  Hence, there exists a $h\in
  N_{2-\kappa,L^-}$ with integral principal part such that
  $\xi(f)=\xi(h)$.  Consequently, $f_0:=f-h$ belongs to
  $M^!_{2-\kappa,L^-}$, and satisfies $Z(f_0)=Z(f)=\dv(F)$.  The
  Borcherds lift of $f_0$ is equal to $F$ up to a constant factor.

  Now assume that (ii) holds, and let $g\in S_{\kappa,L}^+$ such that
  $\Lambda^+(g)=0$.  Let $f\in H_{2-\kappa,L^-}$ with vanishing constant
  term such that $\xi(f)=g$.
%\texttt{Without loss of generality we may assume that $f$ has integral principal part.}
The fact that
  $dd^c\Phi(z,f)=\Lambda^+(g)=0$ implies that there exists a meromorphic
  modular form $F$ of weight $0$ for $\Gamma$ (with a character of
  finite order) such that $-4\log|F|=\Phi(z,f)$, cf.~\cite[Lemma 6.6]{Br3}.
  In particular, the
  divisor of $F$ is supported on special divisors, and therefore (ii)
  implies that there exists a $f_0\in M^!_{2-\kappa,L^-}$ such that
  $\Phi(z,f_0)=\Phi(z,f)$.  Consequently, $f-f_0\in N_{2-\kappa,L^-}$ and
  $g=\xi(f-f_0)$. Since $g\in S_{\kappa,L}^+$, we obtain that $g$ must
  vanish.
%
%That the first statement implies the second can be proved as in \cite[Theorem 5.11]{Br1}.
%Here we prove that the second implies the first. \texttt{Put in !!!}
\end{proof}

\section{Lattices that split a hyperbolic plane over $\Z$}
\label{sect:5.1}

Here we use the newform theory of Section \ref{sect:3}
and the criterion given in Theorem \ref{lift+}
%a criterion of \cite{Br1}
to prove a converse theorem for lattices that split a hyperbolic plane over $\Z$. We continue to use the notation of the previous section.

If $M$ is a lattice equipped with a quadratic form $q$, and $N$ is a non-zero integer, we write
$M(N)$ for the lattice given by $M$ as a $\Z$-module, but equipped with the rescaled quadratic form $N\cdot q$. We have $M(N)'=\frac{1}{N}M'$.
We let $U$
be the lattice $\Z^2$ with the quadratic form $q((a,b))=ab$.
Up to isometry this is the unique unimodular even lattice of signature $(1,1)$.
WE call any lattice isomorphic to $U$ a hyperbolic plane.

\begin{lemma}
\label{lem:6.1}
Let $L$ be an even lattice of level $N$.
Let $\ell\in L$ be a primitive isotropic vector such that $(\ell,L)=N\Z$.
Then there exists an isotropic vector $\tilde \ell \in L$ with $(\tilde \ell,\ell)=N$ such that $L=K\oplus \Z\tilde \ell\oplus \Z\ell$, where $K=L\cap \ell^\perp\cap\tilde\ell^\perp$. In particular, $L\cong K\oplus U(N)$.
\end{lemma}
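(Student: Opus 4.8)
The plan is to construct $\tilde\ell$ explicitly by correcting a vector dual to $\ell$ so that it becomes isotropic and orthogonal to a complementary sublattice. First I would use the hypothesis $(\ell,L)=N\Z$, together with primitivity of $\ell$, to produce a vector $\ell_0\in L$ with $(\ell,\ell_0)=N$. Since $\ell$ is primitive in $L$, the quotient $L/\Z\ell$ is torsion-free, hence a lattice, and the pairing with $\ell$ descends to a surjection $L\to N\Z$, $x\mapsto (\ell,x)$; any preimage of $N$ gives such an $\ell_0$. I would then set $K=L\cap\ell^\perp\cap\ell_0^\perp$ (adjusting $\ell_0$ below so this matches the final statement) and aim to show $L$ splits as an orthogonal-up-to-$U(N)$ direct sum.

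The key computational step is to replace $\ell_0$ by a corrected vector $\tilde\ell=\ell_0+a\ell+\kappa$ with $a\in\Z$ and $\kappa\in K':=(L\cap\ell^\perp)$-related correction, chosen so that $\tilde\ell$ is isotropic and still satisfies $(\tilde\ell,\ell)=N$. Concretely, $(\tilde\ell,\ell)=(\ell_0,\ell)=N$ for any $a,\kappa$ with $\kappa\perp\ell$, so that constraint is automatic. The isotropy condition $Q(\tilde\ell)=0$ becomes a single congruence: writing $Q(\ell_0+\kappa)=Q(\ell_0)+(\ell_0,\kappa)+Q(\kappa)$, the term $aN=a(\ell,\ell_0)$ coming from $a\ell$ lets me adjust $Q(\tilde\ell)$ by integer multiples of $N$, and since $L$ has level $N$ the value $Q(\ell_0)$ lies in $\frac1N\Z$ with the relevant quantities controllable modulo $1$; choosing $\kappa\in L\cap\ell^\perp$ appropriately kills the fractional part and then $a\in\Z$ kills the integral part. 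I would verify that $N\mid 2Q(\ell_0)$-type divisibilities hold using that $L$ is even and of level $N$, so that the linear equation for $a$ is solvable over $\Z$.

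Once $\tilde\ell$ is isotropic with $(\tilde\ell,\ell)=N$, I would set $K=L\cap\ell^\perp\cap\tilde\ell^\perp$ and prove the splitting $L=K\oplus\Z\tilde\ell\oplus\Z\ell$. For this I take an arbitrary $x\in L$ and subtract off its components along $\ell$ and $\tilde\ell$: since $(\tilde\ell,\ell)=N$, the vector $x-\tfrac{(\ell,x)}{N}\tilde\ell-\tfrac{(\tilde\ell,x)}{N}\ell$ is orthogonal to both $\ell$ and $\tilde\ell$, and I must check the coefficients are integers. Here $(\ell,x)\in N\Z$ by hypothesis, so $\tfrac{(\ell,x)}{N}\in\Z$; the coefficient $\tfrac{(\tilde\ell,x)}{N}$ is integral because $(\tilde\ell,L)\subseteq N\Z$, which I would derive from the fact that $\tilde\ell$ and $\ell$ play symmetric roles (or directly from $(\tilde\ell,L)=N\Z$, provable since scaling shows $\tilde\ell$ is also primitive with the same divisibility). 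The span $\Z\tilde\ell\oplus\Z\ell$ with Gram matrix $\left(\begin{smallmatrix}0&N\\N&0\end{smallmatrix}\right)$ is exactly $U(N)$, giving $L\cong K\oplus U(N)$.

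The main obstacle I anticipate is the integrality in the correction step: ensuring that the isotropy congruence for $\tilde\ell$ can be solved with $a\in\Z$ and $\kappa\in L\cap\ell^\perp$ simultaneously, which requires the right divisibility of $Q(\ell_0)$ by properties of the level, and that the correction $\kappa$ can be chosen inside $L$ (not merely $L'$) while keeping $\tilde\ell\in L$. The symmetric divisibility $(\tilde\ell,L)\subseteq N\Z$ needed for the splitting is the other delicate point; I would establish it by noting that $\tilde\ell=\ell_0+\cdots$ and $(\ell_0,L)$ already lies in $N\Z$ once $\ell_0$ is chosen as a dual-type vector, or alternatively by invoking the nondegeneracy of the restricted form on $K$ to pin down $\ell_0$'s divisibility.
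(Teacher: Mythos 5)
There is a genuine gap, and it sits exactly at the point you flag as ``delicate'': the divisibility $(\tilde\ell,L)\subseteq N\Z$ is \emph{not} automatic for the vector your construction produces, and neither of your proposed fixes works. Your $\tilde\ell$ comes from an arbitrary $\ell_0\in L$ with $(\ell,\ell_0)=N$, corrected by $\kappa\in L\cap\ell^\perp$ and $a\in\Z$ only so as to become isotropic; nothing in this forces $(\tilde\ell,L)\subseteq N\Z$. Concretely, take $L=U(N)\oplus U$ with $N>1$ (this lattice has level $N$), where $U(N)=\Z e+\Z f$ with $(e,f)=N$, $Q(e)=Q(f)=0$, and $U=\Z u_1+\Z u_2$ with $(u_1,u_2)=1$, $Q(u_1)=Q(u_2)=0$; let $\ell=e$, so $(\ell,L)=N\Z$. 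The vector $\ell_0=f+u_1$ satisfies $(\ell,\ell_0)=N$ and is already isotropic, so your procedure may legitimately output $\tilde\ell=f+u_1$ (with $\kappa=0$, $a=0$). But $(\tilde\ell,u_2)=1\notin N\Z$, so $\tilde\ell$ does \emph{not} have ``the same divisibility'' as $\ell$, the coefficient $(\tilde\ell,x)/N$ in your projection is not integral, and indeed $K\oplus\Z\tilde\ell\oplus\Z\ell$ (with $K=L\cap\ell^\perp\cap\tilde\ell^\perp=\Z(e-Nu_2)+\Z u_1$) has Gram determinant $N^4$ while $\det L=N^2$, so it is a sublattice of index $N$, not all of $L$. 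The lemma asserts the existence of a \emph{good} isotropic vector; isotropy together with $(\tilde\ell,\ell)=N$ does not characterize one, so your argument does not prove the lemma.

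A secondary defect lies in the isotropy-correction step itself: since $\ell_0\in L$ and $L$ is even, $Q(\ell_0)$ is an integer, not an element of $\frac1N\Z$, so there is no ``fractional part'' to kill; what must be solved is the congruence $Q(\ell_0)+(\ell_0,\kappa)+Q(\kappa)\equiv 0\pmod N$, and divisibilities of the type $N\mid 2Q(\ell_0)$ are false in general (in the example above, $\ell_0=f+u_1+u_2$ has $(\ell,\ell_0)=N$ and $Q(\ell_0)=1$). Both problems are cured simultaneously by passing to the dual lattice, which is what the paper does: choose $\ell'\in L'$ with $(\ell',\ell)=1$ and set $\tilde\ell=N\bigl(\ell'-Q(\ell')\ell\bigr)$. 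A direct computation gives $Q(\tilde\ell)=0$ and $(\tilde\ell,\ell)=N$; the level hypothesis gives $NL'\subset L$ and $NQ(\ell')\in\Z$, whence $\tilde\ell\in L$; and, crucially, $\tilde\ell\in NL'$ yields $(\tilde\ell,L)\subseteq N\Z$ for free, which is exactly what makes your projection $x\mapsto x-\frac{(\ell,x)}{N}\tilde\ell-\frac{(\tilde\ell,x)}{N}\ell$ land in $K$ with integral coefficients. Your overall architecture (construct $\tilde\ell$, then split off $\Z\tilde\ell\oplus\Z\ell\cong U(N)$ by projection) matches the paper's, but the construction of $\tilde\ell$ must go through $L'$; starting from a vector of $L$ and correcting cannot be repaired without effectively reintroducing the dual vector.
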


\begin{proof}
Let $\ell'\in L'$ such that $(\ell',\ell)=1$, and put $\tilde \ell= N(\ell'-Q(\ell')\ell)$. Then $\tilde \ell$ is isotropic and satisfies $(\tilde \ell,\ell)=N$.
Since $L$ has level $N$, we have $NL'\subset L$ and $NQ(\ell')\in \Z$. Consequently, $\tilde \ell$ belongs to $L$.
The splitting $L=K\oplus \Z\tilde \ell\oplus \Z\ell$ can be proved as in \cite[Proposition 2.2]{Br1}.
\end{proof}

We now assume that the lattice $L\subset V$ is of the form
$L\cong D\oplus U(N)\oplus U$ for some positive definite even lattice $D$ of dimension $n-2$.
We put
\[
A=L'/L\cong D'/D\oplus U(N)'/U(N).
\]
We have $U(N)'/U(N)\cong (\Z/N\Z)^2$ and the
automorphism group of $U(N)'/U(N)$ contains $(\Z/N\Z)^\times$.
In fact, for $r\in (\Z/N\Z)^\times$ we have the automorphism $\bar \varphi_r$ given by
$(a,b)\mapsto (ra,r^*b)$, where $r^*$ denotes the inverse of $r$ modulo $N$.

\begin{lemma}
\label{lem:act}
For $r\in (\Z/N\Z)^\times$ there exists a $\varphi_r\in \Orth(L)^+$ whose image under
\[
\Orth(L)^+\longrightarrow \Aut(A)
\]
restricts to the identity on $D'/D$ and to $\bar\varphi_r$ on $U(N)'/U(N)$.
The transformation $\varphi_r$ is uniquely determined up to multiplication by elements of $\Gamma(L)$.
\end{lemma}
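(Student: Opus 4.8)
The goal is to construct, for each unit $r\in(\Z/N\Z)^\times$, an isometry $\varphi_r\in\Orth(L)^+$ realizing the prescribed automorphism of $A=L'/L$. The plan is to work with an explicit splitting of $L$ and define $\varphi_r$ block-wise. First I would use Lemma \ref{lem:6.1} (or the hypothesis $L\cong D\oplus U(N)\oplus U$) to fix a basis adapted to the decomposition: let $U(N)$ be spanned by isotropic vectors $e,f$ with $(e,f)=N$, and let $U$ be spanned by isotropic vectors $\ell,\ell'$ with $(\ell,\ell')=1$. The discriminant group $U(N)'/U(N)\cong(\Z/N\Z)^2$ is then generated by the images of $\tfrac1N e$ and $\tfrac1N f$, and the automorphism $\bar\varphi_r$ sends $\tfrac1N e\mapsto \tfrac{r}{N}e$, $\tfrac1N f\mapsto \tfrac{r^*}{N}f$.

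The key idea is that an integral scaling on the $U(N)$-block alone is \emph{not} an isometry of the lattice (it fails integrality on the dual side), so one must compensate using the unimodular hyperbolic plane $U$. Concretely, I would define $\varphi_r$ to act as the identity on $D$ and to mix the $U(N)$ and $U$ coordinates. The natural candidate is to send $e\mapsto re + (\text{correction in } U)$ and $f\mapsto r^* f + (\text{correction})$ with matching adjustments to $\ell,\ell'$ chosen so that (a) $\varphi_r$ preserves the quadratic form, (b) $\varphi_r$ maps $L$ into $L$ with integral inverse, and (c) the induced action on $A$ is exactly $\bar\varphi_r$ on the $U(N)$-part and trivial on $D'/D$. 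Since $r\equiv r^*{}^{-1}$, writing $r r^* = 1 + kN$ for some integer $k$, the scaling $(e,f)\mapsto(re,r^*f)$ preserves $(e,f)=N$ up to the error $kN^2$, which I would absorb by adding multiples of $\ell,\ell'$ (whose pairing is $1$, hence can supply any integer correction). This is the step where the presence of the genuine $U$ is essential and where the computation is least routine.

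Once $\varphi_r$ is written down explicitly as an integral matrix in the chosen basis, verifying that it is an isometry (preserves the Gram matrix) and that it lies in $\Orth(L)^+$ is a direct calculation: integrality of $\varphi_r$ and $\varphi_r^{-1}$ shows $\varphi_r\in\Orth(L)$, and one checks the spinor-norm/determinant sign condition defining $\Orth(L)^+$, using that $\varphi_r$ can be taken with determinant $1$ (it is a product of Eichler transvections, which lie in the connected component). Reducing $\varphi_r$ modulo $L$ then gives the claimed action on $A$ by inspection. I expect the construction of the correction terms in the second step to be the main obstacle, since one must simultaneously satisfy the isometry and integrality conditions; the Eichler-transvection viewpoint (using the isotropic vectors $\ell,\ell'$ from the split hyperbolic plane) is the cleanest way to guarantee both at once.

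Finally, for the uniqueness statement, suppose $\varphi_r,\varphi_r'\in\Orth(L)^+$ both induce the same automorphism of $A$. Then $\varphi_r^{-1}\varphi_r'\in\Orth(L)^+$ acts trivially on $A=L'/L$, hence by definition lies in the discriminant kernel $\Gamma=\Gamma(L)$. Thus $\varphi_r' \in \varphi_r\cdot\Gamma(L)$, which is precisely the assertion that $\varphi_r$ is determined up to multiplication by elements of $\Gamma(L)$.
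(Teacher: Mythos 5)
Your outer framework is sound: reducing to the block $U(N)\oplus U$ (identity on $D$), aiming for an isometry that mixes the rescaled plane with the unimodular one, and the uniqueness argument (two lifts differ by an element of $\Orth(L)^+$ acting trivially on $A$, i.e.\ by an element of the discriminant kernel $\Gamma(L)$) is exactly right. The genuine gap is in the middle: the construction of $\varphi_r$ with its correction terms is the entire content of the lemma, you leave it unexecuted, and the one concrete device you name both for producing it and for certifying $\varphi_r\in\Orth(L)^+$ --- Eichler transvections --- cannot do the job. As the paper records right after \eqref{eq:eichler}, $E(u,v)\in\Gamma(L)$ whenever $u,v\in L$; hence any product of such transvections acts \emph{trivially} on $A=L'/L$ and can never induce $\bar\varphi_r\neq\id$. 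So the parenthetical ``it is a product of Eichler transvections, which lie in the connected component'' is self-contradictory. Even the generalized transvections $E(u,v)$ with $v\in L'\cap u^\perp$ (as in Lemma \ref{lem:eichler}) induce \emph{unipotent} automorphisms of $A$, whereas $\bar\varphi_r$ is a nontrivial diagonal scaling, hence not unipotent for $r\neq 1$; so no single such transvection works either, and you never argue that $\varphi_r$ is a product of them (for $L=U(N)\oplus U$ one can check the admissible ones act trivially on $A$, so in fact no product works).

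The gap is fillable, and the shape you predict for $\varphi_r$ is correct. With $(e,f)=N$, $(\ell,\ell')=1$, choose $a,b,c,d\in\Z$ with $ad-Nbc=1$ and $a\equiv r\pmod N$ (possible since $\gcd(r,N)=1$), and let $\varphi_r$ be the identity on $D$ and
\[
\varphi_r(e)=ae+Nc\ell',\qquad
\varphi_r(f)=df-Nb\ell,\qquad
\varphi_r(\ell)=a\ell-cf,\qquad
\varphi_r(\ell')=d\ell'+be .
\]
A direct check shows all inner products are preserved, that $\varphi_r$ maps $L$ onto $L$ (its inverse has the same form, built from $\kzxz{d}{-b}{-Nc}{a}$), and that on $A$ it sends $\tfrac1Ne\mapsto a\cdot\tfrac1Ne$ and $\tfrac1Nf\mapsto d\cdot\tfrac1Nf$ with $ad\equiv1\pmod N$, i.e.\ induces $\bar\varphi_r$ on $U(N)'/U(N)$ and the identity on $D'/D$. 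Membership in $\Orth(L)^+$ follows not from transvections but from connectivity: $\varphi_r$ lies in the image of the connected group $\SL_2(\R)\times\SL_2(\R)$ acting on $(U(N)\oplus U)\otimes\R$, hence in the identity component of $\Orth(V)(\R)$, hence in $\Orth(V)(\R)^+$. This is in substance the paper's proof, packaged more efficiently: the paper realizes $U(N)\oplus U$ as the lattice of integral $2\times2$ matrices whose lower-left entry is divisible by $N$, with quadratic form $\det$, lets $\Gamma_0(N)\times\Gamma_0(N)$ act by isometries via $(\gamma_1,\gamma_2).X=\gamma_1X\gamma_2^{-1}$ (this lands in $\Orth(L)^+$, and $\Gamma_1(N)\times\Gamma_1(N)$ lands in $\Gamma(L)$), and then reads off that $\bar\varphi_r$ lies in the image of $(\Z/N\Z)^\times\times(\Z/N\Z)^\times\to\Aut(L'/L)$; the matrices displayed above are, up to the choice of basis, exactly the image of $\kzxz{a}{b}{Nc}{d}\in\Gamma_0(N)$ under this action.
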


\begin{proof}
It suffices to prove the assertion if $L=U(N)\oplus U$. We realize this lattice
as the group of integral matrices $X\in \Mat_2(\Z)$ whose left lower entry is divisible by $N$,
%\[
%L=\{\kabcd\in \Mat_2(\Z):\;N\mid c\}
%\]
with the quadratic form given by the determinant.
The group $\Gamma_0(N)\times \Gamma_0(N)$ acts on $L$ by $(\gamma_1,\gamma_2).X=\gamma_1 X\gamma_2^{-1}$ leaving the quadratic form fixed. This gives rise to a homomorphism to $\Orth(L)^+$. The subgroup $\Gamma_1(N)\times \Gamma_1(N)$ is mapped to $\Gamma(L)$.
We obtain a homomorphism
\[
(\Gamma_0(N)\times \Gamma_0(N))/(\Gamma_1(N)\times \Gamma_1(N))\longrightarrow \Aut(L'/L).
%\cong\Aut(U(N)'/U(N)).
\]
Using the fact that the left hand side is isomorphic to $(\Z/N\Z)^\times\times (\Z/N\Z)^\times$,
it is easily seen that $\bar\varphi_r$ is in the image of this map.
\end{proof}

\begin{theorem}
\label{thm:main}
Assume that $L\cong D\oplus U(N)\oplus U$ for some positive definite lattice $D$ of dimension $n-2$.
Then the map $\Lambda: S_{\kappa,L}\to \calH^{1,1}(X_\Gamma)$ is injective.
\end{theorem}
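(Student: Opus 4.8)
The goal is to show $\ker(\Lambda)=0$, so suppose $g\in S_{\kappa,L}$ with $\Lambda(z,g)=0$ and denote its Fourier coefficients by $b(m,\mu)$. The plan is to read off strong vanishing of the $b(m,\mu)$ from the explicit Fourier expansion of Theorem~\ref{thm:lift}(i), and then promote this to $g=0$ by the newform theory of Section~\ref{sect:3}. I would first exploit that $L$ splits the \emph{unimodular} plane $U$: choosing the isotropic vector $\ell$ of Theorem~\ref{thm:lift} inside this summand gives $(\ell,L)=\Z$, so $\ell'$ may be taken in $L$, the lattice $K=(L\cap\ell^\perp)/\Z\ell$ is isometric to $D\oplus U(N)$, and the restriction $L'/L\to K'/K$ is an isomorphism. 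This collapses the inner $\delta$-sum in Theorem~\ref{thm:lift}(i) to a single term with $e(d(\delta,\ell'))=1$. Since the harmonic forms $dd^c\calV_\kappa(\pi|\lambda||y|,\pi(\lambda,y))e((\lambda,x))$ attached to distinct $\lambda\in K'$ carry distinct $x$-characters and are therefore linearly independent, $\Lambda(z,g)=0$ forces
\[
\sum_{d\mid\lambda}d^{\,n-1}\,b\bigl(Q(\lambda)/d^2,\ \lambda/d+K\bigr)=0
\]
for every $\lambda\in K'$ with $Q(\lambda)>0$, the sum running over $d$ with $\lambda/d\in K'$.

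Fixing a primitive $\mu_0\in K'$ with $Q(\mu_0)>0$ and putting $\lambda=r\mu_0$, this is a divisor-sum identity $\sum_{e\mid r}(r/e)^{n-1}c(e)=0$ in the quantities $c(e):=b(e^2Q(\mu_0),e\mu_0+K)$, valid for all $r\ge 1$. Since the arithmetic function $k\mapsto k^{n-1}$ is invertible under Dirichlet convolution, an elementary induction on $r$ yields $c(e)=0$ for all $e$, hence
\[
b\bigl(Q(\lambda),\ \lambda+K\bigr)=0\qquad\text{for all }\lambda\in K'\text{ with }Q(\lambda)>0 .
\]
In other words $b(m,\mu)=0$ whenever the coset $\mu+K\subset K'$ represents $m$.

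It remains to convert this into full vanishing, and here the arithmetic of $K$ enters. Since $K\cong D\oplus U(N)$ is indefinite of rank $n+1\ge 4$ and splits the isotropic plane $U(N)$, every coset of trivial content $\cont_e(\mu)=1$ (for the order-$N$ isotropic generator $e$ coming from $U(N)$) represents every admissible $m\equiv Q(\mu)\pmod\Z$; thus $g_\mu=0$ for all such $\mu$, i.e.\ $g$ is supported on $\bigcup_{p\mid N}I_p^\perp$. I would now iterate using the newform theory. By Theorem~\ref{thm:8} and Remark~\ref{rem:9}, once $g$ is supported on $\bigcup_{\Omega(d)=t}I_d^\perp$ one may write $g=\sum_{\Omega(d)\ge t}g_d\uparrow_{I_d}^A$ with $g_d\in M_{\kappa,A(d)}$, and for $\mu$ of content exactly $d_0$ the coefficient $b(m,\mu)$ equals the corresponding coefficient of $g_{d_0}$. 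Now $A(d)=I_d^\perp/I_d$ is the discriminant form of $D\oplus U(N/d)\oplus U$, which \emph{again} splits a unimodular hyperbolic plane, so the representability used above persists: the content-$d_0$ cosets of $K$ correspond to content-$1$ cosets of $D\oplus U(N/d_0)$ and are again fully represented. Hence each $g_{d_0}$ is forced onto the content-$\ge 1$ part of $A(d_0)$, which pushes $g$ to content level $t+1$. The filtration terminates at $d=N$, where $A(N)\cong D'/D$ is the discriminant form of $D\oplus U\oplus U$; there the representation is unconditional, so the surviving piece has all Fourier coefficients zero, and unwinding the decomposition gives $g=0$. The equivariance of Theorem~\ref{thm:lift}(iii), through the automorphisms $\varphi_r$ of Lemma~\ref{lem:act}, can be invoked to make the vanishing uniform across the content components; the case $N=1$ is precisely the two-hyperbolic-plane situation of \cite[Theorem~5.12]{Br1}.

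I expect the genuine difficulty to lie in the representability input: proving that a content-$1$ coset of an even indefinite lattice splitting an isotropic plane $U(M)$ represents \emph{every} admissible positive discriminant, with no exceptional small values, and that this representability transfers compatibly along the content filtration. This is a local--global statement for indefinite forms of rank $\ge 3$ (clean here since $K$ is isotropic), and it is the only step lying beyond the formal Fourier manipulation of the first two paragraphs and the newform bookkeeping supplied by Theorems~\ref{thm:6} and~\ref{thm:8}.
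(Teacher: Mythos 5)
Your first two paragraphs reproduce the paper's opening moves exactly: choose $\ell$ in the unimodular summand so that the $\delta$-sum in Theorem~\ref{thm:lift}(i) collapses, extract the divisor-sum identity, and invert it to get $b(Q(\lambda),\lambda+K)=0$ for every $\lambda\in K'$ of positive norm. The gap is in your third paragraph, and it is genuine. You assert that every coset $\mu$ with $\cont_e(\mu)=1$ represents \emph{every} admissible $m\equiv Q(\mu)\pmod{\Z}$, and you defer the proof to a local--global principle for indefinite lattices splitting an isotropic plane. That assertion is false in the generality in which you invoke it. Take $K=U(9)$ and $\mu=(2/9,0)+K$: then $\cont_e(\mu)=1$ and $Q(\mu)=0$, but the norms represented by the coset are exactly the integers of the form $(2+9a)b$, i.e.\ those possessing a divisor $\equiv 2\pmod 9$, and $m=3$ is not among them --- even though $3$ is represented over every $\Z_p$ and over $\R$. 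So ``splits an isotropic plane'' buys local representability but not global; to get a global statement you would need Eichler's strong approximation with congruence conditions (rank $\geq 4$, with spinor exceptions looming in rank $3$), which you neither prove nor can apply uniformly across the content filtration. Since you yourself flag this as the ``only step lying beyond'' the formal manipulations, the proposal as it stands does not prove the theorem.

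What your proposal is missing is the paper's first step, which makes all of this unnecessary: \emph{before} any Fourier analysis, use Lemma~\ref{lem:act} and the equivariance of Theorem~\ref{thm:lift}(iii) to decompose $\ker\Lambda$ into isotypical components for the action of $(\Z/N\Z)^\times$ through the transformations $\varphi_r\in\Orth(L)^+$, so that one may assume $\bar\varphi_r.g=\chi(r)g$ as in \eqref{eq:main1}. Within such a component it suffices to treat cosets normalized so that $(e,\mu)=d_0/N+\Z$ (not $rd_0/N$ with arbitrary $r$ coprime to $N$), because the $\varphi_r$-action transports vanishing across each $(\Z/N\Z)^\times$-orbit. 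For these normalized cosets, representability is not a local--global theorem but a triviality: pick $\lambda\in\mu+K$ with $(\lambda,e)=d_0/N$ exactly; then $\lambda+\frac{aN}{d_0}e$ lies in $\mu+I_{d_0}$ and has norm $Q(\lambda)+a$, so together with \eqref{eq:main2} and Remark~\ref{rem:9} the whole component $g_\mu$ vanishes. Your closing remark that the $\varphi_r$ ``can be invoked to make the vanishing uniform'' has the logic backwards: the equivariance is not a cosmetic uniformization at the end, it is precisely the device that reduces the representability problem to the case where an elementary shift by multiples of the isotropic vector $e$ settles it. Your newform bookkeeping (Theorem~\ref{thm:8}, Remark~\ref{rem:9}, induction on $\Omega(\cont_e(\mu))$) otherwise matches the paper; also note the small slip that $K\cong D\oplus U(N)$ has rank $n$, not $n+1$.
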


\begin{proof}
%1.
We put $A=L'/L$.
Let $g=\sum_{\mu\in A}\sum_m b(m,\mu)q^m\frake_\mu\in S_{\kappa,A}$ be an element in the kernel of $\Lambda$.
We denote by $g_\mu$ the components of $g$ with respect to the standard basis $(\frake_\mu)_{\mu\in A}$ of $\C[A]$.
We have to show that $g=0$.

1. We begin by noticing that $(\Z/N\Z)^\times$ acts on $S_{\kappa,A}$ via the automorphisms $\bar\varphi_r$, and it acts on $\calH^{1,1}(X_{\Gamma})$ via the transformations $\varphi_r$ for $r\in (\Z/N\Z)^\times$, see Lemma \ref{lem:act}. Moreover, in view of the third part of Theorem \ref{thm:lift}, the map $\Lambda$ is equivariant with respect to these actions.
Consequently, the action of $(\Z/N\Z)^\times$ preserves the kernel of $\Lambda$, and we obtain a decomposition of the kernel into isotypical components with respect to the characters of $(\Z/N\Z)^\times$. By orthogonality of characters, we may assume without loss of generality that
$g$ is contained in the $\chi$-isotypical component of $S_{\kappa,A}$ for some character
$\chi:(\Z/N\Z)^\times\to \C^\times$, that is,
\begin{align}
\label{eq:main1}
\bar\varphi_r .g=\chi(r)g, \quad r\in (\Z/N\Z)^\times.
\end{align}

2. To prove that $g=0$, we consider the Fourier expansion of $\Lambda(z,g)$.
Let $\ell,\ell'\in U\subset L$ be primitive isotropic vectors such that $(\ell,\ell')=1$. Then $U=\Z\ell+\Z\ell'$.
Let $V_0=V\cap \ell^\perp\cap \ell'{}^\perp$, and write $\calH$ for the corresponding tube domain realization of $\D$ as in \eqref{eq:td}.
Then $K=L\cap V_0\cong D\oplus U(N)$ and $L=K\oplus \Z\ell\oplus \Z\ell'$.
By means of the first part of Theorem \ref{thm:lift}, we see that for any $\lambda \in K'$ with $Q(\lambda)>0$,
we have
\begin{align}
\sum_{d\mid\lambda} d^{n-1}  b(Q(\lambda)/d^2, \lambda/d)=0.
\end{align}
Hence, by an inductive argument we find that
\begin{align}
\label{eq:main2}
b(Q(\lambda),\lambda)=0
\end{align}
for any $\lambda\in K'$ of positive norm. We
now show that this implies that {\em all\/} Fourier coefficients of $g$ vanish.

3. Let $e=\frac{1}{N}(0,1)\in U(N)'\subset L'$. This is a primitive isotropic vector of $U(N)'$
whose image in $A$ has order $N$. We use the newform theory developed in Section \ref{sect:3.1}
for the isotropic subgroup
\[
I_N =\langle e+L\rangle \subset A.
\]
For $d\mid N $ we also consider the  subgroup $I_d=\langle \frac{N}{d}e+L\rangle\subset A$.
We prove that all components $g_\mu$ vanish by induction on the number of prime divisors of
the content
\[
\cont_e(\mu)=\gcd(N(e,\mu),N)
\]
of $\mu$ with respect to $e$.

3.1. Let $\mu\in A$ with $\cont_e(\mu)=1$, that is, $(e,\mu) =\frac{r}{N}+\Z$ with $r\in (\Z/N\Z)^\times$. Using the action of $(\Z/N\Z)^\times$ on $g$ and \eqref{eq:main1}, we may assume without loss of generality  that $(e,\mu)= \frac{1}{N}+\Z$.
Then there exists a $\lambda\in \mu+K$ such that $(\lambda,e)=\frac{1}{N}$,
and for any $a\in \Z$ we have
\begin{align*}
\lambda +aNe\in \mu,\qquad Q(\lambda+aNe)=Q(\lambda)+a.
\end{align*}
But now \eqref{eq:main2} implies that the component $g_\mu$ vanishes identically.

3.2. Let $t>0$ and assume that $g_\mu=0$ for all $\mu\in A$ with $\Omega(\cont_e(\mu))<t$.
This means that $g$ is supported on
\begin{align*}
\bigcup_{\substack{d\mid N\\ \Omega(d)=t}} I_d^\perp  =\bigcup_{\substack{d\mid N\\ \Omega(d)=t}} \{\mu\in A:\;d\mid \cont_e(\mu) \}.
\end{align*}
According to Theorem \ref{thm:8}
there exist cusp forms $g_d\in S_{k,A(d)}$ such that
\begin{align}
\label{eq:main3}
g=\sum_{\substack{d\mid N\\ \Omega(d)\geq t}} g_d\uparrow_{I_d}^A,
\end{align}
where $A(d)=I_d^\perp/I_d$.

Let $\mu\in A$ with $\Omega(\cont_e(\mu))=t$ and put $d_0=\cont_e(\mu)$. There exists  a $r\in (\Z/N\Z)^\times$ such that
$(e,\mu)=\frac{r d_0}{N}+\Z$.
In view of \eqref{eq:main1}, we may assume that $(e,\mu)=\frac{d_0}{N}+\Z$.
The identity \eqref{eq:main3} and Remark \ref{rem:9} imply that
\[
g_\mu = (g_{d_0}\uparrow_{I_{d_0}}^A)_\mu
\]
and $g_{\mu+\mu'}=g_\mu$ for all $\mu'\in I_{d_0}$.
Since $(e,\mu)=\frac{d_0}{N}+\Z$, there exists a $\lambda\in \mu+K$ such that $(\lambda,e)=\frac{d_0}{N}$.
Moreover, for any $a\in \Z$ we have
\begin{align*}
\lambda +\frac{aN}{d_0}e\in \mu+I_{d_0},\qquad Q(\lambda+\frac{aN}{d_0}e)=Q(\lambda)+a.
\end{align*}
Now \eqref{eq:main2} implies that the component $g_\mu$ vanishes identically.
This shows that  $g_\mu=0$ for all $\mu\in A$ with $\Omega(\cont_e(\mu))=t$.
The theorem follows by induction.
\end{proof}

\begin{proof}[Proof of Theorem \ref{thm:mainintro}]
The assertion follows from Theorem \ref{lift+} by means of Theorem \ref{thm:main}.
\end{proof}

\begin{comment}
By means of Theorem \ref{lift+} we obtain the following converse theorem.

\begin{corollary}
\label{cor:main}
Assume that $L\cong D\oplus U(N)\oplus U$ for some positive definite
lattice $D$ of dimension $n-2\geq 1$, and write $A$ for its
discriminant group.  Then every meromorphic modular form $F$ with
respect to $\Gamma$ whose divisor is a linear combination of special
divisors as in \eqref{eq:divf}
%($c(\lambda)\in \Z$ with $c(\lambda)=c(-\lambda)$ and $c(\lambda)=0$ for all but finitely many $\lambda$),
is  (up to a non-zero constant factor)
the Borcherds lift $\Psi(z,f)$ of a weakly
holomorphic modular form $f\in M_{1-n/2,A^-}^{!}$.
% with integral principal part.
\end{corollary}

We also have the following slightly more general variant.
\end{comment}

\begin{proof}[Proof of Corollary \ref{cor:main2intro}]
Since $K$ is isotropic, there exists a primitive isotropic vector $\ell\in K$. Let $N_\ell\in \Z$ be a generator of the ideal $(\ell,K)\subset\Z$. It is easily seen that $N_\ell$ divides the level $N_K$ of $K$. We put $t=N_K/N_\ell$.
The sublattice
\[
K_0:=\{x\in K:\; (\ell,x)\in N_K\Z\}
\]
has index $t$ in $K$. Its dual is given by $K_0'=K'+\Z\frac{\ell}{N_K}$.
This implies that $K_0$ has also level $N_K$. It contains $\ell$ as a primitive isotropic vector and $(\ell,K_0)=N_K\Z$. Hence, according to Lemma  \ref{lem:6.1}, the lattice $K_0$ splits $U(N_K)$ as an orthogonal summand. Now the assertion follows from Theorem \ref{thm:mainintro}.
\end{proof}

\begin{comment}
\begin{corollary}
\label{cor:main2}
Assume that $L\cong K\oplus U$ for some isotropic lattice $K$ of signature $(n-1,1)$ where $n\geq 3$.
Let $M\subset K$ be a sublattice of finite index that splits a rescaled hyperbolic plane $U(N)$ over $\Z$ and put $B=M'/M$.
Then every meromorphic modular form $F$ with
respect to $\Gamma=\Gamma(L)$ whose divisor is a linear combination of special
divisors as in \eqref{eq:divf}
is  (up to a non-zero constant factor)
the Borcherds lift $\Psi(z,f)$ of some $f\in M_{1-n/2,B^-}^{!}$.
%h with integral principal part.
\end{corollary}

\begin{proof}
Since $M\oplus U \subset K\oplus U \cong L$, we may view $\Gamma(M\oplus U)$ as a subgroup of $\Gamma(L)$. Therefore $F$ is also a meromorphic modular form for $\Gamma(M\oplus U)$, and its divisor
is a linear combination of special divisors for $M\oplus U$. Hence the assertion follows from
Corollary \ref{cor:main}.
\end{proof}
\end{comment}

\begin{remark}
Note that the proof of Corollary \ref{cor:main2intro} gives an explicit construction of a sublattice $K_0\subset K$ as required.
% (which has index $t$).
%It also in view of Lemma \ref{lem:6.1} there always exists a sublattice $M\subset K$ as required in Corollary \ref{cor:main2}. Moreover, there exists such a sublattice $M$ which has the same level as $K$.
\end{remark}

\section{Lattices of prime level}

In this section we consider lattices of prime level. In particular, we prove a converse theorem for lattices of prime level that do not necessarily split a hyperbolic plane over $\Z$. We continue to use the notation of Section \ref{sect:4}.

\label{sect:6}
\subsection{Examples for which $\Lambda$ is not injective}
\label{sect:5.2}

For lattices $L$ that do not split a hyperbolic plane over $\Z$, the map $\Lambda:S_{\kappa,L}\to \calH^{1,1}(X_\Gamma)$ is not injective in general, since $\xi(N_{2-\kappa,L^-})$ can be non-trivial.
Here we give a direct construction of elements in the kernel for certain lattices.

Assume that $n\equiv 2\pmod{8}$, and let
$I\!I_{n,2}$ be the even unimodular lattice of signature $(n,2)$.
For a prime $p$, we
consider the lattice $L=I\!I_{n,2}(p)$
obtained by rescaling
%the quadratic form on $I\!I_{n,2}$
by $p$.
%Then
%\[
%L(p)\cong E_8(p)^{\frac{n-2}{8}}\oplus U(p)\oplus U(p).
%\]
%The corresponding discriminant group $A$ is a vector space over $\F_p$ of dimension $n+2$.
Then $A=L'/L\cong \F_p^{n+2}$ and $\sig(A)\equiv 0\pmod{8}$.
%We put $\kappa=\frac{n+2}{2}$.

For $M=\kabcd\in \Gl_2^+(\R)$ we define the Petersson slash operator in
integral weight $k$ on functions on $\H$ by
\begin{align}
(g\mid_k M)(\tau) = \det(M)^{k/2}(c\tau+d)^{-k} g(M\tau).
\end{align}
Hence, scalar matrices act trivially.
We denote by $W_p=\kzxz{0}{-1}{p}{0}$ the Fricke involution on the
space $M_k(\Gamma_0(p))$ of scalar valued modular forms of weight
$k$ for the group $\Gamma_0(p)$.
Recall that the Hecke operator $U_p$ acts on  $g=\sum_l a(l)q^l\in M_k(\Gamma_0(p))$ by
\begin{align}
g\mid U_p = \sum_l a(pl)q^l.
\end{align}

The restriction of $\rho_L$ to $\Gamma_0(p)$
acts trivially on the vector $\frake_0\in \C[A]$.
Hence, if $g\in
M_k(\Gamma_0(p))$, then
\begin{align}
\label{eq:lift}
\vec g = \sum_{\gamma\in \Gamma_0(p)\bs \Sl_2(\Z)}(g\mid_k \gamma) \rho_L^{-1}(\gamma)\frake_0
\end{align}
belongs to $M_{k,L}$. It is invariant under the action of $\Aut(A)$. Note that according to \cite[Corollary 5.5]{Sch}, every element of $M_{k,L}$ which is invariant under $\Aut(A)$, is the lift of a scalar valued form.
The Fourier expansion of $\vec g $ is computed (in greater generality) in \cite[Section 6]{Sch:Inv}.
If we write
%$g=\sum_l a(l)q^l$ and
$g\mid W_p = \sum_l \tilde a(l)q^l$, then for $\mu\in A$ and $m\in Q(\mu)+\Z$ the
$(m,\mu)$-th coefficient of $\vec g $
is given by
\begin{align}
\label{eq:liftfor}
\vec a (m,\mu) = \begin{cases}
p^{-k/2-n/2}\tilde a(pm),&\text{if $\mu\neq 0$,}\\[1ex]
a(m)+p^{-k/2-n/2} \tilde a(pm),&\text{if $\mu=0$.}
\end{cases}
\end{align}

\begin{proposition}
\label{prop:ker}
%Let $L=I\!I_{n,2}(p)$ be as above and put
Let $0\neq g\in S_\kappa(\Gamma_0(p))$, and assume that $g\mid U_p = -p^{\frac{\kappa}{2}-1} g\mid W_p$.
Then the corresponding vector valued form $\vec g\in S_{\kappa,L}$
does not vanish and $\Lambda(\vec g) =0$.
\end{proposition}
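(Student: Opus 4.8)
The plan is to prove the two assertions separately: that $\vec g\neq 0$, and that $\Lambda(\vec g)=0$. The non-vanishing is immediate from \eqref{eq:liftfor}. Since $A\cong\F_p^{n+2}$ is non-degenerate of rank $n+2\geq 3$, it contains a non-zero isotropic element $\mu$; for such $\mu$ we have $Q(\mu)=0$, so $\vec a(m,\mu)=p^{-\kappa/2-n/2}\tilde a(pm)$ for all $m\in\Z$. If $\vec g=0$ this forces $\tilde a(pm)=0$ for all $m\in\Z$, and then the component $\mu=0$ gives $\vec a(m,0)=a(m)+p^{-\kappa/2-n/2}\tilde a(pm)=a(m)=0$ for every $m$, i.e.\ $g=0$, contrary to hypothesis. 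Hence $\vec g\neq 0$.

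For the vanishing of $\Lambda(\vec g)$ I would use the explicit Fourier expansion of Theorem~\ref{thm:lift}(i) with $g=\vec g$ (so $b(m,\mu)=\vec a(m,\mu)$). Since the forms $dd^c\calV_\kappa(\dots)e((\lambda,x))$ attached to distinct $\lambda$ are linearly independent, it suffices to show that for every $\lambda\in K'$ with $Q(\lambda)>0$ the coefficient
\[
C(\lambda)=\sum_{d\mid\lambda}d^{n-1}\sum_{\substack{\delta\in L'/L\\ \delta\mid L\cap\ell^\perp=\lambda/d+K}}e(d(\delta,\ell'))\,\vec a(Q(\lambda)/d^2,\delta)
\]
vanishes. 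To compute the inner sum I would exploit the structure $L=I\!I_{n,2}(p)$: writing $M=I\!I_{n,2}$, choose $\ell\in M$ primitive isotropic and, after adjustment, $w\in M$ isotropic with $(\ell,w)_M=1$, so that $M=M_0\perp(\Z\ell\oplus\Z w)$ with $M_0\cong I\!I_{n-1,1}$, and $L=M_0(p)\perp U(p)$, $K\cong M_0(p)$, $\ell'=\tfrac1p w\in L'$. Writing $\delta=\tfrac1p(v_0+\alpha\ell+\beta w)+L$, a short computation shows that the condition $\delta\mid L\cap\ell^\perp=\lambda/d+K$ fixes $v_0\bmod p$ and forces $\beta\equiv 0\bmod p$, while leaving the $\ell$-component $\alpha\in\Z/p\Z$ free. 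Thus the admissible $\delta$ are exactly the $p$ classes $\delta_\alpha$, they satisfy $(\delta_\alpha,\ell')=\alpha/p$, and $\delta_\alpha=0$ is possible only when $\lambda/d\in K$ (at $\alpha=0$).

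Carrying out the $\alpha$-sum via \eqref{eq:liftfor} then gives, in all cases, $S_d:=\sum_\alpha e(d\alpha/p)\vec a(Q(\lambda)/d^2,\delta_\alpha)$ equal to an $a$-term present exactly when $\lambda/d\in K$, plus $p^{-\kappa/2-n/2}\tilde a(pm_d)\sum_\alpha e(d\alpha/p)$. The character sum $\sum_\alpha e(d\alpha/p)$ vanishes unless $p\mid d$, in which case it equals $p$. Writing $m_d=Q(\lambda)/d^2$ one therefore obtains
\[
C(\lambda)=\sum_{\substack{d\mid\lambda\\ \lambda/d\in K}}d^{n-1}a(m_d)\;+\;\sum_{\substack{d\mid\lambda\\ p\mid d}}d^{n-1}\,p^{1-\kappa/2-n/2}\,\tilde a(pm_d).
\]
In the second sum I would substitute $d=pe$ and apply the hypothesis in the form $\tilde a(l)=-p^{1-\kappa/2}a(pl)$, which yields $\tilde a(pm_{pe})=-p^{1-\kappa/2}a(m_e)$; the accumulated power of $p$ is $n/2+1-\kappa$, which is $0$ precisely because $\kappa=1+n/2$. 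Since the condition $pe\mid\lambda$ is equivalent to $\lambda/e\in K$, the second sum equals $-\sum_{\lambda/e\in K}e^{n-1}a(m_e)$, the exact negative of the first. Hence $C(\lambda)=0$ for all $\lambda$.

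Finally, all Fourier coefficients of $\Lambda(z,\vec g)$ indexed by $\lambda\neq 0$ vanish; since $\vec g$ is a cusp form, $\Lambda(\vec g)$ is a square integrable harmonic $(1,1)$-form whose remaining $x$-independent term $\Lambda_0(y,\vec g)$ is controlled by the vanishing constant terms of $\vec g$, so it vanishes as well and $\Lambda(\vec g)=0$. I expect the main obstacle to be the bookkeeping of the inner $\delta$-sum---correctly identifying which classes satisfy the restriction condition and tracking when $\delta_\alpha=0$---together with the check that the power of $p$ collapses exactly when $\kappa=1+n/2$; the vanishing of $\Lambda_0$ for cuspidal input is the one place where I would rely on the general properties of the lift rather than on the coefficient computation.
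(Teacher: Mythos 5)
Your two computations --- the non-vanishing of $\vec g$ and the vanishing of all Fourier coefficients of $\Lambda(z,\vec g)$ attached to $\lambda\in K'$ with $Q(\lambda)>0$ --- are correct, and they are exactly the details the paper omits (its proof is the single sentence that the claim follows from the Fourier expansion in Theorem~\ref{thm:lift}). I checked the key steps: the classes $\delta$ with $\delta\mid L\cap\ell^\perp=\lambda/d+K$ are indeed the $p$ classes $\delta_\alpha$ with $(\delta_\alpha,\ell')=\alpha/p$, and $\delta_\alpha=0$ only when $\lambda/d\in K$ and $\alpha=0$ (this bookkeeping is consistent with the paper's own use of the expansion in \eqref{thm:mt2}); the character sum gives your formula for $C(\lambda)$; the substitution $d=pe$ together with $pK'=K$ (so $\lambda/(pe)\in K'$ iff $\lambda/e\in K$) and the relation $\tilde a(l)=-p^{1-\kappa/2}a(pl)$ makes the $p$-power collapse precisely because $\kappa=1+n/2$, so the two sums cancel. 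This is certainly the intended argument.

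The one genuine gap is your treatment of the constant term. The assertion that $\Lambda_0(y,\vec g)$ ``is controlled by the vanishing constant terms of $\vec g$'' is not a valid principle: for regularized theta lifts of this type the $x$-independent Fourier term is in general built from \emph{all} components of the input --- it involves a lift attached to the smaller Lorentzian lattice $K$ --- and not just from the constant term of the input, so the vanishing of the $C(\lambda)$ does not by itself yield $\Lambda(\vec g)=0$. Two ways to close this. (a) Use the explicit formula for $\Lambda_0$ in \cite[Theorem 5.9]{Br1}, which the present paper deliberately does not reproduce, and check directly that it vanishes for $\vec g$. (b) Argue abstractly: once every coefficient with $\lambda\neq 0$ vanishes, $\Lambda(z,\vec g)$ is invariant under all real translations $z\mapsto z+v$ of the tube domain as well as under $\Gamma$; the closed subgroup these generate is all of $\Orth(V)(\R)^+$ (Borel density plus simplicity), so $\Lambda(z,\vec g)=c\,\Omega$ for some constant $c$. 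Then $c=0$ because the class of $\Lambda(\vec g)$ in $H^2(X_\Gamma,\C)$ is that of $2Z(f)$ with $\xi(f)=\vec g$ cuspidal, and its pairing against $\Omega^{n-1}$ vanishes (the $\Omega$-degrees of the $Z(m,\mu)$ are the coefficients of an Eisenstein series of weight $\kappa$, hence pair to zero with a cusp form --- equivalently, use the adjointness of $\Lambda$ with the Kudla--Millson lift from \cite{BF}), whereas $\int_{X_\Gamma}\Omega^{n}\neq 0$. With either repair your proof is complete.
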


\begin{proof}
The proposition can be proved using the Fourier expansion of $\Lambda(\vec g)$ given in Theorem~\ref{thm:lift}.
We omit the details.
% See the version "converse3.tex"
\end{proof}

\begin{remark}
\label{rem:ker}
  Let $g\in S_k(\Gamma_0(p))$ be a newform with the property $g\mid W_p =\pm g$.
  Then, according to \cite[Theorem~9.27]{Kn}, we have $g\mid U_p =
  \mp p^{\frac{k}{2}-1} g$. Hence, there are many cusp forms
  satisfying the hypothesis of Proposition \ref{prop:ker}.
\end{remark}

\subsection{The converse theorem for lattices of prime level}

%We begin with some preparations.

Recall that for an isotropic vector $u\in V$ and $v\in V$ orthogonal to $u$, the Eichler element $E(u,v)\in \Orth(V)^+$ is defined by
\begin{align}
\label{eq:eichler}
E(u,v)(a)=a-(a,u)v+(a,v)u-Q(v)(a,u)u
\end{align}
for $a\in V$. It is easily seen that
if $u,v\in L$, then $E(u,v)\in \Gamma(L)$.

\begin{lemma}
\label{lem:eichler}
Let $L$ be an even lattice of level $N$.
Let $u\in L$ be an isotropic vector such that $(u,L)=N\Z$. If $v\in L'\cap u^\perp$, then $E(u,v)\in \Orth(L)^+$.
\end{lemma}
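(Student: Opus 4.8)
The plan is to reduce everything to showing that $E(u,v)$ maps $L$ into itself. Indeed, by the very definition of an Eichler element we already know $E(u,v)\in\Orth(V)^+\subseteq\Orth(V)(\R)^+$, so once we verify $E(u,v)\in\Orth(L)$ we immediately obtain $E(u,v)\in\Orth(L)\cap\Orth(V)(\R)^+=\Orth(L)^+$. Before the main computation I would record the two arithmetic inputs furnished by the hypotheses: since $u$ is isotropic with $(u,L)=N\Z$, every $a\in L$ satisfies $(a,u)\in N\Z$; and since $L$ has level $N$, the standard consequences $NL'\subseteq L$ and $NQ(x)\in\Z$ for all $x\in L'$ hold (both follow from $NQ\in\Z$ on $L'/L$ by polarization).

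The heart of the argument is a direct term-by-term inspection of
\[
E(u,v)(a)=a-(a,u)v+(a,v)u-Q(v)(a,u)u
\]
for $a\in L$. The term $a$ lies in $L$ trivially, and $(a,v)u\in L$ because $(a,v)\in\Z$ (as $a\in L$ and $v\in L'$) and $u\in L$. The two remaining terms are the delicate ones, and this is exactly where the assumption $(u,L)=N\Z$ is indispensable. Writing $(a,u)=Nk$ with $k\in\Z$, the term $-(a,u)v=-k(Nv)$ lies in $L$ since $Nv\in L$, and the term $-Q(v)(a,u)u=-k\,(NQ(v))\,u$ lies in $L$ since $NQ(v)\in\Z$ and $u\in L$. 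Hence $E(u,v)(a)\in L$, so $E(u,v)(L)\subseteq L$.

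To promote this inclusion to the equality $E(u,v)(L)=L$ I would use that $E(u,v)^{-1}=E(u,-v)$, which is checked from the composition law of Eichler transformations (or by a short direct computation exploiting $(u,v)=0$ and $Q(u)=0$). Since $-v\in L'\cap u^\perp$ as well, the same inspection gives $E(u,-v)(L)\subseteq L$; combining the two inclusions yields $E(u,v)(L)=L$, i.e. $E(u,v)\in\Orth(L)$. Together with the positivity already built into the definition of the Eichler element, this establishes $E(u,v)\in\Orth(L)^+$.

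The only real obstacle is careful bookkeeping rather than any conceptual difficulty: one must track precisely which of $L$ and $L'$ each vector belongs to and confirm that the single factor of $N$ coming from $(u,L)=N\Z$ is exactly enough to clear the denominators introduced by $v\in L'$ and by $Q(v)\in\frac1N\Z$. No deeper input beyond the level hypothesis is needed.
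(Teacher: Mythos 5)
Your proof is correct and follows essentially the same route as the paper's: the paper's (very terse) proof likewise extracts $NL'\subset L$ from the level hypothesis and then checks directly from the definition of $E(u,v)$ that each term lands in $L$, using $(a,u)\in N\Z$ to cancel the denominators coming from $v\in L'$ and $Q(v)\in\frac{1}{N}\Z$. Your additional care in passing from $E(u,v)(L)\subseteq L$ to equality via $E(u,v)^{-1}=E(u,-v)$, and in invoking $E(u,v)\in\Orth(V)^+$ for the positivity condition, simply makes explicit what the paper leaves implicit.
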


\begin{proof}
Since $L$ has level $N$, we have $NL'\subset L$.
Hence the assertion follows immediately from the definition \eqref{eq:eichler}.
\end{proof}

%For the rest of this section, let $L$ be an even lattice of prime
%level $p$ and signature $(n,2)$ with $n\geq 4$.   Put $A=L'/L$.  To
%prove a converse theorem in this case, the following proposition is
%vital.

\begin{proposition}
\label{prop:keyp}
Let $L$ be an even lattice of prime
level $p$ and signature $(n,2)$ with $n\geq 4$.
Let $g=\sum_{\mu\in A}\sum_m b(m,\mu)q^m\frake_\mu\in S_{\kappa,L}$ be an element in the kernel of $\Lambda$.
Let $u\in L$ be primitive isotropic, and assume $(u,L)=p\Z$. Then for every $v,\lambda\in L'\cap u^\perp$
%K':=L'\cap u^\perp/\Z \frac{u}{p}$
we have
\[
b(Q(\lambda),E(u,v)\lambda)=b(Q(\lambda),\lambda+(\lambda,v)u)= b(Q(\lambda),\lambda).
\]
\end{proposition}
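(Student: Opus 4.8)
The plan is to recast the proposition as a statement about the Fourier coefficients of $g$ along a single coset, and then to extract that statement from $\Lambda(g)=0$ by playing the divisor sums in Theorem~\ref{thm:lift}(i) against the growth of the coefficients. First, the left-hand equality is immediate from \eqref{eq:eichler}: since $(\lambda,u)=0$ for $\lambda\in L'\cap u^\perp$, we get $E(u,v)\lambda=\lambda+(\lambda,v)u$. Because $pL'\subseteq L$ we have $(\lambda,v)\in\tfrac1p\Z$, and $\tfrac1p u\in L'$ (here $(u,L)=p\Z$) defines a class of order $p$ in $L'/L$ (as $u$ is primitive). Hence $E(u,v)\lambda$ and $\lambda$ lie in the same coset of the cyclic group $\langle\tfrac1p u+L\rangle$, and by Lemma~\ref{lem:eichler} $E(u,v)\in\Orth(L)^+$ so that $E(u,v)\lambda\in L'$ and the coefficient is well defined. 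The whole statement therefore reduces to showing that, for every $\lambda\in L'\cap u^\perp$, the coefficient $b(Q(\lambda),\cdot)$ is constant on the coset $\lambda+\langle\tfrac1p u\rangle$.

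To obtain coordinates I would use the tube domain model of Theorem~\ref{thm:lift} with $\ell=u$. By Lemma~\ref{lem:6.1} (applied with $N=p$) the lattice splits as $L\cong K_0\oplus U(p)$; in particular there is $\ell'\in L'$ with $(u,\ell')=1$, and $K=(L\cap u^\perp)/\Z u$ indexes the Fourier expansion. A direct check shows that the classes $\delta\in L'/L$ restricting to a fixed $\beta\in K'$ on $L\cap u^\perp$ are exactly the $p$ classes $\lambda+\tfrac jp u$, $j$ mod $p$, and that $(\lambda+\tfrac jp u,\ell')\equiv(\lambda,\ell')+\tfrac jp$. Writing $b_j=b(Q(\beta),\lambda+\tfrac jp u)$ and inserting $\Lambda(g)=0$ into Theorem~\ref{thm:lift}(i), the vanishing of the coefficient of $dd^c\calV_\kappa(\pi|\beta||y|,\pi(\beta,y))\,e((\beta,x))$ gives, for each $\beta\in K'$ with $Q(\beta)>0$, a linear relation among the coefficients $b(Q(\beta)/d^2,\delta)$ weighted by $d^{\,n-1}e(d(\delta,\ell'))$.

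The key is to turn these relations into the constancy of the $b_j$. I would apply the relation not to $\beta$ but to $q\beta$ for a large rational prime $q\neq p$, taking first $\beta$ primitive in $K'$. Then the divisor sum runs only over $d\in\{1,q\}$: the term $d=q$ reproduces the coset of $\beta$ at norm $Q(\beta)$ and equals $q^{\,n-1}e(q(\lambda,\ell'))\sum_{j}e(qj/p)\,b_j$, while the term $d=1$ involves only coefficients at the far larger norm $q^2Q(\beta)$. Invoking a bound for the Fourier coefficients of cusp forms of weight $\kappa$ — the elementary Hecke bound $b(m,\delta)\ll m^{\kappa/2}$ already suffices for $n\geq 5$, and a Rankin/Ramanujan-type improvement covers $n=4$ — the $d=1$ term is $o(q^{\,n-1})$, which is exactly where the hypothesis $n\geq 4$ enters. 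Dividing by $q^{\,n-1}$ and letting $q\to\infty$ through primes in a fixed class $t\bmod p$ (Dirichlet), the surviving quantity is a nonzero multiple of $\sum_j e(tj/p)\,b_j$, so this sum vanishes for every $t\in(\Z/p\Z)^\times$. As these are all the nonzero frequencies, Fourier inversion on $\Z/p\Z$ forces $(b_j)_j$ to be constant, which is the desired coset-constancy.

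The main obstacle I anticipate is twofold. Analytically, the case $n=4$ is borderline and genuinely needs a bound strictly better than Hecke's, so one must import a Rankin- or Ramanujan-type estimate for these (possibly half-integral weight) vector valued cusp forms; this single input is what pins the threshold at $n\geq 4$. Combinatorially, for $\beta$ of content $c>1$ the divisor sum for $q\beta$ also contains the terms $d=qe$ with $e\mid c$, all of the same order $q^{\,n-1}$, which feed in the cosets attached to $\beta/e$. I would dispose of these by an induction on the content $c$: when treating $\beta$, the cosets for $\beta/e$ with $e>1$ have smaller content and are already known to be constant, hence contribute nothing at nonzero frequencies, and the frequency condition is isolated for the coset of $\beta$ itself. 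It is worth noting that equivariance (Lemma~\ref{lem:eichler}, Theorem~\ref{thm:lift}(iii)) is used only to phrase the conclusion through $E(u,v)$; the substance of the proof is the coset-constancy of the coefficients, which follows from $\Lambda(g)=0$ alone.
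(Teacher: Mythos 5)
Your analytic mechanism is correct and is in substance the one the paper uses: the relation obtained from Theorem \ref{thm:lift}(i) at $q\beta$, primes $q$ running through a fixed class modulo $p$ (Dirichlet), and the Weil bound with the threshold $n>3$ are exactly the paper's tools. Where the paper packages the finite Fourier analysis on $\Z/p\Z$ as the decomposition of $g$ into isotypical components under the Eichler transvection $E(u,v)$ (via the equivariance in Theorem \ref{thm:lift}(iii)), you invert the frequencies directly on the coefficients; that repackaging is legitimate and indeed makes the equivariance dispensable. The genuine gap is in your induction on the content $c$. You claim that for $e\mid c$, $e>1$, the term $d=qe$ ``contributes nothing at nonzero frequencies'' because the coset over $\beta/e$ is already known to be constant. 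That term enters the relation with frequency $qe\bmod p$: if $p\nmid e$ this is a nonzero frequency and your claim is right, but if $p\mid e$ it is the \emph{zero} frequency, at which a constant coset contributes $p\,e^{n-1}$ times its common value --- a quantity of the same order $q^{n-1}$ as the main term (moreover its phase $e(qe(\delta,\ell'))$ equals $1$ for such $e$, since $(\delta,\ell')\in\tfrac1p\Z$, so it does not even oscillate in $q$). The limit relation then only yields $e(t(\lambda,\ell'))\sum_j e(tj/p)\,b_j+C=0$ with $C$ independent of $t$, which does not force the frequencies to vanish. This cannot be patched in the generality you claim, because the statement you reduced to --- constancy of $b(Q(\lambda),\cdot)$ on the coset $\lambda+\langle\tfrac1p u\rangle$ for \emph{every} $\lambda\in L'\cap u^\perp$ --- is false: for the kernel elements $\vec g$ of Proposition \ref{prop:ker} (with $L=I\!I_{n,2}(p)$, which satisfies all hypotheses of the present proposition), formula \eqref{eq:liftfor} gives $b(m,0)-b(m,ju/p)=a(m)$ for $j\not\equiv 0\pmod p$, and this is nonzero for suitable $m=Q(\lambda)$ with $\lambda\in L\cap u^\perp$. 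These are precisely cosets over $\beta\in K$, whose content is divisible by $p$.

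The repair is the observation with which the paper opens its own proof: the proposition is nontrivial only if $(\lambda,v)\notin\Z$ for some $v\in L'\cap u^\perp$, and in that case the content $d_0$ of the image of $\lambda$ in $K'$ is automatically coprime to $p$. Indeed, if $p\mid d_0$ one can write $\lambda=p\mu+\tfrac jp u$ with $\mu\in L'\cap u^\perp$ and $j\in\Z$, whence $(\lambda,v)=p(\mu,v)\in\Z$ because $pL'\subseteq L$. So coset-constancy is only needed for contents prime to $p$; restricting your induction to those, every divisor $e\mid c$ is coprime to $p$, all frequencies $qe$ are nonzero modulo $p$, and your argument closes. With this single restriction your proof is complete and, apart from replacing the isotypical decomposition by direct finite Fourier inversion, coincides with the paper's.
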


%$\Lambda: S_{\kappa,L'/L}\to \calH^{1,1}(X_\Gamma)$ is injective.

\begin{proof} Put $A=L'/L$.
If $(v,\lambda)\in \Z$, then we have nothing to show. So we assume that $(v,\lambda)\in r/p+\Z$ with $r\in (\Z/p\Z)^\times$.
%In particular, $\bar v$ and $\bar \lambda$ are nontrivial in $A$.

We consider the Eichler transformation $E:=E(u,v)$, which belongs to $\Orth(L)^+$ according to Lemma \ref{lem:eichler}.
We have $E(\lambda)= \lambda+(\lambda,v)u$, and the image of $E$ generates a subgroup $G\subset\Aut(A)$ which is isomorphic to $\Z/p\Z$.
The group $G$ acts on $S_{\kappa,A}$ and on $\calH^{1,1}(X_\Gamma)$,
and in view of the third part of Theorem \ref{thm:lift}, the map $\Lambda$ is equivariant with respect to these actions.
%In view of the third part of Theorem \ref{thm:lift}, for every $a\in \Z$, the translate $E^a.g$ also belongs to the kernel of $\Lambda$.
Consequently, the action of $G$ preserves the kernel of $\Lambda$, and we obtain a decomposition of the kernel into isotypical components with respect to the characters of $G$.

For $s\in \Z/p\Z$ the $s$-isotypical component of $g$ is given by
\begin{align}
g_s=\sum_{a\;(p)} e(-as/p)E^a .g,
\end{align}
in particular, we have $E.g_s= e(s/p)g_s$.
If we write $g_s=\sum_{\mu\in A}\sum_m b_s(m,\mu)q^m\frake_\mu$, we have
\begin{align}
\label{eq:int1}
b_s(Q(\lambda),\lambda+(\lambda,v)u)= e(s/p)\cdot b_s(Q(\lambda),\lambda).
\end{align}
It suffices to show that $b_s(Q(\lambda),\lambda)=0$ for all $s\in (\Z/p\Z)^\times$.

Let $K=(L\cap u^\perp)/\Z u$.  We write the image of $\lambda$
%Since $(\lambda,v)\notin\Z$, the image of $\lambda$
in $K'\cong (L'\cap u^\perp)/\Z\frac{u}{p}$ as $d_0\lambda_0$, with a primitive vector  $\lambda_0\in K'$ and $d_0\in \Z_{>0}$. Since $(\lambda,v)\notin\Z$, the number $d_0$ is coprime to $p$.
We choose an auxiliary  prime $q$ coprime to $pd_0$,
and we put
$\lambda_1=qd_0\lambda_0\in K'$.
We employ Theorem \ref{thm:lift} (i) with $\ell=u$, to deduce that the $\lambda_1$-th Fourier coefficient of $\Lambda(g_s)$ vanishes, that is,
\[
\sum_{d\mid qd_0}d^{n-1}\sum_{a\;(p)} e(ad/p) b_s(Q(\lambda_1/d),\lambda_1/d+a\frac{u}{p}) =0,
\]
or equivalently,
\[
\sum_{d\mid qd_0}d^{n-1}\sum_{a\;(p)} e\big(a(\lambda_1,v)\big) b_s\big(Q(\lambda_1/d),\lambda_1/d+a(\lambda_1/d,v)u\big) =0.
\]
Using \eqref{eq:int1} and the fact that  $(\lambda_1,v)\equiv qr/p\pmod{\Z}$, we find
\[
\sum_{d\mid qd_0}d^{n-1}\sum_{a\;(p)} e\big(a(qr+s)/p\big) b_s\big(Q(\lambda_1/d),\lambda_1/d\big) =0.
\]
If $s\in (\Z/p\Z)^\times$ and  $qr\equiv -s\pmod{p}$, we obtain
\[
\sum_{d\mid qd_0}d^{n-1} b_s\big(Q(\lambda_1/d),\lambda_1/d\big) =0.
\]
If we split the sum over the divisors of $qd_0$ into a sum over the divisors coprime to $q$ and a sum over the divisors divisible by $q$, we obtain
\begin{align}
\label{eq:int2}
\sum_{d\mid d_0}d^{1-n}b_s\big(Q(qd\lambda_0),qd\lambda_0\big)
+q^{n-1}\sum_{d\mid d_0}d^{1-n}b_s\big(Q(d\lambda_0),d\lambda_0\big)=0.
\end{align}

By Dirichlet's theorem, there are infinitely many primes $q$ satisfying $qr\equiv -s\pmod{p}$.
If $q$ goes to infinity, then
the Weil bound for the coefficients of the cusp form $g_s$ of weight $\kappa=1+n/2$ implies that
for any $\eps>0$ we have
$
b_s(Q(q\lambda),q\lambda)=O(q^{\kappa-1/2+\eps})$.
Employing \eqref{eq:int2}, we obtain
\[
\sum_{d\mid d_0}d^{1-n}b_s\big(Q(d\lambda_0),d\lambda_0\big)=
- q^{1-n}\sum_{d\mid d_0}d^{1-n}b_s\big(Q(qd\lambda_0),qd\lambda_0\big)= O(q^{3/2-n/2+\eps}).
\]
By our assumption $n>3$, the right hand side goes to zero as $q\to \infty$, and therefore
\[
\sum_{d\mid d_0}d^{1-n}b_s\big(Q(d\lambda_0),d\lambda_0\big)=0.
\]
An inductive argument now shows that $b_s\big(Q(d\lambda_0),d\lambda_0\big)=0$ for all $s\in (\Z/p\Z)^\times$ and
all $d\mid d_0$. This proves the assertion.
%
\begin{comment}
By means of \eqref{eq:int1}, we obtain
\[
\sum_{a\;(p)} e\left(\frac{a(qr+s)}{p}\right)b_s(Q(q\lambda),q\lambda) + q^{n-1}\sum_{a\;(p)} e\left(\frac{a(qr+s)}{p}\right)b_s(Q(\lambda),\lambda)=0.
\]
If $s\in (\Z/p\Z)^\times$ and  $qr\equiv -s\pmod{p}$, we find that
\begin{align}
\label{eq:int2}
b_s(Q(q\lambda),q\lambda) + q^{n-1}b_s(Q(\lambda),\lambda)=0.
\end{align}
By Dirichlet's theorem, there are infinitely many such primes $q$. If $q$ goes to infinity, the the Weil bound for the coefficients of the cups form $g_s$ of weight $\kappa=1+n/2$ implies that
\[
b_s(Q(q\lambda),q\lambda)=O(q^{\kappa-1/2}).
\]
Using \eqref{eq:int2}, we obtain
\[
b_s(Q(\lambda),\lambda)=  -q^{1-n}b_s(Q(q\lambda),q\lambda)= O(q^{3/2-n/2}).
\]
By our assumption $n>3$, the right hand side goes to zero as $q\to \infty$, and therefore
$b_s(Q(\lambda),\lambda)=0$.
\end{comment}
\end{proof}

Next we show that $g$ as in the previous proposition behaves nicely
under the action of $\Aut(A)$.  We first introduce some notation for
the rest of this section. Let $L$ be an even lattice of prime level
$p$ and signature $(n,2)$ with $n\geq 4$.  We also assume that $L$ has
Witt rank 2, which is automatically true if $n>4$.

In view of Lemma \ref{lem:6.1}, possibly replacing $L$ by a sublattice
of level $p$, we may assume that $L=D\oplus M$, where $D$ is a
positive definite even lattice of level $p$ and rank $n-2$, and
$M\cong U(p)\oplus U(p)$. We identify $M$ with the lattice of integral
$2\times 2$ matrices with the quadratic form $Q(X)=p\det(X)$. The
group $\Gamma(1)\times\Gamma(1)$ acts on $M$ by orthogonal
transformations via
$(\gamma_1,\gamma_2).X=\gamma_1 X\gamma_2^{-1}$. The action gives rise to a homomorphism
$\Gamma(1)\times\Gamma(1)\to \Orth(M)^+$ whose kernel is $\{\pm 1\}$.
We write $\lambda\in L$ as $\lambda=\lambda_D+\lambda_M$ with
$\lambda_D\in D$ and $\lambda_M\in M$.  Moreover, we denote the
canonical projection $L'\to A\cong D'/D\oplus M'/M$ by $\lambda\mapsto
\bar \lambda$.  We let $K\subset L$ be the sublattice of those
$\lambda\in L$ for which $\lambda_M$ is a diagonal matrix. Hence
$K\cong D\oplus U(p)$.
\label{splitL}

We say that $\mu\in A$ has {\em normal form} if $\mu_D = 0$ in $D'/D$ and
\[
\mu_M = \begin{cases} \kzxz{0}{0}{0}{0},&\text{if $\mu = 0$,}\\
\kzxz{1/p}{0}{0}{Q(\mu)},&\text{if $\mu \neq 0$.}
\end{cases}
\]
If we apply Witt's theorem for the discriminant form $A$, we see that in every $\Aut(A)$-orbit of $A$ there exists a unique element in normal form. It only depends on the order of $\mu$ in $A$ and on $Q(\mu)\in \Q/\Z$.
Let $r$ be the rank of the $\F_p$-vector space $A$. The splitting $L=D\oplus M$ implies that $r\geq 4$.
%Finally, we let $\chi_A$ be the quadratic character associated to $A$ as in \eqref{eq:defchia}.

\begin{proposition}
\label{prop:normform}
%For every $\lambda\in L'$ there exists a $\lambda_1\in L'$ of the same norm such that $\bar\lambda_1$ has normal form and
Let $g$ be as in Proposition \ref{prop:keyp}.
For every $\lambda\in L'$ there exists a $\gamma\in \Orth(L)^+$ such that $\gamma\bar\lambda$ has normal form and
%\in M'/M$ and such that $\lambda_1\in L'$ of the same norm such that $\bar\lambda_1$ has normal form and
\[
b(Q(\lambda),\lambda)= b(Q(\lambda),\gamma\lambda).
\]
\end{proposition}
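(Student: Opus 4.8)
The plan is to produce $\gamma$ as a product of Eichler transformations to which Proposition \ref{prop:keyp} applies at every step, so that the coefficient $b(Q(\lambda),\bar\lambda)$ is unchanged throughout. By Witt's theorem the normal form is the unique representative of the $\Aut(A)$-orbit of $\bar\lambda$, and since $A$ is an $\F_p$-vector space this orbit is determined by $Q(\lambda)$ alone. Because the coefficient depends only on $\bar\lambda$ and on $Q(\lambda)$, we are also free to replace $\lambda$ at the outset by any convenient representative of its class in the same $\Gamma(L)$-orbit. The basic move is the following: if $u\in L$ is primitive isotropic with $(u,L)=p\Z$ and the current representative $\lambda'$ satisfies $\lambda'\perp u$, then for $v\in L'\cap u^\perp$ Proposition \ref{prop:keyp} gives $b(Q(\lambda),\overline{\lambda'+(\lambda',v)u})=b(Q(\lambda),\bar{\lambda'})$, since $E(u,v)\in\Orth(L)^+$ preserves $Q$. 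As $u\in L$, the class $\overline{\lambda'+(\lambda',v)u}$ differs from $\bar{\lambda'}$ by the multiple $(\lambda',v)\in\tfrac1p\Z/\Z$ of the isotropic class $\overline{\tfrac1p u}\in A$, and as in the proof of Proposition \ref{prop:keyp} every multiple in $\tfrac1p\Z/\Z$ occurs for a suitable $v$ unless $\bar{\lambda'}\in\langle\overline{\tfrac1p u}\rangle$. Thus the coefficient-preserving moves at $\bar{\lambda'}$ are exactly the translations by isotropic classes $\overline{\tfrac1p u}$ orthogonal to $\bar{\lambda'}$; in particular, whenever $\bar\mu-\bar{\lambda'}$ is isotropic, orthogonal to $\bar{\lambda'}$, and admits a primitive lift $u\in L$ with $(u,L)=p\Z$, one passes from $\bar{\lambda'}$ to $\bar\mu$ in a single step.

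I then reduce to normal form in two stages. First I clear the $D$-component of $\bar\lambda=\mu_D+\mu_M$: choosing $\delta\in D'$ with $\bar\delta=-\mu_D$, I form the isotropic vector $w=p\delta+w_M\in L$, where $w_M\in M$ is primitive with $Q(w_M)=p\det(w_M)=-p^2Q(\delta)=-Q(p\delta)$ (possible since $pQ(\delta)\in\Z$ as $D$ has level $p$), so that $Q(w)=0$. Then $w_D=p\delta\in pD'$ forces $(w,L)=p\Z$, and $\overline{\tfrac1p w}$ has $D$-component $-\mu_D$. Using the remaining freedom in $w_M$ and in $\delta$ one arranges $w\perp\lambda$, and the translation by $\overline{\tfrac1p w}$ with multiple $1$ sends $\mu_D$ to $0$ at the cost of modifying $\mu_M$. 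After this stage the class lies in $M'/M$. Second, I reduce the resulting purely-$M$ class to the normal form $\left(\begin{smallmatrix}1/p&0\\0&Q(\mu)\end{smallmatrix}\right)$ by the action of $\SL_2(\Z)\times\SL_2(\Z)$ on $M$ via $(\gamma_1,\gamma_2).X=\gamma_1X\gamma_2^{-1}$, which diagonalizes any matrix while preserving $\det$, hence $Q$. Each elementary generator of this action is an Eichler transformation $E(u,v)$ with $u\in M$ a primitive isotropic matrix and $v\in M'\cap u^\perp$ (for instance, left multiplication by $\left(\begin{smallmatrix}1&1\\0&1\end{smallmatrix}\right)$ is the Eichler transformation along a rank-one matrix), so all of these are of the type covered by the basic move.

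The main obstacle is the simultaneous control, at each step, of the orthogonality $\lambda'\perp u$ and the achievability of the required multiple $(\lambda',v)$. A generator of $\SL_2(\Z)\times\SL_2(\Z)$ in general does not fix the then-current vector, so one cannot simply multiply the generators together; instead the intended orbit move must be factored into translations, each orthogonal to the vector present at that moment, and one must check that the elementary operations can always be realized in this orthogonal fashion. This is precisely where the hypotheses enter: the two rescaled hyperbolic planes in $M$ (the Witt rank being $2$) guarantee that there is always an isotropic direction orthogonal to the current vector available for maneuvering, and the level-$p$ condition guarantees $(u,L)=p\Z$ for the vectors $u$ involved, so that Proposition \ref{prop:keyp} applies at every stage. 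Composing the transformations from both stages yields $\gamma\in\Orth(L)^+$ with $\gamma\bar\lambda$ in normal form and $b(Q(\lambda),\bar\lambda)=b(Q(\lambda),\gamma\bar\lambda)$.
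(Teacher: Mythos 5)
Your overall strategy is the same as the paper's: reduce $\bar\lambda$ to normal form in two stages (first clear the $D$-component, then bring the $M$-component into normal form via the $\SL_2(\Z)\times\SL_2(\Z)$-action and the elementary divisor theorem), with the coefficient identity at each step supplied by Proposition \ref{prop:keyp}. The problem is that at both places where this plan requires real work you substitute an assertion for a proof, and those assertions are essentially the entire content of the proposition. In stage 1, your vector $w$ must simultaneously be isotropic, lie in $L$, be primitive with $(w,L)=p\Z$, have prescribed $D$-component of $\overline{w/p}$, satisfy $w\perp\lambda$ exactly (Proposition \ref{prop:keyp} needs orthogonality in $V$, not merely of classes in $A$), and admit $v\in L'\cap w^\perp$ with $(\lambda,v)\equiv 1/p \pmod{\Z}$; you dispose of all of this with ``one arranges,'' and your surjectivity claim for the multiples (``every multiple occurs unless $\bar\lambda\in\langle\overline{\tfrac{1}{p}w}\rangle$'') is itself unproved and its exceptional case is never handled. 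In stage 2, your observation that each generator of $\SL_2(\Z)\times\SL_2(\Z)$ is globally an Eichler transformation $E(u,v)$ is true but does not help: Proposition \ref{prop:keyp} applies only when the current vector lies in $L'\cap u^\perp$, and the fixed $u$ attached to a generator is not orthogonal to a general $\lambda$ (e.g.\ left multiplication by $\kzxz{1}{1}{0}{1}$ equals $E(u,v)$ with $u=\kzxz{0}{1}{0}{0}$, and then $(\lambda,u)=-c\neq 0$ in general, where $p\lambda_M=\kzxz{a}{b}{c}{d}$). You correctly identify this as the main obstacle, but your resolution---that the two rescaled hyperbolic planes ``guarantee that there is always an isotropic direction orthogonal to the current vector available for maneuvering''---is precisely the statement that needs proof, and you never prove it.

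The paper closes both gaps by building the Eichler data out of $\lambda$ itself, so that orthogonality holds by construction rather than by an existence claim. In stage 2 it takes $u=\kzxz{c}{d}{0}{0}$, formed from the bottom row of $p\lambda_M$; then $(\lambda,u)=0$ automatically, $\gamma_1.\lambda_M=\lambda_M+\tfrac{1}{p}u$, and choosing $v=\tfrac{1}{p}\kzxz{\alpha}{\beta}{0}{0}$ with $\alpha d-\beta c\equiv 1\pmod{p}$ gives $E(u,v)\lambda\equiv\gamma_1.\lambda\pmod{L}$: the generator's effect on the class is realized by an admissible Eichler move even though the generator itself is not admissible relative to $\lambda$. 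In stage 1, after diagonalizing $\lambda_M=\tfrac{1}{p}\kzxz{a}{0}{0}{d}$, it takes $u=pa\lambda_D-\kzxz{0}{1}{pa^2Q(\lambda_D)}{2pQ(\lambda_D)}$ and $v=v_D-\kzxz{0}{0}{a(\lambda_D,v_D)}{0}$ with $a(\lambda_D,v_D)\equiv -1/p\pmod{\Z}$; again $(u,\lambda)=0$ by construction, and $E(u,v)$ kills the $D$-component (with a separate short argument when $\bar\lambda_M=0$). Some construction of this kind is indispensable; as written, your proposal is a correct plan together with an accurate description of its hardest step, not a proof.
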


\begin{proof}
%\texttt{Put in! Use Proposition \ref{prop:keyp}.}
1. We first show that there exists a $\gamma\in \Orth(L)^+$ such that $\gamma\bar\lambda\in M'/M$ and such that
\[
b(Q(\lambda),\lambda)= b(Q(\lambda),\gamma\lambda).
\]
%In fact, if $\bar\lambda_D\neq 0$, then
%
%We may assume that $\bar\lambda_D\neq 0$.
%If $\bar\lambda_D=0$ in $D'/D$ we can take $\gamma=1$.

1.1. We begin by looking at the case that  $\bar\lambda_D\neq 0$ and $\bar\lambda_M\neq 0$.
By the elementary divisor theorem for $\Gamma(1)$, we may choose a basis of $M$ such that the coordinate vector of $\lambda_M$ has the form
\[
\lambda_M=\frac{1}{p}\zxz{a}{0}{0}{d}
\]
with $a\in \Z_{>0}$ and $d\in \Z$ divisible by $a$.

We choose a $v_D\in D'$ such that $(\lambda_D,v_D)a\equiv -\frac{1}{p}\pmod{\Z}$. This is possible, since $\bar\lambda_D\neq 0$ and $(a,p)=1$. Then we define vectors
\begin{align*}
u&=pa\lambda_D -\zxz{0}{1}{pa^2Q(\lambda_D)}{2pQ(\lambda_D)}\in L,\\
v&=v_D-\zxz{0}{0}{a(\lambda_D,v_D)}{0}\in L'.
\end{align*}
It is easily checked that $u\in L$ is primitive isotropic and $(u,\lambda)=(u,v)=0$.
The Eichler element $E=E(u,v)$ belongs to $\Orth(L)^+$, and we have
\begin{align*}
E\lambda&= \lambda+(\lambda,v)u\\
&=\lambda+(\lambda_D,v_D)u\\
%=\lambda+(\lambda_D,v_D)pa\lambda_D.
&\equiv\lambda_M+(\lambda_D,v_D)u_M \pmod{L}.
\end{align*}
Now the claim follows from Proposition \ref{prop:keyp}.

1.2. In the case that  $\bar\lambda_D\neq 0$ and $\bar\lambda_M= 0$ we put $u=\kzxz{0}{1}{0}{0}$ and
choose a $v\in D'$ such that $(\lambda_D,v)\notin\Z$. Then
$\lambda_1:=E(u,v)\lambda= \lambda+(\lambda_D,v)u$ has the property that
$\bar\lambda_D\neq 0$ and $\bar\lambda_M\neq 0$. By Proposition \ref{prop:keyp}, we have
$b(Q(\lambda),\lambda)=b(Q(\lambda_1),\lambda_1)$. Now we can argue as in case 1.1.

2. We may now assume that $\bar\lambda_D =0$. We show that there is a $\gamma\in \Orth(M)^+\subset \Orth(L)^+$ as requested. By the elementary divisor theorem there exists a $\gamma\in(\Gamma(1)\times\Gamma(1))/\{\pm 1\}\subset \Orth(M)^+$ such that
$\gamma . \bar \lambda$ has normal form.
Hence it suffices to show that for all $\gamma\in \Gamma(1)\times\Gamma(1)$ we have
\[
b(Q(\lambda),\lambda)= b(Q(\lambda),\gamma.\lambda).
\]
It suffices to prove this for the generators
$(\kzxz{1}{1}{0}{1},1)$,  $(\kzxz{1}{0}{1}{1},1)$, $(1,\kzxz{1}{1}{0}{1})$, $(1,\kzxz{1}{0}{1}{1})$. We illustrate the argument with the first generator $\gamma_1$, for the others it is analogous.
If we write $\lambda_M=\frac{1}{p}\kabcd$ and $u=\kzxz{c}{d}{0}{0}$, we have
\[
%(\zxz{1}{1}{0}{1},1)
\gamma_1.\lambda_M = \frac{1}{p}\zxz{a+c}{b+d}{c}{d}= \lambda_M+\frac{1}{p}u.
\]
If $p\mid (c,d)$, then there is nothing to prove. If $p\nmid (c,d)$ we choose $\alpha,\beta\in \Z$ such that
$\alpha d-\beta c\equiv 1\pmod{p}$ and put
\[
v=\frac{1}{p} \zxz{\alpha}{\beta}{0}{0}.
\]
Then $(\lambda,u)=(v,u)=0$ and $E(u,v)\lambda = \lambda + (\lambda,v)u= \lambda+u/p$.
Again, the claim follows from Proposition \ref{prop:keyp}.
\end{proof}

\begin{remark}
\label{rem:inv0}
The argument of Proposition \ref{prop:normform} also shows that $\Orth(L)^+$ acts transitively on the $\Aut(A)$-orbits of $A$.
%for every $\mu\in A$ there exists a $\gamma\in \Orth(L)^+$ such that $\gamma \mu$ has normal form.
\end{remark}

\begin{corollary}
\label{cor:inv0}
Let $g\in S_{\kappa,L}$ with Fourier coefficients $b(m,\mu)$, and assume $\Lambda(g)=0$.
% be an element in the kernel of $\Lambda$.

i) For every $\lambda\in L'$ and for every $\gamma\in \Aut(L'/L)$ we have
$
b(Q(\lambda),\lambda)= b(Q(\lambda),\gamma\lambda)$.

ii) If $g$ is actually contained in $S_{\kappa,L}^+$,
% and $\Lambda(g)=0$,
then $g$ is invariant under $\Aut(L'/L)$.
\end{corollary}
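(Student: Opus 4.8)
The plan is to treat the two parts separately, deriving (i) from the normal-form reduction already in hand and (ii) from (i) together with the defining property of $S_{\kappa,L}^+$.

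For part (i), fix $\lambda\in L'$ and put $m=Q(\lambda)$. Proposition~\ref{prop:normform} produces $\gamma_1\in\Orth(L)^+$ with $\gamma_1\bar\lambda$ in normal form and $b(m,\lambda)=b(m,\gamma_1\lambda)$. Since any $\gamma\in\Aut(L'/L)$ preserves both the order of an element and its value under $Q$, the classes $\bar\lambda$ and $\gamma\bar\lambda$ lie in a single $\Aut(L'/L)$-orbit; by Remark~\ref{rem:inv0} this is also an $\Orth(L)^+$-orbit, so I can pick $\sigma\in\Orth(L)^+$ with $\overline{\sigma\lambda}=\gamma\bar\lambda$, whence $\sigma\lambda\in L'$ satisfies $Q(\sigma\lambda)=m$. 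Applying Proposition~\ref{prop:normform} to $\sigma\lambda$ gives $b(m,\gamma\lambda)=b(m,\sigma\lambda)=b(m,\gamma_2\sigma\lambda)$ with $\gamma_2\overline{\sigma\lambda}$ in normal form. Because the normal form of an orbit is unique (Witt's theorem), $\gamma_1\bar\lambda=\gamma_2\overline{\sigma\lambda}$, and comparing the two chains yields $b(m,\lambda)=b(m,\gamma\lambda)$.

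For part (ii) the key observation is that (i) controls only those coefficients $b(m,\mu)$ for which the special divisor $Z(m,\mu)$ is nonzero. Indeed $Z(m,\mu)$ is a sum of effective divisors $Z(\lambda)$, so $Z(m,\mu)\neq 0$ precisely when the coset $L+\mu$ represents $m$, i.e.\ when $(m,\mu)$ arises as $(Q(\lambda),\bar\lambda)$ for some $\lambda\in L'$; and this representability is constant along $\Aut(L'/L)$-orbits, again by Remark~\ref{rem:inv0}. Thus (i) already gives $b(m,\mu)=b(m,\gamma\mu)$ whenever $Z(m,\mu)\neq 0$, and it remains to kill the coefficients attached to the \emph{vanishing} special divisors. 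Here I would use that $g\in S_{\kappa,L}^+$ is orthogonal to $\xi(N_{2-\kappa,L^-})$: for a pair $(m,\mu)$ with $Z(m,\mu)=0$, choose a harmonic Maass form $f\in H_{2-\kappa,L^-}$ whose principal part is $q^{-m}\frake_\mu$ (a Maass--Poincar\'e series, which for $\kappa>2$ has cuspidal $\xi$-image). Then $Z(f)=\tfrac12 Z(m,\mu)=0$, so $f\in N_{2-\kappa,L^-}$, and the Bruinier--Funke pairing \cite{BF} evaluates $(\xi(f),g)_{\Pet}$ to a nonzero multiple of $\overline{b(m,\mu)}$ (the multiple absorbing the harmless $\mu\leftrightarrow-\mu$ symmetry of principal parts, for which $Z(m,-\mu)=Z(m,\mu)=0$ as well). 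Orthogonality forces $b(m,\mu)=0$, and the same holds for $\gamma\mu$ since $(m,\gamma\mu)$ is likewise non-representable. Combining the two cases gives $b(m,\mu)=b(m,\gamma\mu)$ for all $m$, $\mu$ and all $\gamma\in\Aut(L'/L)$, i.e.\ $g$ is $\Aut(L'/L)$-invariant.

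I expect the main obstacle to be exactly this dichotomy: Proposition~\ref{prop:normform}, and hence part~(i), says nothing about coefficients sitting over empty special divisors, and it is only the defining relation of $S_{\kappa,L}^+$ that removes them. The one point to verify carefully is that a harmonic Maass form with an arbitrary prescribed singular term and with $\xi$-image a cusp form exists and lies in $N_{2-\kappa,L^-}$; this follows from surjectivity of the principal-part map on $H_{2-\kappa,L^-}$ together with the observation $Z(f)=0$. Note finally that $\Lambda(g)=0$ enters part~(ii) only through part~(i): the vanishing of the non-representable coefficients uses nothing beyond $g\in S_{\kappa,L}^+$.
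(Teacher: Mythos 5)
Your proof is correct and follows essentially the same route as the paper: part (i) via Proposition~\ref{prop:normform}, uniqueness of normal forms, and Remark~\ref{rem:inv0}, and part (ii) by splitting into representable pairs (handled by (i)) and non-representable pairs (killed by pairing $g$ against a harmonic Maass form lying in $N_{2-\kappa,L^-}$). The only cosmetic difference is that the paper takes the symmetrized principal part $\frac{1}{2}q^{-m}(\frake_\mu+\frake_{-\mu})$ for $f_{m,\mu}$ --- precisely the fix you gesture at in your parenthetical, since a principal part $q^{-m}\frake_\mu$ alone is in general incompatible with the action of $Z\in\Mp_2(\Z)$.
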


\begin{proof}
i)
In every $\Aut(A)$-orbit of $A$ there exists a unique element in normal form. Hence the corollary directly follows from Proposition \ref{prop:keyp} and Proposition \ref{prop:normform}.

ii) Let $\mu\in A$ and $m\in \Z+Q(\mu)$ be positive.
If there does not exist any  $\lambda\in \mu+L$ for which $Q(\lambda)=m$, then $Z(m,\mu)=0\in \Div(X_\Gamma)$. Hence the harmonic Maass form $f_{m,\mu}\in H_{2-\kappa,L^-}$ with principal part $\frac{1}{2}q^{-m}( \frake_\mu+\frake_{-\mu})$ belongs to $N_{2-\kappa,L^-}$. Since $g\in  S_{\kappa,L}^+$, we have
\[
b(m,\mu) = \{f_{m,\mu},g\}= (\xi(f_{m,\mu}),g)=0.
\]
Moreover, Remark \ref{rem:inv0} implies that for every $\gamma\in \Aut(A)$ there does not exists any $\lambda\in \gamma\mu+L$ for which $Q(\lambda)=m$. Consequently, we have $b(m, \gamma\mu)=0$ as well.

Combining this with (i) we find that  $g$ is invariant under $\Aut(L'/L)$.
%On the other hand, if there exists a $\lambda\in \mu+L$ for which $Q(\lambda)=m$, then (i) states that $b(m,\mu)= b(m,\gamma\mu)$ for every $\gamma\in \Aut(A)$.
%Therefore $g$ is invariant under $\Aut(L'/L)$.
\end{proof}

%\begin{corollary}
%\label{cor:inv}
%
%ii) There exists a $h\in S_\kappa(p,\chi_A)$ such that $g=\vec h$ as in Proposition \ref{prop:fevector}.
%\end{corollary}

%Note that according to \cite[Corollary 5.5]{Sch}, every element of $S_{\kappa,A}$ which is invariant under $\Aut(A)$, is the lift of a scalar valued form for $\Gamma_0(p)$ similarly as in \eqref{eq:lift}.

%Now we can use \cite{Sch}, to deduce that $g$ is the lift as in \eqref{eq:lift} of some scalar valued form
%$h\in S_\kappa(\Gamma_0(p),\chi_A)$. Employing the Fourier expansion of $\Lambda$ we find some conditions on $h$ that %allow us to prove a converse theorem.

Since $L$ has level $p$, the quotient $L/pL'$ is an $\F_p$-vector space of dimension $n+2-r$. The quadratic form $Q$ on $L$ induces a non-degenerate $\F_p$-valued quadratic form on $L/pL'$.

\begin{lemma}
\label{lem:rep}
If $L/pL'$ represents $0\in \F_p$ non-trivially,
then for any $m\in p\Z$ there exists a $\lambda\in L$ such that $Q(\lambda)=m$ and $\lambda/p\notin L'$.
Moreover, such a $\lambda$ can be chosen primitively in $L'$.
\end{lemma}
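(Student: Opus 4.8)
The plan is to use the splitting $L=D\oplus M$ with $M\cong U(p)\oplus U(p)$ that is fixed throughout this section, where $D$ is positive definite of level $p$ and rank $n-2$ and $M$ is identified with $\Mat_2(\Z)$ carrying $Q(X)=p\det(X)$. I would reduce the statement to the isotropy hypothesis on the positive definite part $D$ together with an explicit choice in $M$.

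First I would determine $L/pL'$. Since $M'=\tfrac1p M$ we have $pM'=M$, hence $pL'=pD'\oplus M$, and there is a canonical isomorphism $L/pL'\cong D/pD'$ under which the induced $\F_p$-valued quadratic form is the reduction of $Q|_D$ modulo $p$. In particular, for $\lambda=\delta+x\in D\oplus M$ the class of $\lambda$ in $L/pL'$ depends only on $\delta$, and $Q(\lambda)\equiv Q(\delta)\pmod p$ because $Q(x)=p\det(x)\in p\Z$. Under this identification, the hypothesis that $L/pL'$ represents $0$ non-trivially furnishes a vector $\delta\in D$ with $\delta\notin pD'$ and $Q(\delta)\in p\Z$.

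Next, given $m\in p\Z$, I would set $m'=m-Q(\delta)\in p\Z$ and write $m'=pk$ with $k\in\Z$. Taking $x=\kzxz{1}{0}{0}{k}\in M$, we have $Q(x)=pk=m'$ and $x$ is primitive in $M$ since its entries have greatest common divisor $1$. Then $\lambda:=\delta+x\in L$ satisfies $Q(\lambda)=Q(\delta)+Q(x)=m$, and its class in $L/pL'$ equals that of $\delta$, which is nonzero; equivalently $\lambda/p\notin L'$. This already proves the first assertion.

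The one point requiring care, and the main (if mild) obstacle, is the primitivity of $\lambda$ in $L'=D'\oplus M'$. A vector is imprimitive exactly when $\lambda/q\in L'$ for some prime $q$, i.e.\ when $\delta/q\in D'$ and $x/q\in M'$ simultaneously. For $q=p$ this fails in the $D$-component, since $\delta\notin pD'$ gives $\delta/p\notin D'$. The delicate case is $q\neq p$: because $M\subset pL'$, the hyperbolic part contributes nothing modulo $p$ and is automatically $p$-divisible in $L'$, so $p$-indivisibility must come from $\delta$, whereas indivisibility by the remaining primes must come from $M$. Here I would use $M'=\tfrac1p M$ together with $\gcd(p,q)=1$ to see that $x/q\in M'$ is equivalent to $q$ dividing the content of $x$ in $M$; since $x$ is primitive in $M$, this never occurs for $q\neq p$. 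Hence $\lambda/q\notin L'$ for every prime $q$, so $\lambda$ is primitive in $L'$, which completes the argument.
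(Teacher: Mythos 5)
Your proof is correct, and it follows the same basic strategy as the paper --- both arguments exploit the splitting $L=D\oplus M$ with $M\cong U(p)\oplus U(p)$ realized as $\Mat_2(\Z)$ with $Q(X)=p\det(X)$, and both rest on the two facts that $M=pM'\subset pL'$ (so the $M$-component never obstructs the condition $\lambda/p\notin L'$) and that the $M$-component can be made primitive of any prescribed norm in $p\Z$. The difference is in how the hypothesis is brought into play: the paper starts from a witness $\lambda_0\in L$ with $Q(\lambda_0)\in p\Z$ and $\lambda_0/p\notin L'$, uses the elementary divisor theorem via the $\Gamma(1)\times\Gamma(1)$-action to put its $M$-component into diagonal form $\kzxz{a}{0}{0}{b}$, and then perturbs to $\kzxz{a}{1}{t}{b}$ to sweep out all of $p\Z$ while forcing content $1$; you instead prove the structural identification $pL'=pD'\oplus M$, hence $L/pL'\cong D/pD'$ as $\F_p$-quadratic spaces, so that the hypothesis hands you $\delta\in D\setminus pD'$ with $Q(\delta)\in p\Z$ directly, after which the $M$-component $\kzxz{1}{0}{0}{k}$ is constructed from scratch. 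Your route avoids the elementary divisor/normal-form step and makes transparent that the hypothesis is genuinely a condition on the definite part $D$; the paper's version is marginally shorter because it reuses the $\Orth(L)^+$-action machinery already in place in that section and does not need to verify the isomorphism of quadratic spaces. Your treatment of primitivity (checking $q=p$ via $\delta\notin pD'$ and $q\neq p$ via the content of the $M$-component) is exactly the point the paper leaves implicit, so spelling it out is a welcome addition.
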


\begin{proof}
The hypothesis implies that there is a
$\lambda_0\in L$ such that $m_0:=Q(\lambda_0)\in p\Z$ and $\lambda_0/p\notin L'$.
We write $L=D\oplus M$ as on page \pageref{splitL}. Acting with $\Gamma(1)\times \Gamma(1)\subset \Orth(L)^+$ we may assume that
\[
\lambda_0=\lambda_{0D}+\zxz{a}{0}{0}{b}
\]
with $\lambda_{0D}\in D$ and $a,b\in \Z$. Then, for $t\in \Z$, the vector
\[
\lambda=\lambda_{0D}+\zxz{a}{1}{t}{b}
\]
in $L$ represents $m_0-pt$. It is primitive in $L'$.
\end{proof}

\begin{theorem}
\label{thm:mainp}
If  $g\in S_{\kappa,L}^+$ and $\Lambda(g)=0$, then $g=0$.
\end{theorem}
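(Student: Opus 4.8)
The plan is to collapse the vanishing of $g$ to that of a single scalar-valued cusp form and then to squeeze that form between the empty Heegner divisors (which drive the $S_{\kappa,L}^+$-relations) and the growth estimate already used in Proposition~\ref{prop:keyp}. First I would invoke Corollary~\ref{cor:inv0}(ii): since $g\in S_{\kappa,L}^+$ lies in $\ker(\Lambda)$, it is invariant under the full group $\Aut(A)$, where $A=L'/L\cong\F_p^r$. By \cite[Corollary 5.5]{Sch} every $\Aut(A)$-invariant form in $S_{\kappa,L}$ is the lift $\vec h$ of a scalar-valued cusp form $h\in S_{\kappa}(\Gamma_0(p),\chi_A)$, and an analogue of the coefficient formula \eqref{eq:liftfor} (computed in general in \cite[Section 6]{Sch:Inv}) expresses the Fourier coefficients $b(m,\mu)$ of $g$ through the coefficients $a(l)$ of $h$ and $\tilde a(l)$ of $h\mid W_p$. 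Thus $g=0$ is equivalent to $a(l)=\tilde a(l)=0$ for all $l$, i.e.\ to $b(m,\mu)=0$ for all $(m,\mu)$; and since $g$ is $\Aut(A)$-invariant, Proposition~\ref{prop:normform} lets me restrict attention to $\mu$ in normal form, so that only $Q(\mu)\in\frac1p\Z/\Z$ and the order of $\mu$ enter.

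Second, I would use the membership $g\in S_{\kappa,L}^+$ to kill the coefficients attached to \emph{empty} Heegner divisors, exactly as in the proof of Corollary~\ref{cor:inv0}(ii): whenever $Z(m,\mu)=0$, the harmonic Maass form $f_{m,\mu}\in H_{2-\kappa,L^-}$ with principal part $\tfrac12 q^{-m}(\frake_\mu+\frake_{-\mu})$ lies in $N_{2-\kappa,L^-}$, and orthogonality forces $b(m,\mu)=0$. For small $m$ the divisor $Z(m,\mu)$ is genuinely empty, so these relations annihilate the low-order coefficients at once. The remaining task is the coefficients with $Z(m,\mu)\neq 0$, the large ones, which the $S^+$-relations do not reach directly.

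Third, which is the heart of the argument, I would feed the primitive vectors produced by Lemma~\ref{lem:rep} into the Fourier expansion of $\Lambda(g)$ from Theorem~\ref{thm:lift}(i), taken with respect to a primitive isotropic $u\in L$ with $(u,L)=p\Z$ (available from the splitting $L=D\oplus M$, $M\cong U(p)\oplus U(p)$, with $K=(L\cap u^\perp)/\Z u$). The point of choosing $\lambda$ primitive in $K'$ is that the divisor sum $\sum_{d\mid\lambda}d^{\,n-1}$ in Theorem~\ref{thm:lift}(i) collapses to its single term $d=1$, turning the relation $\Lambda(g)=0$ into a clean linear equation in $b(Q(\lambda),\lambda)$ and its $\Aut(A)$-conjugates rather than a convolution over divisors. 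Running $\lambda$ over the norms $m\in p\Z$ supplied by Lemma~\ref{lem:rep}, together with the normal-form reduction of Proposition~\ref{prop:normform} and the isotypical decomposition already exploited in Proposition~\ref{prop:keyp}, should propagate the vanishing of the seed coefficients to all $b(m,\mu)$, with the Weil bound controlling the resulting recursions and eliminating the large-$m$ contributions. Iterating over $m$ then yields $h=0$, hence $g=0$.

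The main obstacle is conceptual rather than computational: the map $\Lambda$ on all of $S_{\kappa,L}$ has a genuine kernel, since by Remark~\ref{rem:ker} every newform $h$ with $h\mid W_p=\pm h$ satisfies $h\mid U_p=\mp p^{\kappa/2-1}h$, whence Proposition~\ref{prop:ker} puts $\vec h$ in $\ker(\Lambda)$. The whole force of the theorem is therefore that such kernel elements fail to lie in $S_{\kappa,L}^+$ unless they vanish, so the proof must use the $S^+$-constraints in an essential way and cannot reduce to pure injectivity of $\Lambda$. Making this quantitative requires enough empty Heegner divisors to seed the argument in \emph{every} rank $r$ with $4\le r\le n+2$; here the Witt-rank-$2$ hypothesis is precisely what guarantees the splitting off of $U(p)\oplus U(p)$, and hence both the isotropic vector $u$ and, via Lemma~\ref{lem:rep}, the supply of primitive representatives. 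I expect the boundary case $r=n+2$ (where $L/pL'$ is trivial and Lemma~\ref{lem:rep} does not apply directly) to be the most delicate point, calling for a separate discussion of when $L/pL'$ represents $0$ and thus forcing the case distinction on the parity and isotropy of the complementary $\F_p$-form.
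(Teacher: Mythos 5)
Your skeleton --- $\Aut(A)$-invariance via Corollary~\ref{cor:inv0}, relations coming from membership in $S_{\kappa,L}^+$, the Fourier expansion of Theorem~\ref{thm:lift}~(i) at well-chosen vectors, the Weil bound, and a case distinction according to whether $L/pL'$ represents $0$ --- matches the paper's, but at the point you yourself identify as the most delicate, the proposal has a genuine gap. You assert that the $S^+$-relations only reach coefficients attached to \emph{empty} divisors (``the remaining task is the coefficients with $Z(m,\mu)\neq 0$, \ldots which the $S^+$-relations do not reach directly''). That is false, and the anisotropic case hinges on exactly this point: $N_{2-\kappa,L^-}$ is defined by $Z(f)=0\in\Div(X_\Gamma)_\C$, so it also contains harmonic Maass forms whose principal part encodes a non-trivial \emph{linear relation among non-empty divisors}. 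When $L/pL'$ does not represent $0$ non-trivially, every $\lambda\in L$ with $p\mid Q(\lambda)$ lies in $pL'$, whence $Z(p^2m,0)=\sum_{\mu\in A}Z(m,\mu)$ in $\Div(X_\Gamma)$, and $g\in S_{\kappa,L}^+$ then forces the coefficient relation \eqref{thm:mt3}. This relation --- not Lemma~\ref{lem:rep}, which is unavailable in this case by hypothesis, as you note --- is what seeds the argument: combined with the vanishing of the $p\lambda$-th Fourier coefficient of $\Lambda(g)$ it yields the recursion $B(p^{2r}m)=C_m(r)B(m)$ with integers $C_m(r)\geq p^{rn}$, while the Weil bound only permits $B(p^{2r}m)=O_\eps\bigl(p^{2r(\kappa/2-1/4+\eps)}\bigr)$, and since $\kappa-1/2=n/2+1/2<n$ this forces $B(m)=0$. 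Your proposal flags this case as ``calling for a separate discussion'' but supplies no argument; so the theorem remains unproved precisely for lattices such as $L=I\!I_{n,2}(p)$, where $\Lambda$ itself has a kernel (Proposition~\ref{prop:ker}) and the statement has its real content.

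A second, related defect: your insistence on choosing $\lambda$ primitive in $K'$ so that the divisor sum ``collapses to its single term $d=1$'' discards the term that drives both cases. The paper evaluates the Fourier coefficient of $\Lambda(g)$ at the \emph{imprimitive} vector $p\lambda$, so that the sum has the two terms $d=1$ and $d=p$ and gives $0=b(Q(p\lambda),0)-b(Q(p\lambda),\ell/p)+p^{n}b(Q(\lambda),\lambda)$, cf.~\eqref{thm:mt2} and \eqref{thm:mt6}. The $p^{n}$-amplified term is what annihilates $b(Q(\lambda),\lambda)$ in the isotropic case (after the primitive-vector identity \eqref{thm:mt1}, $b(m,0)=b(m,\ell/p)$ for $m\in p\Z$) and what creates the geometric growth in the anisotropic case; primitive vectors alone never couple the $\mu\neq 0$ coefficients to the $\mu=0$ data. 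Note also that in the isotropic case the outcome is only that $g$ is supported on its $0$-th component, and one still needs Proposition~\ref{prop:3} to conclude $g=0$ --- a step absent from your plan. Finally, the reduction to a scalar-valued form via \cite[Corollary~5.5]{Sch} is plausible but an unnecessary detour: Corollary~\ref{cor:inv0} together with Witt's theorem already implies that $b(m,\mu)$ for $\mu\neq 0$ depends only on $m$, which is all the paper's argument uses.
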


\begin{proof}
We first consider the case in which $L/pL'$ represents $0\in \F_p$ non-trivially.
%in which there exists a $\lambda_0\in L$ such that $p\mid Q(\lambda_0)$ and $\lambda_0/p\notin L'$.
Then, according to Lemma \ref{lem:rep}, for any $m\in p\Z$ there exists a $\lambda\in L$ which is primitive in $L'$ such that $Q(\lambda)=m$.
We note that for such $m$ we have
\begin{align}
\label{thm:mt1}
b(m,0)=b(m,\ell/p).
\end{align}
In fact, using Corollary \ref{cor:inv0} and the action of $\Gamma(1)\times \Gamma(1)\subset \Orth(L)^+$, we may assume that $\lambda$ is actually contained in $K$ and primitive in $K'$.
Now the claim follows from the vanishing of the $\lambda$-th coefficient of $\Lambda(g)$ and the formula for the Fourier expansion  given in Theorem~\ref{thm:lift}.

Next we deduce that for any $m\in \frac{1}{p}\Z$ and any $\mu\in A\setminus\{0\}$ the Fourier coefficient $b(m,\mu)$ of $g$ vanishes. In fact, if there is no $\lambda\in L'$ such that $Q(\lambda)=m$ and $\bar\lambda =\mu$, then $Z(m,\mu)=0\in \Div(X_\Gamma)$. Since $g\in S_{\kappa,L}^+$, this implies $b(m,\mu)=0$.
On the other hand, if there exists a $\lambda\in L'$ such that $Q(\lambda)=m$ and $\bar\lambda =\mu$, then $\mu\neq 0$ implies that $\lambda/p\notin L'$. As in the proof of Lemma \ref{lem:rep}, using the action of $\Gamma(1)\times \Gamma(1)\subset \Orth(L)^+$, we may assume that $\lambda$ is primitive in $K'$.
%Moreover, using the action of $\Gamma(1)\times \Gamma(1)\subset \Orth(L)^+$, we may assume that $\lambda\in K'$ primitive.
Employing the Fourier expansion of $\Lambda(g)$ given in Theorem \ref{thm:lift}, and the vanishing of
the $p\lambda$-th coefficient, we find that
\begin{align}
\label{thm:mt2}
0=b(Q(p\lambda),0)-b(Q(p\lambda),\ell/p) + p^n b(Q(\lambda),\lambda).
\end{align}
Combining this with \eqref{thm:mt1}, we obtain that $b(Q(\lambda),\lambda)=b(m,\mu)$ vanishes.

Hence $g$ is supported on its $0$-th component. This is not possible by Proposition \ref{prop:3}.

Now we consider the case in which $L/pL'$ does not represent $0$ non-trivially.
%there exists no $\lambda_0\in L$ such that $p\mid Q(\lambda_0)$ and $\lambda_0/p\notin L'$.
Then, according to Lemma \ref{lem:rep}, for any $\lambda \in L$  with $p\mid Q(\lambda)$, we have $\lambda/p\in L'$. This implies that for any $m \in \frac{1}{p}\Z$ we have
\[
Z(p^2m,0)=\sum_{\mu \in A} Z(m,\mu)\in \Div(X_\Gamma).
\]
Since $g\in S_{\kappa,L}^+$, we get for the Fourier coefficients the corresponding relation
\begin{align}
\label{thm:mt3}
b(p^2m,0)=\sum_{\mu \in A} b(m,\mu).
\end{align}

For $m\in \frac{1}{p}\Z$ we put
\[
B(m)=\begin{cases} b(m,\mu),&\text{if there exists a $\mu\in A\setminus\{0\}$ such that $Q(\mu)\equiv m\pmod{\Z}$,}\\
0,&\text{otherwise.}
\end{cases}
\]
Because of Corollary \ref{cor:inv0} this definition is independent of the choice of $\mu$.

Let $m\in \frac{1}{p}\Z$ with $\ord_p(m)\leq 0$. We claim that for every $r\in \Z_{\geq 0}$ there are integers
$C_m(r)\geq p^{rn}$ and $C'_m(r)\geq 0$ such that
\begin{align}
\label{thm:mt4}
B(p^{2r}m)&=C_m(r) B(m),\\
\label{thm:mt5}
b(p^{2r}m,0)&= C'_m(r) B(m).
\end{align}
We prove this claim by induction on $r$.

If $r=0$, then for \eqref{thm:mt4} we have nothing to show.
For \eqref{thm:mt5} we note that if $\ord_p(m)=-1$ then there is no $\lambda \in L$ such that $Q(\lambda)=m$. Hence $Z(m,0)=0\in \Div(X_\Gamma)$ and therefore $b(m,0)=0$. If $\ord_p(m)=0$ and there is no $\lambda \in L$ such that $Q(\lambda)=m$, we find that $b(m,0)=0$ for the same reason. On the other hand, if there is a $\lambda \in L$ such that $Q(\lambda)=m$, then $\lambda/p\notin L'$. Consequently, we may argue as in \eqref{thm:mt1} to see that
$b(m,0)=B(m)$.

If $r>0$, we obtain \eqref{thm:mt5} directly from \eqref{thm:mt3} and the induction assumption.
To obtain \eqref{thm:mt4}, we take a
$\lambda\in K'\setminus K$ which is primitive in $K'$ with the property $Q(\lambda)=p^{2(r-1)}m$.
Such a vector exists, since $K$ contains $U(p)$ as a direct summand.
The formula for the $p\lambda$-th Fourier coefficient of $\Lambda(g)$ implies that
\begin{align}
\label{thm:mt6}
0=b(p^{2r}m,0)-B(p^{2r}m)+p^{n} B(p^{2(r-1)}m).
\end{align}
Using the induction assumption and \eqref{thm:mt5}, we find
\begin{align*}
B(p^{2r}m)=C'_m(r)B(m)+p^{n} C_m(r-1) B(m).
\end{align*}
This proves the claim.

Now we compare \eqref{thm:mt4} with the Weil estimate for the growth of Fourier coefficients of cusp forms of weight $\kappa=1+n/2$. It implies that for any $\eps>0$ we have
\[
B(p^{2r}m)=O_\eps(p^{2r(\kappa/2-1/4+\eps)}),\quad  r\to \infty.
\]
Since $n>1$, the assumption $B(m)\neq 0$ leads to a contradiction.
We conclude that $B(m)=0$. Varying $m$ and employing \eqref{thm:mt4} and \eqref{thm:mt5}, we find that $g=0$.
\end{proof}

\begin{proof}[Proof of Theorem \ref{thm:main2intro}]
The assertion follows from Theorem \ref{lift+} using Theorem \ref{thm:mainp}.
\end{proof}

\section{Applications}

\label{sect:7}

\subsection{Ranks of Picard groups}

\label{sect:7.1}

%[Improve the results of \cite{Br2}.]
Throughout this section we assume that $n\geq 2$ and that $n$ is greater than the Witt rank of $L$.
When $\Gamma$ acts freely on $\D$, we define the Picard group $\Pic(X_\Gamma)$
%of $X_\Gamma$
to be the group of isomorphism classes of algebraic line bundles on $X_\Gamma$. In general, we choose a normal subgroup $\Gamma'\subset \Gamma$ of finite index which acts freely on $\D$ and define $\Pic(X_\Gamma):= \Pic(X_{\Gamma'})^{\Gamma/\Gamma'}$.
%We briefly write $\Pic(X_\Gamma)_\C:=\Pic(X_\Gamma)\otimes_\Z\C$.
This definition is independent of the choice of $\Gamma'$, and our assumption on $n$ implies that $\Pic(X_\Gamma)$ is a finitely generated abelian group.
We write $\Psp(X_\Gamma)$ for the subgroup generated by the special divisors $Z(m,\mu)$.

The computation of the Picard group is a difficult problem. For certain interesting modular varieties of low dimension it was determined using algebraic geometric methods (se e.g. \cite{GeNy}, \cite{FS}, \cite{CFS}), but little is known in general. Here we use our injectivity results for the map $\Lambda^+$ to
 describe $\Psp(X_\Gamma)$ explicitly. In particular we obtain a
 lower bound for the rank of $\Pic(X_\Gamma)$ improving \cite{Br2}.

According to \cite[Theorem 5.9]{Br1}, we have a commutative diagram
\begin{align}
\label{eq:diag1}
\xymatrix{
S_{\kappa,L} \ar[r]\ar[dr]_\Lambda & \Psp(X_\Gamma)\otimes_\Z\C \ar[d]\\
    & \calH^{1,1}(X_\Gamma).
}
\end{align}
Here the horizontal map is defined by taking $g\in S_{\kappa,L}$ to the line bundle corresponding to
$Z(f)$, where $f\in H_{1-n/2,L^-}$ with vanishing constant term $c^+(0,0)$ such that $\xi(f)=g$.
The vertical map is given by mapping the line bundle corresponding to
$Z(f)$ to $\Lambda(g,z)$.

\begin{corollary}
\label{cor:rankest}
If $\Lambda^+: S_{\kappa,L}^+\to \calH^{1,1}(X_\Gamma)$ is injective, then
\[
\rank(\Psp(X_\Gamma))= 1+ \dim(S_{\kappa,L}^+).
\]
\end{corollary}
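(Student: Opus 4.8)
The plan is to read the rank off the commutative diagram \eqref{eq:diag1}. Write $\phi\colon S_{\kappa,L}\to \Psp(X_\Gamma)\otimes_\Z\C$ for its horizontal arrow, which sends $g$ to the class of $Z(f)$ for a lift $f\in H_{2-\kappa,L^-}$ of $g$ with vanishing constant term, and $\beta$ for its vertical arrow, which sends the class of $Z(f)$ to $\Lambda(z,\xi(f))$; thus $\Lambda=\beta\circ\phi$. Since $\Psp(X_\Gamma)$ is a subgroup of the finitely generated group $\Pic(X_\Gamma)$, we have $\rank(\Psp(X_\Gamma))=\dim_\C\bigl(\Psp(X_\Gamma)\otimes_\Z\C\bigr)$, and I abbreviate $\calP:=\Psp(X_\Gamma)\otimes_\Z\C$. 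Let $\mathcal L\in\calP$ denote the class of the line bundle of modular forms of weight one; recall that $\calP$ is spanned by the classes $[Z(m,\mu)]$.

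First I would compute the image of $\phi$. For $h\in N_{2-\kappa,L^-}$ one has $Z(h)=0$ and $c^+(0,0)=0$, so $\phi(\xi(h))=[Z(h)]=0$; hence $\phi$ kills $\xi(N_{2-\kappa,L^-})$ and, as $S_{\kappa,L}=S_{\kappa,L}^+\oplus\xi(N_{2-\kappa,L^-})$, factors through $S_{\kappa,L}^+$ with $\operatorname{im}(\phi)=\phi(S_{\kappa,L}^+)$. If $\Lambda^+$ is injective, then since $\Lambda^+=\beta\circ(\phi|_{S_{\kappa,L}^+})$ the map $\phi|_{S_{\kappa,L}^+}$ is injective, so $\dim_\C\operatorname{im}(\phi)=\dim(S_{\kappa,L}^+)$; moreover $\beta$ is injective on $\operatorname{im}(\phi)$.

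Next I would show $\calP=\operatorname{im}(\phi)\oplus\C\mathcal L$. Fix a Borcherds input $f_1\in M^!_{2-\kappa,L^-}$ of non-zero weight, so that its constant term $c_1^+(0,0)\neq 0$ and $[Z(f_1)]=\tfrac12 c_1^+(0,0)\,\mathcal L$; such $f_1$ exist. For an arbitrary $f\in H_{2-\kappa,L^-}$ with $g=\xi(f)$ and constant term $c=c^+(0,0)$, the form $f-\tfrac{c}{c_1^+(0,0)}f_1$ is a lift of $g$ with vanishing constant term, whence $[Z(f)]=\phi(g)+\tfrac{c}{2}\mathcal L$. Applying this to the harmonic Maass forms $f_{m,\mu}\in H_{2-\kappa,L^-}$ with principal part $\tfrac12 q^{-m}(\frake_\mu+\frake_{-\mu})$, for which $Z(f_{m,\mu})=\tfrac12 Z(m,\mu)$, shows $[Z(m,\mu)]\in\operatorname{im}(\phi)+\C\mathcal L$ for all $m,\mu$; as these classes span $\calP$, we obtain $\calP=\operatorname{im}(\phi)+\C\mathcal L$. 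The sum is direct because $\mathcal L\notin\operatorname{im}(\phi)$: evaluating the vertical map on the representative $[Z(f_1)]$ gives $\beta(\mathcal L)=\tfrac{2}{c_1^+(0,0)}\,\Lambda(z,\xi(f_1))=0$, since $\xi(f_1)=0$ (equivalently, by Theorem \ref{thm:lift}(ii) the Chern class of $\mathcal L$ vanishes in $H^2(X_\Gamma,\C)$). As $\beta$ is injective on $\operatorname{im}(\phi)$ and $\mathcal L\neq 0$ in $\calP$ because the Hodge bundle is non-torsion, $\mathcal L$ cannot lie in $\operatorname{im}(\phi)$.

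Putting the pieces together yields the exact sequence $0\to\xi(N_{2-\kappa,L^-})\to S_{\kappa,L}\xrightarrow{\phi}\calP\to\C\to 0$ and hence $\dim_\C\calP=\dim(S_{\kappa,L}^+)+1$, which is the assertion. The only genuinely delicate point is the separation of the Hodge class $\mathcal L$ from the theta-lift image $\operatorname{im}(\phi)$ in the last step: the vanishing $\beta(\mathcal L)=0$ rests on Theorem \ref{thm:lift}(ii), while the non-triviality of $\mathcal L$ in $\calP$ is precisely what makes the rank exceed $\dim(S_{\kappa,L}^+)$ by exactly one. Everything else is a straightforward dimension count along the diagram.
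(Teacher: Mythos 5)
Most of your argument is sound and in fact makes explicit several points the paper leaves implicit: the factorization of the horizontal map $\phi$ through $S_{\kappa,L}^+$, the injectivity of $\phi|_{S_{\kappa,L}^+}$ given that of $\Lambda^+$, and the spanning statement $\Psp(X_\Gamma)\otimes_\Z\C=\operatorname{im}(\phi)+\C\calL$ obtained from the forms $f_{m,\mu}$ and a Borcherds input $f_1$ of non-zero weight are all correct. The gap is exactly at the step you yourself flag as delicate: the vanishing $\beta(\calL)=0$. You justify it by applying the vertical-map formula $[Z(f)]\mapsto\Lambda(z,\xi(f))$ (equivalently, Theorem \ref{thm:lift}(ii)) to $f_1$, whose constant term $c_1^+(0,0)$ is non-zero. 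But both the paper's description of the vertical arrow and Theorem \ref{thm:lift}(ii) are meant only for lifts $f$ with \emph{vanishing} constant term --- that is how $\Lambda$ itself is defined. For a general $f\in H_{2-\kappa,L^-}$ the correct statement (see \cite[Theorem 7.3]{BF}, and consistent with the paper's own remark that $dd^c\Phi(z,f)=c^+(0,0)\Omega$ when $f$ is weakly holomorphic) is that the Chern class of $2Z(f)$ is represented by $\Lambda(z,\xi(f))+c^+(0,0)\Omega$. Taking $f=f_1$ this gives that the harmonic representative of the Chern class of $\calL$ is the K\"ahler form $\Omega\neq 0$, not $0$. Your parenthetical reformulation --- that $c_1(\calL)$ vanishes in $H^2(X_\Gamma,\C)$ --- likewise does not follow from Theorem \ref{thm:lift}(ii) and is false in general: already for signature $(3,2)$, where $X_\Gamma$ is a Siegel modular threefold, the Hodge class is non-trivial in the second cohomology of the open variety.

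One might instead try to rescue the argument by \emph{defining} $\beta$ on all of $\Psp(X_\Gamma)\otimes_\Z\C$ by the formula $[Z(f)]\mapsto\Lambda(z,\xi(f))$ for arbitrary $f\in H_{2-\kappa,L^-}$; then $\beta(\calL)=0$ holds by fiat. But then one must prove that this $\beta$ is well defined, i.e.\ that $[Z(f)]=[Z(f')]$ forces $\Lambda(z,\xi(f))=\Lambda(z,\xi(f'))$, and since $\calL=\frac{2}{c_1^+(0,0)}[Z(f_1)]$, well-definedness at this very class already contains the assertion $\calL\notin\operatorname{im}(\phi)$ that you are trying to prove; the argument becomes circular. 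What actually separates $\calL$ from $\operatorname{im}(\phi)$ --- and this is the paper's proof --- is the \emph{degree}: for $g\in S_{\kappa,L}^+$ the class $\phi(g)$ has degree zero, because $\Lambda(g,z)$ is orthogonal to $\Omega$ so that $\int_{X_\Gamma}\Lambda(g,z)\wedge\Omega^{n-1}=0$ (this is part of the Chern class description in \cite[Chapter 5]{Br1}), whereas the canonical bundle, equivalently $\calL$, has non-vanishing degree proportional to $\int_{X_\Gamma}\Omega^{n}>0$ and lies in $\Psp(X_\Gamma)\otimes_\Z\C$ because there exist Borcherds products of non-zero weight. If you replace your vanishing claim by this degree argument, your proof coincides with the paper's, with your spanning step serving as a useful elaboration of why the degree-zero part of $\Psp(X_\Gamma)\otimes_\Z\C$ is exactly $\operatorname{im}(\phi)$.
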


\begin{proof}
If $\Lambda^+$ is injective, then the diagram \eqref{eq:diag1} implies that the map $S_{\kappa,L}^+\to  \Psp(X_\Gamma)\otimes_\Z\C$ is injective, too. The image of this map is the codimension $1$ subspace generated by the classes of line bundles of degree $0$. On the other hand, the class of the canonical bundle has non-vanishing degree and therefore generates a one-dimensional subspace of $\Pic(X_\Gamma)\otimes_\Z\C$ which is not contained in the image of the map. It is contained in $\Psp(X_\Gamma)\otimes_\Z\C$, since there are always Borcherds products of nonzero weight.
\end{proof}

\begin{comment}
\begin{theorem}
\label{thm:rankest}
%Assume that $L\cong D\oplus U(N_1)\oplus U(N_2)$ for some positive definite lattice $D$ of dimension $n-2>0$ and positive integers $N_1,N_2$. Let $L_0$ be the lattice $L\cong D\oplus U(N_1)\oplus U$ and put $A_0=L_0'/L_0$.
Assume that $L\cong D\oplus U(N)\oplus U$ for some positive definite even lattice $D$ of dimension $n-2>0$ and some positive integer $N$. Then
\[
\rank(\Psp(X_\Gamma))= 1+ \dim(S_{\kappa,L}).
\]
\end{theorem}
\end{comment}

\begin{proof}[Proof of Theorem \ref{thm:rankestintro}]
%[Proof of Theorem \ref{thm:rankest}]
Since $L$ splits a hyperbolic plane over $\Z$, we have $S_{\kappa,L}^+=S_{\kappa,L}$. The map $\Lambda$ is injective by Theorem \ref{thm:main}. Hence, the assertion follows from Corollary \ref{cor:rankest}.
\end{proof}

\begin{remark}
The dimension of $S_{\kappa,L}$ can be explicitly computed by means of the Selberg trace formula or the Riemann-Roch theorem,
% see Section \ref{sect:dimfor}.
see \cite[p.~228]{Bo3}.
Moreover, it can be estimated by means of \cite[Theorem 6]{Br2}.
\end{remark}

\begin{example}
As an example we consider the lattice $L=\Z\oplus U(N)\oplus U$, where $\Z$ is equipped with the even quadratic form $x\mapsto x^2$. Then $L$ has signature $(3,2)$, Gram determinant $4N^2$,  and $L'/L\cong
%the discriminant group is isomorphic to
\Z/4\Z\oplus (\Z/N\Z)^2$.
Let $\Gamma=\Gamma(L)$ be the discriminant kernel subgroup of $\Orth(L)^+$. Using the isomorphism between the Spin group of $L\otimes_\Z\Q$ and the symplectic group $\Symp_2(\Q)$, we may view $\Gamma$ as a congruence subgroup of the Siegel modular group $\Symp_2(\Z)$ of genus two.
%Valery writes: I did not check this carefully but one gets the stable  group as the image of
%\Gamma_1(N)={
%AB
%CD,
%C=0 mod N, det A=det D=1 mod N.
In Table \ref{table:1} we list the ranks of $\Psp(X_\Gamma)$ for $N< 20$.

\begin{table}[h]
\caption{\label{table:1} Ranks of Picard groups}
\begin{tabular}{c|rrrrrrrrrrrrrrrrrrr }
%\hline
\rule[-3mm]{0mm}{8mm}
$N$ & 1& 2 &3& 4& 5& 6& 7& 8& 9& 10& 11& 12& 13& 14& 15& 16& 17& 18& 19\\
\hline %\rule[-3mm]{0mm}{8mm}
\rule[-3mm]{0mm}{8mm}
$\rank(\Psp(X_\Gamma))$ & 1& 1& 1& 1& 3& 2& 4& 3& 7& 9& 11& 7& 19& 16& 19& 17& 33& 28& 37\\
%\rule[-3mm]{0mm}{8mm}
%$\rank(\Psp(X_\Gamma))$ & 0& 0& 0& 0& 2& 1& 3& 2& 6& 8& 10& 6& 18& 15& 18& 16& 32& 27& 36\\
%
% [0, 0, 0, 0, 2, 1, 3, 2, 6, 8, 10, 6, 18, 15, 18, 16, 32, 27, 36]
% \hline
\end{tabular}
\end{table}
\end{example}

\subsection{Other theta lifts and holomorphic differential forms}
\label{sect:7.2}

There are variants of the injectivity results of the present paper for other theta lifts of vector valued elliptic modular forms.
As an example we consider Borcherds' description (and generalization) of the liftings of Maass, Gritsenko, and Doi-Naganuma.

Under our assumption on $n$, Theorem 14.3 of \cite{Bo2} implies that for any integer $k>1$, there is a theta lift
\[
\vartheta_k:
S_{1-n/2+k,L^-}\longrightarrow S_k(\Gamma)
\]
to cusp forms of weight $k$ for the group $\Gamma$. The Fourier expansion of this lift is very similar
to the one of $\Lambda$ given in Theorem \ref{thm:lift}.
A straightforward adaption of the proof of Theorem~\ref{thm:main} yields the following result.

\begin{theorem}
\label{thm:mainvar}
If $L\cong D\oplus U(N)\oplus U$ for some positive definite even lattice $D$ of dimension $n-2>0$ and some positive integer $N$, then $\vartheta_k$ is injective.
\end{theorem}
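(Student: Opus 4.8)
The plan is to follow the proof of Theorem \ref{thm:main} essentially verbatim, replacing the lift $\Lambda$ by $\vartheta_k$ and the target $\calH^{1,1}(X_\Gamma)$ by $S_k(\Gamma)$. Write $A=L'/L$ and let $f=\sum_{\mu\in A}\sum_m b(m,\mu)q^m\frake_\mu\in S_{1-n/2+k,L^-}$ be an element of the kernel of $\vartheta_k$; the goal is to show $f=0$. As with $\Lambda$, the lift $\vartheta_k$ is constructed by integrating $f$ against a theta kernel attached to $L$, and the equivariance of that kernel under $\Orth(L)^+$ yields $\vartheta_k(\gamma.f)(z)=\vartheta_k(f)(\gamma z)$ for $\gamma\in\Orth(L)^+$, in complete analogy with part (iii) of Theorem \ref{thm:lift}. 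In particular the automorphisms $\bar\varphi_r$ of Lemma \ref{lem:act} act on $S_{1-n/2+k,L^-}$ and, via $\varphi_r$, on $S_k(\Gamma)$, and $\vartheta_k$ intertwines these actions. Hence $(\Z/N\Z)^\times$ preserves $\ker(\vartheta_k)$, and by orthogonality of characters I may assume that $f$ lies in a single $\chi$-isotypical component, exactly as in Step~1 of the proof of Theorem \ref{thm:main}.

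The crucial input is the Fourier expansion of $\vartheta_k(f)$, which by \cite[Theorem 14.3]{Bo2} has the same shape as the expansion of $\Lambda(z,g)$ in part (i) of Theorem \ref{thm:lift}. Choosing $\ell,\ell'\in U\subset L$ and writing $K=L\cap V_0\cong D\oplus U(N)$, the $\lambda$-th Fourier coefficient of $\vartheta_k(f)$ is, up to a nonzero normalizing constant, a sum of the form $\sum_{d\mid\lambda} d^{k-1} b(Q(\lambda)/d^2,\lambda/d)$ over the positive integers $d$ with $\lambda/d\in K'$. From the vanishing of $\vartheta_k(f)$ I therefore obtain, for every $\lambda\in K'$ with $Q(\lambda)>0$, the relation $\sum_{d\mid\lambda} d^{k-1} b(Q(\lambda)/d^2,\lambda/d)=0$. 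Running the same induction on the divisibility of $\lambda$ as in Step~2 of Theorem \ref{thm:main} — the term $d=1$ contributes $b(Q(\lambda),\lambda)$ while the remaining terms involve strictly more primitive vectors — I conclude that $b(Q(\lambda),\lambda)=0$ for all $\lambda\in K'$ of positive norm.

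From here the argument is identical to Step~3 of the proof of Theorem \ref{thm:main}. With $e=\frac1N(0,1)\in U(N)'\subset L'$ and the cyclic isotropic subgroup $I_N=\langle e+L\rangle\subset A$, I would show that all components $f_\mu$ vanish by induction on $\Omega(\cont_e(\mu))$. The base case $\cont_e(\mu)=1$ follows by choosing $\lambda\in\mu+K$ with $(\lambda,e)=\frac1N$ and using $Q(\lambda+aNe)=Q(\lambda)+a$ together with the vanishing of $b(Q(\cdot),\cdot)$ on $K'$, the isotypical reduction allowing one to assume $(e,\mu)=\frac1N+\Z$. For the inductive step one applies Theorem \ref{thm:8} to write $f=\sum_{d\mid N,\,\Omega(d)\geq t} f_d\uparrow_{I_d}^A$ with $f_d\in M_{k,A(d)}$, and invokes Remark \ref{rem:9} exactly as before to reduce the $\Omega(\cont_e(\mu))=t$ case to the vanishing of $b(Q(\lambda),\lambda)$ on $K'$. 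This forces $f=0$.

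The step I expect to require the most care is confirming the precise shape of the Fourier expansion of $\vartheta_k$ and its $\Orth(L)^+$-equivariance. The source now has weight $1-n/2+k$ and representation $\rho_{L^-}$ rather than $\rho_L$, and the target is a holomorphic cusp form rather than a harmonic $(1,1)$-form, so one must check that these changes only alter the exponent of $d$ and the overall normalizing constant, without disturbing the two features that drive the argument: that the $d=1$ term is exactly $b(Q(\lambda),\lambda)$, and that the coefficients indexed by $\lambda\in K'$ decouple according to divisibility. Once the relevant formula in \cite{Bo2} is matched against part (i) of Theorem \ref{thm:lift}, every remaining step transfers from Theorem \ref{thm:main} without modification.
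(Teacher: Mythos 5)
Your proposal is correct and follows exactly the route the paper intends: the paper's own ``proof'' consists of the remark that the Fourier expansion of $\vartheta_k$ from \cite[Theorem 14.3]{Bo2} has the same shape as that of $\Lambda$ in Theorem \ref{thm:lift}, so that the proof of Theorem \ref{thm:main} (equivariance and isotypical decomposition under $(\Z/N\Z)^\times$, vanishing of $b(Q(\lambda),\lambda)$ for $\lambda\in K'$ via the divisor-sum relation, then the newform theory of Section \ref{sect:3} with induction on $\Omega(\cont_e(\mu))$) carries over verbatim. You have simply written out that straightforward adaptation, correctly identifying the only point needing care, namely that the changed weight and representation only affect the exponent of $d$ and the normalizing constant in the divisor sum.
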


\begin{corollary}
Let $L$ be as in Theorem \ref{thm:mainvar}, and let $\calH^{n,0}(X_\Gamma)$ be the space of square-integrable holomorphic $n$-forms on $X_\Gamma$. Then we have the lower bound
\[
\dim(\calH^{n,0}(X_\Gamma))\geq \dim(S_{1+n/2,L^-}).
\]
\end{corollary}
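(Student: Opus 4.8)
The plan is to identify the space $\calH^{n,0}(X_\Gamma)$ of square integrable holomorphic top degree forms on the $n$-dimensional variety $X_\Gamma$ with the space $S_n(\Gamma)$ of weight $n$ cusp forms for $\Gamma$, and then to invoke Theorem \ref{thm:mainvar} with $k=n$.

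First I would establish that the canonical bundle of $X_\Gamma$ is isomorphic to the $n$-th tensor power of the line bundle $\calL$ of modular forms of weight $1$, whose first Chern form is the invariant K\"ahler form $\Omega$. Working in the tube domain model $\calH^+$, this amounts to checking that the Jacobian of the fractional linear action of $\gamma\in\Orth(V)(\R)^+$ on the $n$-dimensional domain equals $j(\gamma,z)^{-n}$, so that $f(z)\,dz_1\wedge\cdots\wedge dz_n$ descends to a holomorphic $n$-form on $X_\Gamma$ exactly when $f$ is a modular form of weight $n$ for $\Gamma$. Comparing the natural $L^2$-norm of such a form with the Petersson norm then shows that square integrability is equivalent to vanishing at all cusps, giving a canonical isomorphism $\calH^{n,0}(X_\Gamma)\cong S_n(\Gamma)$. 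Since $n\geq 3$, the boundary of the Baily-Borel compactification has codimension at least two, and after passing to a normal subgroup $\Gamma'\subset\Gamma$ of finite index acting freely on $\D^+$ (as in the definition of $\Pic(X_\Gamma)$) one reduces to the smooth projective setting where this identification is standard.

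Granting this, I would apply Theorem \ref{thm:mainvar} with $k=n$. As $n-2>0$ we have $n>1$, so the theta lift
\[
\vartheta_n:\; S_{1-n/2+n,L^-}=S_{1+n/2,L^-}\longrightarrow S_n(\Gamma)
\]
is defined and injective, whence $\dim(S_n(\Gamma))\geq \dim(S_{1+n/2,L^-})$. Combined with the isomorphism $\calH^{n,0}(X_\Gamma)\cong S_n(\Gamma)$ this yields the asserted bound.

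The one substantive step, and the place I expect the most care to be needed, is the identification $\calH^{n,0}(X_\Gamma)\cong S_n(\Gamma)$: both the transformation law (the canonical bundle being $\calL^{\otimes n}$, equivalently the Jacobian of the action being $j(\gamma,z)^{-n}$) and the matching of the $L^2$-condition with the cusp condition must be verified, with attention to the quotient singularities and the boundary. Once this is in place, the bound is an immediate consequence of the injectivity asserted in Theorem \ref{thm:mainvar}.
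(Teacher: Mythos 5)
Your proposal is correct and follows essentially the same route as the paper: the paper's proof simply cites \cite[Lemma 5.10]{Br1} for the identification $\calH^{n,0}(X_\Gamma)\cong S_n(\Gamma)$ and then invokes the injectivity of $\vartheta_n$ from Theorem \ref{thm:mainvar}. The canonical-bundle/Petersson-norm argument you sketch is precisely the content of that cited lemma, so the only difference is that you outline its proof rather than quoting it.
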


\begin{proof}
According to \cite[Lemma 5.10]{Br1}, the space $\calH^{n,0}(X_\Gamma)$ is isomorphic to $S_n(\Gamma)$. Therefore, the assertion follows from the injectivity of $\vartheta_n$.
\end{proof}

Note that for lattices that do not split a hyperbolic plane over $\Z$, the map $\vartheta_k$ can have a non-trivial kernel. For instance, if $p$ is a prime and $L=I\!I_{n,2}(p)$ as in Section \ref{sect:5.2}, we have the following result:

\begin{proposition}
\label{prop:ker2}
%Let $L=I\!I_{n,2}(p)$ be as above and put
Let $0\neq g\in S_{1-n/2+k}(\Gamma_0(p))$, and assume $g\mid U_p = -p^{k/2-n/4-1/2} g\mid W_p$.
Then the corresponding vector valued form $\vec g\in S_{1-n/2+k,L^-}$
does not vanish and $\vartheta_k(\vec g) =0$.
\end{proposition}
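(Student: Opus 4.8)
The plan is to mimic the proof of Proposition~\ref{prop:ker}, replacing the Chern-class lift $\Lambda$ by the lift $\vartheta_k$. Recall from \eqref{eq:lift} that $\vec g$ is the $\Aut(A)$-invariant vector valued form attached to the scalar form $g$, now taken with respect to $L^-$ and weight $w:=1-n/2+k$, and that by the analogue of \eqref{eq:liftfor} its Fourier coefficients are
\[
b(m,\mu)=\begin{cases} c\,\tilde a(pm), & \mu\neq 0,\\ a(m)+c\,\tilde a(pm), & \mu=0,\end{cases}
\]
for a fixed normalizing power $c=p^{-w/2-n/2}$, where $g=\sum_l a(l)q^l$ and $g\mid W_p=\sum_l\tilde a(l)q^l$. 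The non-vanishing of $\vec g$ is immediate from this: the components with $\mu\neq 0$ recover $\tilde a(pm)$ for all $m$, and the components with $\mu=0$ then recover $a(m)$, so $\vec g=0$ would force $g=0$.

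To establish $\vartheta_k(\vec g)=0$ I would use the Fourier expansion of $\vartheta_k$, which, as noted before Theorem~\ref{thm:mainvar}, is completely analogous to the one for $\Lambda$ in Theorem~\ref{thm:lift}(i), with the Hecke exponent adapted to the weight (namely $k-1$ in place of $n-1$) and the harmonic special function replaced by the holomorphic exponential $e((\lambda,z))$. Fixing a primitive isotropic $\ell\in L$ with $(\ell,L)=p\Z$ and the associated tube-domain model, the $\lambda$-th Fourier coefficient of $\vartheta_k(\vec g)$, for $\lambda\in K'$ with $Q(\lambda)>0$, is a constant multiple of
\[
\sum_{d\mid\lambda} d^{k-1}\sum_{\substack{\delta\in L'/L\\ \delta\mid L\cap\ell^\perp=\lambda/d+K}} e(d(\delta,\ell'))\,b(Q(\lambda)/d^2,\delta).
\]
Since $\vec g$ is $\Aut(A)$-invariant, $b(m',\delta)$ depends only on $m'$ and on whether $\bar\delta=0$ in $A$. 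Hence the inner sum separates the lifts $\delta$ with $\bar\delta=0$, each contributing $a(Q(\lambda)/d^2)+c\,\tilde a(pQ(\lambda)/d^2)$, from those with $\bar\delta\neq0$, each contributing $c\,\tilde a(pQ(\lambda)/d^2)$ weighted by the phase $e(d(\delta,\ell'))$.

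The cancellation is then produced by the hypothesis, which in terms of coefficients reads $a(pl)=-p^{w/2-1}\tilde a(l)$, i.e. $\tilde a(l)=-p^{1-w/2}a(pl)$. Substituting this identity into the displayed sum turns every occurrence of $\tilde a$ into a value of $a$, and the resulting divisor sum over $d\mid\lambda$ telescopes according to the $p$-adic valuation of $d$: the contribution of a divisor $d$ cancels against that of $pd$, exactly as in Proposition~\ref{prop:ker}. With $\vartheta_k(\vec g)=0$ and $\vec g\neq 0$ established, the proposition follows.

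I expect the main obstacle to lie in the bookkeeping of this cancellation: evaluating the character sum $\sum_{\delta} e(d(\delta,\ell'))$ over the lifts of $\lambda/d$, and matching the power of $p$ coming from $c$ in \eqref{eq:liftfor} against the eigenvalue $-p^{k/2-n/4-1/2}=-p^{w/2-1}$, so that the pairing of the divisors $d$ and $pd$ is exact rather than merely approximate. It is precisely this matching that forces the specific exponent appearing in the hypothesis, and carrying it out is the one computation that is not purely formal; everything else is a direct transcription of the argument for $\Lambda$.
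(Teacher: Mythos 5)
Your proposal is correct and is essentially the paper's own (merely sketched) proof: the paper disposes of Proposition~\ref{prop:ker2} by remarking that it follows from the Fourier expansion of $\vartheta_k$ together with \eqref{eq:liftfor}, which is exactly the computation you spell out, including the non-vanishing of $\vec g$ read off from \eqref{eq:liftfor}. The bookkeeping you flag as the main obstacle does close up: with $w=1-n/2+k$ the hypothesis reads $a(pl)=-p^{w/2-1}\tilde a(l)$, the inner sum runs over the $p$ lifts $\delta_0+a\ell/p$ ($a\bmod p$) of $\lambda/d+K$, and the resulting divisor sum then telescopes in the $p$-adic valuation of $d$, giving $\vartheta_k(\vec g)=0$.
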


This can be proved by means of the Fourier
expansion of $\vartheta_k$
%, see \cite[Theorem 14.3]{Bo2},
and
\eqref{eq:liftfor}. In view of Remark~\ref{rem:ker}, there exist many
cusp forms $g$ satisfying the hypothesis of the proposition.

\end{document}